\newcommand{\nc}{\newcommand}
 \nc{\cl}{\centerline}
 \renewcommand{\l}{{\rm len}}
 \nc{\SL}{{\rm SL}}
 \nc{\hatQ}{{\hat Q}}
 \nc{\sgn}{{\rm sgn}}
 \nc{\seee}{\mathbb C}
 \nc{\hatlambda}{{\hat\lambda}}
 \nc{\daggerlambda}{{\lambda^\dagger}}
\nc\diag{{\rm diag}}
\renewcommand{\vert}{{\,|\,}}
\nc{\hatL}{{\hat L}}
\nc{\barE}{{\bar   E}}
\nc{\D}{{\mathcal D}}
\nc{\E}{{\mathcal E}}
\nc{\F}{{\mathcal F}}
\nc{\FF}{{\mathcal F}}
\nc{\I}{{\mathcal I}}
\nc{\even}{{\rm e}}
\nc{\ep}{\epsilon}
\nc{\odd}{{\rm o}}
\nc{\Coker}{{\rm Coker}}
\nc{\olE}{{\overline E}}
\nc{\indBG}{{\rm ind}_B^G\,}
\nc{\indHG}{{\rm ind}_H^G\,}
\nc{\que}{{\mathbb Q}}
\nc{\barlambda}{{\bar\lambda}}
\nc{\barmu}{{\bar\mu}}
\nc{\barnu}{{\bar\nu}}
\nc{\bartau}{{\bar\tau}}
\nc{\barm}{{\bar m}}
\nc{\divind}{{\rm div.ind}}
\nc{\tl}{{\tilde{\lambda}}}
\nc{\dar}{\downarrow}
\nc{\Sym}{{\rm \Sigma}}
\nc{\Symm}{{\rm Sym}}
\newcommand{\q}{\quad}
\newcommand{\nat}{{\mathbb N}}
\renewcommand{\mod}{{\rm mod}}
\newcommand{\iso}{\cong}
\newcommand{\Sp}{{\rm Sp}}
\newcommand{\bs}{\bigskip}
\renewcommand{\vert}{\,|\,}
\renewcommand{\sgn}{{\rm sgn}}
\newcommand{\reg}{{\rm reg}}
\newcommand{\ind}{{\rm ind}}
\renewcommand{\vert}{\,|\,}
\newcommand{\zed}{{\mathbb Z}}
\newcommand{\Ext}{{\rm Ext}}
\newcommand{\End}{{\rm End}}
\newcommand{\Hom}{{\rm Hom}}
\newcommand{\cf}{{\rm cf}}
\newcommand{\soc}{{\rm soc}}
\newcommand{\hd}{{\rm hd}}
\renewcommand{\mod}{{\rm mod}}
\newcommand{\GL}{{\rm GL}}
\newcommand{\res}{{\rm{res\,}}}
\newcommand{\Mull}{{\rm Mull}}
\renewcommand{\mod}{{\rm{mod}}}
\nc{\geom}{{\rm geom}}
\nc{\rep}{{\rm rep}}
\newcommand{\rad}{{\rm rad}}
\newcommand{\core}{{\rm core}}
\newcommand{\cont}{{\rm cont}}
\newcommand{\Res}{{\rm Res}}
\renewcommand{\P}{{\mathcal  P}}
\newtheorem{definition}{Definition}[section]
\newtheorem{proposition}[definition]{Proposition}
\newtheorem{lemma}[definition]{Lemma}
\begin{document}


\centerline{\bf Invariants of Specht modules}

\bigskip

\centerline{Stephen Donkin  and Haralampos Geranios}

\bigskip

{\it Department of Mathematics, University of York, York YO10 5DD}\\

\medskip

{\tt stephen.donkin@york.ac.uk,  haralampos.geranios@york.ac.uk}

\bs

\centerline{10 April  2015}
\bs\bs\bs

\section*{Abstract}

\q In  \cite{H}   Hemmer  conjectures that the module  of fixed points for the symmetric group $\Sigma_m$ of a Specht module for $\Sym_n$ (with $n>m$), over a  field of positive characteristic $p$, has a Specht series, when viewed as a $\Sigma_{n-m}$-module. We provide a counterexample  for each prime $p$. The examples have the same form for $p\geq 5$ and we treat the cases $p=3$ and $p=2$ separately.

\section*{Introduction}

\q  Let $k$ be a field of positive characteristic. For a positive integer $n$  we denote by $\Sym_n$ the symmetric group of degree $n$.  Let  $m,n$ be positive integers with $m<n$. The space $X^{\Sym_m}$ of points fixed by $\Sym_m$ of a module $X$ over $k\Sym_n$ is naturally a module for   $k\Sym_{n-m}$.   It is conjectured in \cite{H}, Conjecture 7.2   (see also \cite{H2}, Conjecture 7.3) that this fixed point module has a Specht filtration. In case $m<p$ this follows by the exactness of the $\Sym_m$ fixed point functor. Here we study 
 the case $m=p$ and provide counterexamples of Hemmer's Conjecture.  The layout of the paper is the following. Section 1 is preliminary: we establish notation and remind the reader of connections between the representation theory of the general linear groups and symmetric groups.  We also outline the strategy for finding invariants which is pursued in what follows.  In Sections $2$ and $3$ we consider the case $p\geq 5$.  The main purpose of Section $2$ is to establish the dimensions of invariants for certain three  part partitions with first part at most $p$.   This is then used in Section $3$ to show that the Specht module corresponding to the partition $(p,p,p)$ is a counterexample. In Section 4 we carry out a similar analysis in the case $p=3$ and show that the Specht module corresponding to the partition $(4,4,4)$ is a counterexample. In Section 4 we carry out a similar analysis in the case $p=2$ and show that the Specht module corresponding to the partition $(4,4)$ is a counterexample.  In Sections $2$ and $3$ our arguments are of a general nature. In Sections $4$ and $5$, as well as general arguments, we found it convenient to use specific results on the nature of Specht modules to be found in (or easily deducible from) James's tables, \cite{James},  Appendix.

\bs\bs\bs\bs


\section{Preliminaries}

\q We recall  certain combinatorics associated to partitions,  the usual set-up for the representation theory of general linear groups and symmetric groups and also some well-known results.  We   then describe some results that will be particularly useful to us. We  conclude this section by describing  the   methodology to be followed  in the rest of the paper.

\subsection{Combinatorics}

\q By a partition we mean an infinite  sequence $\lambda=(\lambda_1,\lambda_2,\ldots)$ of nonnegative integers with $\lambda_1\geq\lambda_2\geq \ldots$ and $\lambda_j=0$ for $j$ sufficiently large.   If $m$ is a positive integer such that $\lambda_j=0$ for $j>m$ we identify $\lambda$ with the finite sequence $(\lambda_1,\ldots,\lambda_m)$.  The length   $\l(\lambda)$ of  a partition $\lambda=(\lambda_1,\lambda_2,\ldots)$ is $0$ if $\lambda=0$ and  is the positive integer $m$ such that  $\lambda_m\neq 0$, $\lambda_{m+1}=0$, if $\lambda\neq 0$. For a partition $\lambda$, we denote  by $\lambda'$ the transpose partition of $\lambda$. We write $\P$ for the set of partitions.  We define the   degree of a partition $\lambda=(\lambda_1,\lambda_2,\ldots)$ by $\deg(\lambda)=\lambda_1+\lambda_2+\cdots$.

\q We set $X(n)=\zed^n$. There is a natural partial order on $X(n)$. For $\lambda=(\lambda_1,\ldots,\lambda_n), \mu=(\mu_1,\ldots,\mu_n)\in X(n)$,  we write $\lambda\leq \mu$ if $\lambda_1+\cdots+\lambda_i\leq \mu_1+\cdots+\mu_i$ for $i=1,2,\ldots,n-1$ and $\lambda_1+\cdots+\lambda_n=\mu_1+\cdots+\mu_n$. We shall use the standard $\zed$-basis   $\ep_1,\ldots,\ep_n$ of   $X(n)$,  so  $\ep_i=(0,\ldots,1,\ldots,0)$ (with $1$ in the $i$th position). We write $\Lambda(n)$ for the set of $n$-tuples of nonnegative integers. 

\q We write $X^+(n)$ for the set of dominant $n$-tuples of integers, i.e., the set of elements $\lambda=(\lambda_1,\ldots,\lambda_n)\in X(n)$ such that $\lambda_1\geq \cdots\geq  \lambda_n$.  We write  $\Lambda^+(n)$ for the set of partitions into at most $n$-parts, i.e.,  $\Lambda^+(n)=X^+(n)\bigcap \Lambda(n)$. We shall sometimes refer to elements of $\Lambda(n)$ as polynomial weights and elements of $\Lambda^+(n)$ as polynomial dominant weights. For a nonnegative integer $r$ we write $\Lambda^+(n,r)$ for the set of partitions of $r$ into at most $n$ parts, i.e., the set of elements of $\Lambda^+(n)$ of degree $r$.

\q We fix a prime number $p$. A partition $\lambda=(\lambda_1,\lambda_2,\ldots)$ is $p$-regular if there is no positive integers $i$ such that $\lambda_i=\lambda_{i+1}=\cdots=\lambda_{i+p-1}>0$. We write $\P_\reg$ for the set of $p$-regular partitions  and $\P_\reg(r)$ for the set of $p$-regular partitions  of degree $r$.

 \q   We write $X_1(n)$ for the set of $p$-restricted partition into at most $n$ parts, i.e., the set of elements $\lambda=(\lambda_1,\ldots,\lambda_n)\in \Lambda^+(n)$ such that $0\leq \lambda_1-\lambda_2,\ldots,\lambda_{n-1}-\lambda_n, \lambda_n<p$. Note that an element $\lambda\in \Lambda^+(n)$ belongs to $X_1(n)$ is and only if the conjugate  $\lambda'$ is $p$-regular.

\subsection{Rational Modules and Polynomial Modules}

\q   Let $k$ be an algebraically closed field of positive characteristic $p$.  A good reference for  the polynomial representation theory of $\GL_n(k)$ is the monograph \cite{EGS}. Further details may also be found in the monograph, \cite{D3}.  (To apply this here one should take $q=1$ in the set-up considered there.)  

\q  If $V,W$ are vector spaces over $k$, we write $V\otimes W$ for the tensor product $V\otimes_k W$. For an affine algebraic group $G$ over $k$ and rational modules $V,W$ we write $\Hom_G(V,W)$ for the space of $G$-module homomorphisms and write $\Ext^i_G(V,W)$ for the extension spaces, $i\geq 0$.  We also denote by $k$ the trivial one dimensional $G$-module. We write $H^0(G,W)$ for the space of fixed points and $H^i(G,W)$ for the cohomology space $\Ext^i_G(k,W)$, $i\geq 0$. 

\q For a finite dimensional rational module $V$ and simple rational $G$-module $L$ we write $[V:L]$ for the multiplicity of $L$ as a composition factor of $V$.

\q If $V$ is a rational $G$-module and $H$ is a closed group of $G$ we write $\Res_H (V)$ for $V$ regarded as an $H$-module by restriction. 

\q We write $G$ for the general linear group $\GL_n(k)$, write $T$ for the maximal torus of $G$ consisting of diagonal matrices,  write  $B$ for the subgroup of $G$ consisting of all invertible lower triangular matrices and write $U$ for the  subgroup of $G$ consisting of all lower unitriangular matrices. Thus  $B$ is a Borel subgroup of $G$ with maximal unipotent subgroup $U$ and $B$ is the semidirect product of $U$ and $T$.  

\q We form the integral group ring $\zed X(n)$. This has $\zed$-basis of formal exponentials $e^\lambda$,  which multiply according to the rule $e^\lambda e^\mu=e^{\lambda+\mu}$, $\lambda,\mu\in X(n)$.   We identify $\lambda=(\lambda_1,\ldots,\lambda_n)\in X(n)$ with the multiplicative character of $T$ taking an element $t$ with diagonal entries $t_1,\ldots,t_n$ to $\lambda(t)=t_1^{\lambda_1}\ldots t_n^{\lambda_n}$ and thereby identify $X(n)$ with the character group of $T$.  We denote by $k_\lambda$ a one dimensional rational $T$-module on which $t\in T$ acts as multiplication by $\lambda(t)$. The modules  $k_\lambda$, $ \lambda\in X(n)$,  form  a complete set of pairwise non-isomorphic rational $T$-modules. For $\lambda\in X(n)$, the action of $T$ extends uniquely to a module action of $B$ on $k_\lambda$ and the modules $k_\lambda$, $\lambda\in X(n)$,  form  a complete set of pairwise non-isomorphic irreducible rational $B$-modules. 

\q For each $\lambda\in X^+(n)$ there is an irreducible rational $G$-module $L(\lambda)$ which has unique highest weight $\lambda$ and such $\lambda$ occurs as a weight with multiplicity one. The modules $L(\lambda)$, $\lambda\in X^+(n)$, form a complete set of pairwise non-isomorphic irreducible rational  $G$-modules. 

\q For $\lambda\in X^+(n)$ we write $\nabla(\lambda)$ for the induced module $\ind_B^Gk_\lambda$.  
Then $\nabla(\lambda)$ is finite dimensional  and its $\chi(\lambda)$ character is the Schur symmetric function corresponding to $\lambda$. The $G$-module socle of $\nabla(\lambda)$ is $L(\lambda)$. The module $\nabla(\lambda)$ has unique highest weight $\lambda$ and this weight occurs with multiplicity one.

\q For  $\lambda\in X^+(n)$ we write  $\Delta(\lambda)$ for the corresponding Weyl module, i.e.,  for the contravariant dual (in the sense of \cite{EGS}) of $\nabla(\lambda)$.

\q A filtration $0=V_0\leq V_0\leq \cdots\leq V_r=V$ for a finite dimensional rational $G$-module $V$ is said to be good if for each $1\leq i\leq r$ the quotient $V_i/V_{i-1}$ is either zero or isomorphic to $\nabla(\lambda^i)$ for some $\lambda^i\in X^+(n)$.  For a rational $G$-module $V$ admitting a good filtration for each $\lambda\in X^+(n)$, the multiplicity $|\{1\leq i\leq r\vert V_i/V_{i-1}\cong \nabla(\lambda)\}|$ is independent of the choice of the good filtration and will be denoted $(V:\nabla(\lambda))$. 

\q For $\lambda,\mu\in X^+(n)$ we have $\Ext^1_G(\nabla(\lambda),\nabla(\mu))=0$ unless $\lambda>\mu$,  see e.g. \cite{D4},  Lemma 3.2.1. It  follows that if $V$ has a good  filtration 
$0=V_0\leq V_0\leq \cdots\leq V_t =V$ with sections $V_i/V_{i-1}\cong \nabla(\lambda_i)$, $1\leq i\leq t$, and $\mu_1,\ldots,\mu_t$ is a reordering of the $\lambda_1,\ldots,\lambda_t$ such that $\mu_i<\mu_j$ implies that $i<j$ then there is a good filtration $0=V_0'<V_1'<\cdots <V_t'=V$ with $V_i'/V_{i-1}'\cong \nabla(\mu_i)$, for $1\leq i\leq t$.

\q We write $k[G]$ for the coordinate algebra of $G$.  For $1\leq i,j\leq m$ let $c_{ij}$ denote the corresponding coordinate function, i.e., the function taking $g\in G$ to its $(i,j)$ entry. We set $A(n)=k[c_{11},\ldots,c_{nn}]$.  Then $A(n)$ is a free polynomial algebra in generators $c_{11},\ldots,c_{nn}$.  Moreover  $A(n)$ has a $k$-space decomposition 
$A(n)=\bigoplus_{r=0}^\infty A(n,r)$, where  $A(n,r)$ is the span of the all monomials  $c_{i_1j_2}c_{i_2j_2}\ldots c_{i_rj_r}$.

\q   Let $V$ be a finite dimensional rational module with basis $v_1,\ldots,v_m$. The coefficient functions $f_{ij}$, $1\leq i,j\leq m$, are defined by the equations
 $$gv_i=\sum_{j=1}^m f_{ji}(g)v_j$$
 for $g\in G$, $1\leq i\leq m$.  The coefficient space $\cf(V)$ is the subspace of $k[G]$ spanned by the coefficient functions 
 $f_{ij}$, $1\leq i,j\leq m$. 
 
 \q A $G$-module $V$ is called polynomial if $\cf(V)\leq A(n)$ and polynomial of degree $r$ if $\cf(V)\leq A(n,r)$.  A polynomial $G$-module $V$ has a unique module decomposition 
$$V=\bigoplus_{r=0}^\infty V(r)$$
where $V(r)$ is polynomial of degree $r$. The coordinate algebra $k[G]$ has a natural Hopf algebra structure and each space $A(n,r)$ is a subcoalgebra. The dual space $S(n,r)$ has a natural algebra structure. The algebras $S(n,r)$ are called Schur algebras. The category of polynomial modules of degree $r$ is naturally equivalent to the category of $S(n,r)$-modules.

 \q The modules $L(\lambda)$, $\lambda\in \Lambda^+(n)$, form a complete set of pairwise non-isomorphic polynomial $G$-modules. For a finite dimensional rational $G$-module $V$ and $\lambda\in X^+(n)$ we write $[V:L(\lambda)]$ for the multiplicity of $L(\lambda)$ as a composition factor of $V$. 
For $\lambda\in \Lambda^+(n)$ we write $I(\lambda)$ for the injective hull of $L(\lambda)$ in the category of polynomial $G$-modules. Then $I(\lambda)$ is finite dimensional, indeed it is may be identified with the injective hull of $L(\lambda)$ as a module for the Schur algebra $S(n,r)$, where $r=|\lambda|$. Moreover, for $\lambda\in  \Lambda^+(n,r)$, the module $I(\lambda)$ has a good filtration and we have the reciprocity formula $(I(\lambda):\nabla(\mu))=[\nabla(\mu):L(\lambda)]$ see e.g., \cite{DStd}, Section 4, (6).

\q  It will be of great practical use  to know that $\Ext^1_{G}(\nabla(\lambda),\nabla(\mu))=0$ when $\lambda$ and $\mu$ belong to different blocks. Here  the relationship with cores of partitions diagrams (discussed later) will be crucial for us. For a partition $\lambda$ we denote by $[\lambda]$ the corresponding partition diagram (as in \cite{EGS}). The $p$-core of $[\lambda]$ is the diagram obtained by removing skew $p$-hooks,  as in \cite{James}. If $\lambda,\mu\in \Lambda^+(n,r)$ and $[\lambda]$ and $[\mu]$ have different $p$-cores then the simple modules $L(\lambda)$ and $L(\mu)$ belong to different blocks , see e.g.  \cite{D2}, Section 1, (7), and it follows in particular that $\Ext^i_{S(n,r)}(\nabla(\lambda),\nabla(\mu))=0$, for all $i\geq 0$.

\q We write $E$ for the natural $G$-module of column vectors. Then $E$ has basis $e_1,\ldots,e_n$, where $e_i$ is the column vector with $1$ in the $i$th position and zeros elsewhere, for $1\leq i\leq n$.  For a partition $\lambda=(\lambda_1,\lambda_2,\ldots)$ we write $S^\lambda E$ for the tensor product of symmetric powers $S^{\lambda_1}E\otimes S^{\lambda_2} E\otimes \cdots$.

\bs

\subsection{Representations of the Symmetric Group}

\q We write $\Sym_r$ form the symmetric group on $\{1,2,\ldots,r\}$, for $r\geq 1$.  A good reference for the representations of the symmetric groups is James's  monograph \cite{James}. 

\q We assume now that $n\geq r$. We write $\mod\,  S(n,r) $ for the category of finite dimensional left $S(n,r)$-modules and $\mod\, \Sym_r$  for the category of finite dimensional $k\Sym_r$-modules.

  By \cite{EGS}, Sections 6.1-6.4 we have the Schur functor  

$$f: \mod \ S(n,r)\rightarrow \mod\, \Sym_r$$

which relates the polynomial representations of $\GL_n(k)$ of degree $r$ with representations  of $\Sym_r$ over $k$  and is given on objects  by 
 $f\,V=V^{\omega_r}$.  For $\lambda$ a partition of degree $r$ we denote  by $\Sp(\lambda)$ the corresponding Specht module and by $M(\lambda)$ the corresponding permutation module for $\Sym_r$. By \cite{EGS}, Sections 6.3-6.4 and  \cite{D1}, 2.1, (20)(i) we have the following results.

\bigskip

{\bf Proposition 1.3.1} \sl The functor $f$ has the following properties :\\

(i)) for  $\lambda\in \Lambda^+(n,r)$,  we have $f S^\lambda E= M(\lambda)$ and   $f\, \nabla(\lambda)=\Sp(\lambda)$;\\
(ii) for  $\lambda\in \Lambda^+(n,r)$ we have  $f\, L(\lambda) \neq 0$ if and only if $\lambda\in X_1(n)$ and the set $\{f\, L(\lambda) \vert  \lambda\in X_1(n)\}$ is a full set of  pairwise non-isomorphic simple $\Sym_r$-modules.

\rm 
\bigskip

\q By \cite{CL} Theorem 3.7 and \cite{D1} Propositions 10.5 and 10.6 we have the following results.

\bigskip

 {\bf Lemma 1.3.2} \sl Let $p\geq 3$. For $\lambda,\mu\in \Lambda^+(n,r)$ we have,\\
(i) $\Ext^i_{S_k(n,r)}(\nabla(\lambda),\nabla(\mu))=\Ext^i_{\Sym_r}(\Sp(\lambda),\Sp(\mu))$ for $0\leq i<p-2$.\\
(ii) If $\Hom_{\Sym_r}(\Sp(\lambda),\Sp(\mu))\neq0$ then $\lambda\geq \mu$.

(iii) $\End_{\Sym_r}(\Sp(\lambda))=k$.

\rm

\bigskip

\q An alternative description of  the simple modules for  the symmetric group $\Sym_r$ can be found in  \cite{James}, Theorem 11.5.  In this description the simple modules for  $\Sym_r$ are parametrized by the set of $p$-regular partitions of $r$. More precisely, for $\lambda$ a  $p$-regular partition of $r$,  the module $\Sp(\lambda)$ has a unique simple quotient  $D^\lambda=\Sp(\lambda)/\rad(\Sp(\lambda))$ and the set $\{D^\lambda \vert \lambda\in \P_\reg(r)\}$ is a full set of pairwise  non-isomorphic simple $\Sym_r$-modules. Moreover for $\mu\in \P_\reg(r)$ with $[\Sp(\lambda):D^\mu]\neq0$ we have that $\mu\geq\lambda$.

\bs

\q These two different descriptions are related by the rule \cite{EGS}, Remark 6.4:

$$f\, L(\lambda) \cong  k_\sgn\otimes  D^{\lambda'}$$

where $k_\sgn$ denotes  the 1-dimensional  module afforded by  the  sign representation of $\Sym_r$.

\q Therefore a direct relation between  the two descriptions of the irreducible modules for the symmetric group is  described in terms of the involution   $\P_\reg(r)\to \P_\reg(r)$, $\lambda\mapsto \tilde\lambda$  defined by 
$k_\sgn\otimes D^\lambda\cong D^{\tilde\lambda}$. This bijection is named after G. Mullineux,  who proposed, in \cite{Mull},  an algorithm to describe it explicitly. We write $\Mull:\P_\reg(r)\to \P_\reg(r)$ for this bijection and call it the Mullineux involution. Thus we have
$$f \, L(\lambda) \iso D^{\Mull(\lambda')}$$
for $\lambda$ a $p$-restricted partition of degree $r$.

\q Mullineux's conjecture was proved by Ford  and Kleshchev in \cite{FK}. 

\bs

\q Recall that for $r\leq n$ we also have Green's inverse functor

 $$g:\mod (k\Sym_r)\to \mod(S(n,r))$$
 
which is defined in the following way. Set $S=S(n,r)$  and let $e\in S$ be the idempotent defined in \cite{EGS}. We identify   $k\Sym_r$ with $eSe$, as in \cite{EGS} . Then we have  the inverse Schur functor $g:\mod(eSe)\to \mod(S)$, $gX=Se\otimes_{eSe} X$. 

Recall that $fS^\lambda E= M(\lambda)$,  for $\lambda\in \Lambda^+(n,r)$, $r\leq  n$.  We  shall also need the reverse direction, namely that $g M(\lambda)=S^\lambda E$, for $\lambda\in \Lambda^+(n,r)$.

\bs

{\bf Lemma 1.3.3} \q\sl  Suppose $p\geq 5$.

(i) If $X$ is a finite dimensional polynomial module of degree $r\leq n$ and $X$ has a good filtration then $gfX=X$.

(ii) For $\lambda\in \Lambda^+(n,r)$, $r\leq n$, we have $g\, M(\alpha)=S^\alpha E$.

(iii) For $\lambda\in \Lambda^+(n,r)$, an $S(n,r)$-module $Y$ and $0\leq i<  p-2$ we have
$$\Ext^i_{S(n,r)}(S^\lambda E,Y)= \Ext^i_{\Sym_r}(M(\lambda),fY).$$

\rm
\bs

\begin{proof}   We note that,  for $\lambda\in \Lambda^+(n,r)$,  we have $g\, \Sp(\lambda)=\nabla(\lambda)$, \cite{D1}, Proposition 10.6 (i).    Now let $Y= fX$. Then by the universal property of the tensor product construction the inclusion map $Y\to X$ induces the map $\theta: gY\to X$ such that $f\circ \theta: fgY\to X$ is inclusion.  In particular the image  of $\theta$ contains $fX$ and $fX$ generates $X$, by \cite{D3}, Proposition 4.5.4.  Hence $\theta$ is surjective. We now choose a filtration $X=X_1>X_2>\cdots > X_{m+1}$ with $X_i/X_{i+1}\cong \nabla(\lambda(i))$, $1\leq i\leq m$, for partitions $\lambda(1),\ldots,\lambda(m)$ of $r$ into at most $n$ parts. 
By the right exactness of  $g$, we have 

\begin{align*}\dim gfX&\leq \sum_{i=1}^m  \dim gf(X_i/X_{i-1})=\sum_{i=1}^m \dim g\, \Sp(\lambda(i))\cr
&= \sum_{i=1}^m \dim \nabla(\lambda(i))= \dim X
\end{align*}
so that $\dim X\geq \dim g f X$ and hence the surjection  $\theta$ is an isomorphism.

(ii) Take $X=S^\lambda E$ in (i).

(iii) Take $X=M(\lambda)$ in \cite{D1}, Proposition 10.5(ii).

\end{proof}

{\bf Remarks 1.3.4}    (i) The results (and arguments) just given are valid also in the quantised context of \cite{D1}.

(ii)   The corresponding result for modules with a Weyl filtration is given in \cite{HN},  Theorem 3.8.1.

\bs

\q We collect together some properties of nodes that will be used in what follows.   Let $\lambda$ be a partition.  

(i)  We call a node $R$ of $\lambda$ (or more precisely of the diagram of $\lambda$)    {\it removable} if the removal of $R$ from the diagram of $\lambda$ leaves the diagram of a partition, which will be denoted $\lambda_R$. Thus the node  $R$ is removable node if it has the form $(i,\lambda_i)$ for some $1\leq i\leq  \l(\lambda)$ and either $i=\l(\lambda)$ or 
$\lambda_i >\lambda_{i+1}$. 

(ii) An addable node $A$ of $\lambda$ is an element of $\nat\times \nat$ such that the addition of $A$ to the diagram of $\lambda$ gives the diagram of a partition, which will be denoted $\lambda^A$. Thus $A$ is addable if it has the form $(i,\lambda_i+1)$ for some $1\leq i\leq \l(\lambda)$ and either $i=1$ or $\lambda_i<\lambda_{i-1}$ or  if it is  $A=(\l(\lambda)+1,1)$. 

(iii)  The {\it residue} of a node $A=(i,j)$ of a partition $\lambda$ is defined to be the congruence class of  $j-i$ modulo $p$. 

(iv) Let $A$ and $B$ be removable or addable nodes of of $\lambda$. We  shall say that  $A$ is lower than $B$ if  $A=(i,r)$, $B=(j,s)$ and $i>j$. 

(v) Let $R$ be a removable node of $\lambda$ with $\res(R)=\alpha$. We say that $R$ is {\it normal} if for every addable node $A$ above $R$ with $\res(A)=\alpha$ there exists a removable node $C(A)$ strictly between $A$ and $R$ with residue $\alpha$ and $C(A)\neq C(A')$ for $A\neq A'$.

(vi) The {\it residue content} of the Young diagram of $\lambda$, denoted  $\cont(\lambda)$,  is defined to be the $p$-tuple,
$(c_0,c_1,\dots,c_{p-1})$ where $c_\alpha=\cont_\alpha(\lambda)$ is the number of nodes in $[\lambda]$ which have residue 
$\alpha$ for $\alpha=0,1,\dots,p-1$.

\bs

\q The following  is   known as  Nakayama's Conjecture \cite{JK}  6.1.21.  Let $\lambda,\mu\in \P_\reg(r)$. The irreducible $\Sym_r$-modules  $D^\lambda$ and $D^\mu$ belong to the same block if and only if $[\lambda]$ and $[\mu]$ have the same $p$-core. Moreover, any Specht module belongs to a  block of $\Sym_r$ and for $\lambda,\mu\in \P(r)$  the corresponding Specht modules $\Sp(\lambda)$ and $\Sp(\mu)$ belong to the same block if and only if $[\lambda]$ and $[\mu]$ have the  same $p$-core.  An equivalent description of the blocks for the symmetric groups is given using the residue content of a partition $\lambda$, \cite{JK}, Theorem 2.7.14. In particular for $\lambda,\mu\in \P_\reg(r)$ we have that  $D^\lambda$ and $D^\mu$ belong to the same block if and only if $\cont(\lambda)=\cont(\mu)$.

\bigskip

\q  Let $m$ and $n$ be positive integers.   If $V$ is a module for $\Sym_m\times \Sym_n$ over $k$ then   the space of fixed points $V^{\Sym_m}$ is naturally a $\Sym_n$-module.  We write $\FF_{m,n}$, or $\FF_m$ for short, for the corresponding functor from $\mod(\Sym_m\times \Sym_n)$ to $\mod(\Sym_n)$.   We write $R^i\FF_m$, $i\geq 0$,  for the derived functors.   We regard $\Sym_m\times \Sym_n$ as subgroup of $\Sym_{m+n}$, in the obvious way, with $\Sym_m$ permuting $\{1,2,\ldots,m\}$ and $\Sym_n$ permuting $\{m+1,\ldots,m+n\}$.   For a $\Sym_{m+n}$-module $V$ we write simply $\FF_m(V)$ for $\FF_m(\Res_{\Sym_m\times \Sym_n}(V))$ and $R^i\FF_m(V)$ for $R^i\FF_m(\Res_{\Sym_m\times \Sym_n}(V))$, \\
$i\geq 0$.   For a $\Sym_m$-module $V$ we also write $\FF_m(V)$ for the space of fixed points $V^{\Sym_m}$.   

\q To simplify the lengthy calculations that follow we write $F_m(V)$ for\\
 $\dim \FF_m(V)$, for $V\in \mod(\Sym_m\times\Sym_n)$, and for a partition  $\lambda$ of $r\geq m$ we write  $\FF_m(\lambda)$ for the $\Sym_{r-m}$-module  $\FF_m(\Sp(\lambda))$ and $F_m(\lambda)$ for $F_m(\Sp(\lambda))$. 

\bs

\q  When dealing with filtrations of modules for symmetric groups it will be convenient to take the terms in descending order.  Let  $V=V_1\geq  V_2\geq  \cdots\geq  V_{s+1}=0$ be a filtration of  a finite dimensional $\Sym_r$-module $V$. We call this a  a Specht filtration if for each $1\leq i\leq s$ the quotient $V_i/V_{i+1}$ is isomorphic to $\Sp(\lambda^i)$ for some $\lambda^i\in \P(r)$.  In general  if $V_i/V_{i+1}$ is isomorphic to $X_i$ (as a $\Sym_r$-module), for $1\leq i\leq s$ we write   $V\sim X_1 | X_2 | \ldots | X_s$. We shall also indicate such a filtration by the diagram
$$\begin{array}{ccc}
X_1\cr
\hrulefill\cr
X_2\cr
\hrulefill\cr
\cdots \cr
\hrulefill\cr
X_s.
\end{array}$$

\q  By \cite{James},  Theorem 9.3,  and the above  description of the blocks  we have the following result.

\bigskip

{\bf Theorem  1.3.5} \sl Let $\lambda\in \P(r)$.

(i) (Branching Theorem) The $\Sym_{r-1}$-module $\Res_{\Sym_{r-1}}(\Sp(\lambda))$ has a Specht filtration with sections $\Sp(\lambda_R)$, where $R$ runs over the set of removable nodes of $\lambda$ and in such a way that the module $\Sp(\lambda_R)$ occurs above $\Sp(\lambda_S)$  if $\lambda_R>\lambda_S$.

(ii) If the removable nodes of $\lambda$ have distinct residues  then we have 

$$\Res_{\Sym_{r-1}}(\Sp(\lambda))\cong \bigoplus_R\Sp(\lambda_R)$$

where $R$ runs over the set of removable nodes of $\lambda$.

\rm
\bs

The following bound (a consequence of the left exactness of  $\FF_m$) will be used many times in the rest of the paper.

\bs

{\bf Corollary 1.3.6}  \sl  Let $\lambda$ be a partition of $r$ and suppose $m<r$.   The we have
$$F_m(\lambda)\leq \sum_R F_m(\lambda_R)$$
where $R$ runs over removable nodes of $\lambda$, with equality if  the residues of the removable nodes are distinct.

\bs
\rm

{\bf Remark 1.3.7} \sl   Another useful bound  is obtained by reduction from characteristic $0$. Suppose $\lambda$ is a partition of $r\geq m$.  Then the Specht module $\Sp(\lambda)$ (over $k$) is defined over $\zed$, i.e.,  we have $\Sp(\lambda)=k\otimes _\zed \Sp_\zed(\lambda)$,  for a suitable $\zed$-form, $\Sp_\zed(\lambda)$,  of the Specht module $\Sp_\que(\lambda)$ over $\que$, for the rational group algebra $\que \Sym_r$.  Then the group of invariants $\Sp_\zed(\lambda)^{\Sigma_m}$ has rank equal to the dimension $\dim_\que \Sp_\que(\lambda)^{\Sym_m}$ and $\Sp_\zed(\lambda)/\Sp_\zed(\lambda)^{\Sym_m}$ is torsion free so  we have an embedding $k\otimes_\zed \Sp_\zed(\lambda)^{\Sym_m}\to \Sp(\lambda)^{\Sym_m}$. Hence we have  
$$\dim_\que \Sp_\que(\lambda)^{\Sym_m}\leq F_m(\lambda).$$

\rm 

\bs

\q It will be crucial for us to know which irreducible modules $D^\lambda$ of $\Sym_r$ remain irreducible on restriction to the subgroup $\Sym_{r-1}$. By a result of Kleshchev,  \cite{BK}, Theorem E',  we have the following description. 

\bs

{\bf Lemma 1.3.8} \sl Let $\lambda\in \P_\reg(r)$. The  $\Sym_{r-1}$-module $\Res_{\Sym_{r-1}}(D^\lambda)$ is irreducible if and only if $\lambda$ has a unique normal node. In this case $\Res_{\Sym_{r-1}}(D^\lambda)\cong D^{\lambda_R}$, where $R$ is the unique normal node of $\lambda$.

\rm
\bs


\q Let $\lambda \in \P(r)$. We recall some well known results by G. James and M. Peel, \cite{JP},  Theorem 3.1, for the restriction of a Specht module $\Sp(\lambda)$ on the Young subgroup $\Sym_m\times\Sym_{r-m}$ of $\Sym_r$. In order to do this we must discuss first skew-diagrams and the corresponding skew Specht modules.

\q Let $m$ be a positive integer with $m<r$. Let $\lambda\in \P(r)$ and $\mu\in \P(m)$ with $\lambda=(\lambda_1,\dots,\lambda_s)$ and $\mu=(\mu_1,\dots,\mu_t)$. Suppose that $\mu_i\leq\lambda_i$ for all $i$. Then the skew digram $\lambda/\mu$ is defined to be 

$$\{(i,j)\in \nat\times \nat    \,| \, 1\leq i\leq s, \mu_i <j\leq \lambda_i\}.$$

To each skew diagram there is  associated a skew Specht module $\Sp(\lambda/\mu)$. The  construction can be found in \cite{JP} where it is shown that $\Sp(\lambda/\mu)$ has a Specht filtration as a $\Sym_{r-m}$-module. More precisely we have the following result.

 \bs
 
 {\bf Lemma 1.3.9} \sl Let $\lambda\in \P(r)$ and $m$ a positive integer with $m<r$.\\
 (i) The Specht module $\Sp(\lambda)$ has a filtration as a  $\Sym_m\times\Sym_{r-m}$-module with sections isomorphic to $\Sp(\mu)\otimes \Sp(\lambda/\mu)$, where each partition $\mu$ of $m$ such that $\lambda/\mu$ exists occurs exactly once. Moreover the section $\Sp(\mu)\otimes \Sp(\lambda/\mu)$ occurs above $\Sp(\tau)\otimes \Sp(\lambda/\tau)$ in this filtration if $\mu>\tau$.\\
 (ii) The $\Sym_{r-m}$-module $\Sp(\lambda/\mu)$ has a Specht filtration and the Specht factors are given by the Littlewood-Richardson Rule. 
 Furthermore, the filtration may be chosen so that a section $\Sp(\alpha)$ occurs above a section $\Sp(\beta)$ if $\alpha>\beta$ (for partitions $\alpha,\beta$ of $r-m$).

 \rm

 \bigskip
 
 \q The assertion concerning the ordering of the terms of the filtration in (i) is not explicitly stated in \cite{JP} but is clear from the construction.  The assertion concerning the ordering of the terms of the filtration in (ii) follows from the corresponding result for representations of general linear groups, see e.g., \cite{Skew}, p477.
 
 \bigskip

 \q We assume of the reader some familiarity  with the  Littlewood-Richardson Rule, hereafter abbreviated to LRR.  More details can be found in \cite{James} 16.4.

 \q For a module $X$ of finite length we write ${\rm soc}(X)$ for its socle, ${\rm hd}(X)$ for its head and ${\rm rad}(X)$ for its radical.

 \subsection{Some Preliminary Results}

\q  It will be  useful in what follows  to know $\FF_p(\lambda)$ and  $R^1\FF_p(\lambda)$ for partitions $\lambda=(\lambda_1,\lambda_2,\lambda_3)$ with $\deg(\lambda)=p$ that lie in the principal block.
 
 \bigskip
 
{\bf Lemma 1.4.1} 
\sl
(i) For $p=2$ we have  $\FF_2(2)=\FF_2(1,1)=k.$\\
(ii) For $p\geq3$ we have $\FF_p(p)=\FF_p(p-1,1)=k$,  
$\FF_p(p-2,1,1)=0$ and $R^1\FF_p(p-2,1,1)=k$.

\rm
\bs

\begin{proof}

(i) For $p=2$ the modules $\Sp(2)$ and $\Sp(1,1)$ are trivial and one dimensional so the result is clear.

(ii)  Now assume $p\geq 3$. Certainly we have   $\FF_p(p)=k$.   By \cite{Peel},  Theorem 2,  we have that $\Sp(p-1,1)$ has exactly two composition factors and we have a short exact sequence,

$$0\rightarrow D^{(p)}\rightarrow \Sp(p-1,1)\rightarrow D^{(p-1,1)}\rightarrow0.$$

Applying   $\FF_p$ we obtain $\FF_p(p-1,1)=k$.

\q We consider now the Specht module $\Sp(p-2,1,1)$.  If $p=3$ then \\
$\Sp(1,1,1)\cong D^{(2,1)}\cong\sgn_3$ and so   $\FF_3(1,1,1)=0$. Moreover by \cite{KN},  Lemma 5.1 (ii), we get  $R^1\FF_3(1,1,1)=R^1\FF_3(D^{(2,1)})=k$. 

\q Assume now  $p\geq5$. By  \cite{Peel},  Theorem 2, the module  $\Sp(p-2,1,1)$ has two composition factors and we have  a short exact sequence

$$0\rightarrow D^{(p-1,1)}\rightarrow \Sp(p-2,1,1)\rightarrow D^{(p-2,1,1)}\rightarrow0.$$

Therefore we have $\FF_p(p-2,1,1)=0$. Moreover by \cite{KN},  Lemma 5.1 (ii),  we have  $R^1\FF_p(D^{(p-1,1)})=k$ and $R^1\FF_p(D^{(p-2,1,1)})=0$. Now the long exact sequence obtained by applying the derived functors of  $\FF_p$ gives  \\
 $R^1\FF_p(p-2,1,1)=R^1\FF_p(D^{(p-1,1)})=k$.

\end{proof}

\subsection{Methodology}

\q We here describe a  method that we will use in what follows  to calculate the dimension of spaces of invariants.

\q Let $m$ and $n$ be positive integers. We denote by $b_{m}$ the principal block idempotent of $k\Sym_m$. If $V$ is a $k(\Sym_m\times \Sym_n)$-module then $b_mV$ is a $k(\Sym_m\times \Sym_n)$-submodule (indeed a $k(\Sym_m\times \Sym_n)$-module summand) of $V$ and we have   $R^i\FF_{m,n}V=R^i\FF_{m,n} (b_mV)$, for $i\geq 0$. 

\q Let $p\geq 3$ and $\lambda=(\lambda_1,\lambda_2,\lambda_3)$ be a partition with $\deg(\lambda)=r>p$ $\lambda_1=p$ and $\lambda_3\neq0$. By Lemma 1.4.1 we have that the Specht module $\Sp(\lambda)$ has a filtration as a $\Sym_p\times \Sym_{r-p}$ with sections $\Sp(\mu)\otimes \Sp(\lambda/\mu)$, where each partition $\mu$ of $m$ such that $\lambda/\mu$ exists occurs exactly once. Moreover the section $\Sp(\mu)\otimes \Sp(\lambda/\mu)$ occurs above $\Sp(\tau)\otimes \Sp(\lambda/\tau)$ in this filtration if $\mu>\tau$.

\q  Now the partitions $\mu$ with at most $3$ parts and degree $p$  such that  $\Sp(\mu)$, lies in  the principal block of $\Sym_p$ are $p$, $(p-1,1)$ and $(p-2,1,1)$. Applying $b_{p}$ to $\Sp(\lambda)$ we obtain the 
 $\Sym_p\times \Sym_{r-p}$-module filtration 
$$\begin{array}{ccc}
\Sp(p)\otimes \Sp(\lambda/(p))\cr
\hrulefill\cr
\Sp(p-1,1)\otimes \Sp(\lambda/(p-1,1))\cr
\hrulefill\cr
\Sp(p-2,1,1)\otimes\Sp(\lambda/(p-2,1,1))\cr
\end{array}$$ 
of $b_p\Sp(\lambda)$.  We shall call this the {\it standard filtration of the principal $\Sym_p$-block component},   or just the {\it principal $\Sym_p$-block filtration}, 
of $\Sp(\lambda)$.

\q Thus we have $R^i\FF_p(\Sp(\lambda))=R^i\FF_p(b_p\Sp(\lambda))$, $i\geq 0$,  and  $b_p\Sp(\lambda)$ has the $\Sym_p\times \Sym_{r-p}$-module filtration depicted above.

\q In analysing $R^i\FF_p(\Sp(\lambda))$ it will  often be convenient to first deal with the module $X$, say, corresponding to the   bottom two sections of the above filtration. Thus we have an exact sequence 

\begin{align}
0&\rightarrow  \Sp(p-2,1,1)\otimes \Sp(\lambda/(p-2,1,1))\rightarrow X\cr
&\rightarrow\Sp(p-1,1)\otimes\Sp(\lambda/(p-1,1))\rightarrow 0.
\end{align}

Let  $Y=\FF_p(X)$.  Then   $Y\leq \FF_p(\lambda)$. The structure of the $\Sym_{r-p}$-module $Y$ will be the key point  in order to calculate the dimensions of the module of  invariants $\FF_p(\lambda)$. 
 
 \q We will analyse $Y$ in the following way.  Using the above short exact sequence of $X$ we get an exact sequence
\begin{align*}
0&\rightarrow H^0(\Sym_p,\Sp(p-2,1,1)\otimes\Sp(\lambda/(p-2,1,1)))\rightarrow Y \cr
&\rightarrow H^0(\Sym_p,\Sp(p-1,1)\otimes \Sp(\lambda/(p-1,1)))\cr
&\rightarrow H^1(\Sym_p,\Sp(p-2,1,1)\otimes\Sp(\lambda/(p-2,1,1))).
\end{align*}
Using now Lemma 1.4.1 we obtain the exact sequence
 
\begin{align}0\rightarrow Y\rightarrow \Sp(\lambda/(p-1,1))\rightarrow \Sp(\lambda/(p-2,1,1)).\end{align}

 This sequence, used in conjunction  with  LRR, will be our main tool in the analysis of $Y$.

\bs\bs\bs\bs


\section{Dimensions  of Invariants.}

\q We calculate, for  $p\geq 5$, the dimension of the space of    $\Sym_p$-invariants of Specht modules labelled by partitions with at most three parts and first part at most $p$. However, for the moment $p$ is  arbitrary.

 \begin{lemma}
 
 For a partition $\lambda$ with $\lambda_1\leq p-1$ and $\deg(\lambda)\geq p$ we have,
 
 (i) $\FF_p(\lambda)\neq 0$ if and only if $\lambda$ has the form $(p-1)^la$, for some $l\geq 1$ and $0\leq a<p-1$;
 
 (ii)  $\FF_p(\lambda)=k$, the trivial $\Sym_{r-p}$-module, for $\lambda=(p-1)^la$, $l\geq 1$, $0\leq a < p-1$.
 
 \end{lemma}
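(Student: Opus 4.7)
I would prove (i) and (ii) simultaneously by induction on $r=\deg(\lambda)\geq p$, using Corollary 1.3.6 (branching from $\Sym_r$ to $\Sym_{r-1}$) as the principal inductive tool.

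\textbf{Base case $r=p$.} Since $\FF_p(\Sp(\lambda))=\Hom_{\Sym_p}(k,\Sp(\lambda))$ is non-zero only when $\Sp(\lambda)$ lies in the principal block of $\Sym_p$, Nakayama's Conjecture forces $\lambda$ to be a hook $(p-k,1^k)$, and the hypothesis $\lambda_1\leq p-1$ then gives $k\geq 1$. Peel's theorem (used in the proof of Lemma 1.4.1) supplies the short exact sequence
\[
0\to D^{(p-k+1,1^{k-1})}\to\Sp(p-k,1^k)\to D^{(p-k,1^k)}\to 0,
\]
whence $\FF_p(\Sp(p-k,1^k))=k$ for $k=1$ and $0$ for $k\geq 2$. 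The only partition of $p$ satisfying the hypothesis with $\FF_p\neq 0$ is therefore $(p-1,1)=((p-1)^1,1)$, in agreement with (i) and (ii).

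\textbf{Inductive step.} For $r>p$, Corollary 1.3.6 gives $F_p(\lambda)\leq\sum_R F_p(\lambda_R)$, with equality when the residues of the removable nodes of $\lambda$ are pairwise distinct, and strong induction applies to each $\lambda_R$. I would then handle three cases. \emph{(a)} If $\lambda=((p-1)^l,a)$ is of the prescribed form, a direct residue calculation verifies that the removable nodes have pairwise distinct residues; exactly one $\lambda_R$ is again of the prescribed form (contributing $1$ by induction) and any others are not (contributing $0$), giving $F_p(\lambda)=1$. \emph{(b)} If $\lambda_1\leq p-2$, every $\lambda_R$ inherits $\lambda_1\leq p-2$ and so is not of the prescribed form; induction gives $F_p(\lambda)=0$. \emph{(c)} If $\lambda_1=p-1$ but $\lambda$ is not of the prescribed form, a short combinatorial check shows that some $\lambda_R$ can be of the prescribed form only when $\lambda=((p-1)^l,c,1)$ with $l\geq 1$ and $1\leq c\leq p-2$; for every other $\lambda$ covered by (c) all $\lambda_R$'s fail to be of the form and induction gives $F_p(\lambda)=0$.

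\textbf{The tricky case and the main obstacle.} For $\lambda=((p-1)^l,c,1)$ the two removable nodes $(l,p-1)$ and $(l+2,1)$ share the residue $-l-1\pmod p$, so Corollary 1.3.6 only yields $F_p(\lambda)\leq 1$; ruling out the value $1$ is the core of the argument. I would pass to the principal $\Sym_p$-block filtration of $\Sp(\lambda)$ from Section 1.5. Since $\lambda_1<p$ the $\Sp(p)$-section is absent, the top two sections are $\Sp(p-1,1)\otimes\Sp(\lambda/(p-1,1))$ and $\Sp(p-2,1,1)\otimes\Sp(\lambda/(p-2,1,1))$, and all lower sections (from hooks $(p-k,1^k)$ with $k\geq 3$) satisfy $\FF_p=0$ by the base case. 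Using Lemma 1.4.1 in the $\FF_p$-long exact sequence, exactly as in equation (2) of the Methodology, one obtains an embedding
\[
\FF_p(\Sp(\lambda))\hookrightarrow\ker\bigl(\partial\colon\Sp(\lambda/(p-1,1))\to\Sp(\lambda/(p-2,1,1))\bigr).
\]
The main technical obstacle is then to show that $\partial$ is injective for $\lambda=((p-1)^l,c,1)$; I would compute both skew Specht modules via the Littlewood--Richardson rule and identify $\partial$ with a non-zero map arising from the non-trivial extension class between the top two sections of the block filtration. This non-splitting is the crux of the proof.
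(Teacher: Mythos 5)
Your overall inductive architecture is sound and closely parallels the paper's first half: the base case via Nakayama and Peel, the equality case of Corollary 1.3.6 when residues are distinct, and the reduction (via Corollary 1.3.6 and induction) to the single problematic shape $\lambda=((p-1)^l,c,1)$. However, your proposed resolution of that problematic case has a genuine gap, and it is not the route the paper takes.

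For the hard case you propose to embed $\FF_p(\lambda)$ into $\ker\bigl(\partial\colon\Sp(\lambda/(p-1,1))\to\Sp(\lambda/(p-2,1,1))\bigr)$ via the principal $\Sym_p$-block filtration and then to prove $\partial$ injective. The embedding itself is fine (one also needs the observation that the deeper hook layers $\Sp(p-k,1^k)$ with $k\geq 2$ all have zero $\Sym_p$-invariants, so that $\FF_p$ of the sub-filtration below the top two layers vanishes), but the claim that $\partial$ is injective is exactly where the work lies, and you do not supply an argument. Saying that $\partial$ ``arises from a non-trivial extension class'' does not give injectivity: a non-zero connecting map can have a large kernel, and in fact all you need (and all that could plausibly hold) is that $\ker\partial$ contains no trivial $\Sym_{r-p}$-submodule. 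Neither version is proved, and neither is routine. There is also a second, smaller gap: for $\lambda$ of the prescribed form your argument pins down $\dim\FF_p(\lambda)=1$ but not that the one-dimensional module is trivial rather than sign; the paper handles this by quoting James, Theorem 24.4, to produce a non-zero $\Sym_r$-fixed vector in $\Sp((p-1)^la)$, which lies in $\FF_p(\lambda)$ and is automatically $\Sym_{r-p}$-fixed.

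The paper resolves the problematic case by a completely different mechanism. Assuming $\FF_p(\lambda)\ne 0$ with $\lambda=(p-1)^lb1$, one gets (by the bound $F_p\leq 1$ and the trivial submodule observation) that $\FF_p(\lambda)=k$, hence $\Hom_{\Sym_r}(M(p,r-p),\Sp(\lambda))\ne 0$ by Frobenius reciprocity; since $\lambda$ is $p$-restricted this forces $L(\lambda)$ to be a composition factor of $S^pE\otimes S^{r-p}E$, i.e.\ $\lambda$ is a $2$-special partition in the sense of \cite{DG3}, \cite{DG4}. The classification of such partitions in those papers then forces $\lambda=(p-1)^ma$, contradicting the assumed shape. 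This external classification result is what makes the proof close; your filtration approach might also be completable, but as written the crux is an assertion, not a proof.
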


 \begin{proof}

 For $p=2$  we have that  $\Sp(1^r)\cong\sgn_r\cong \Sp(r)$ is the trivial module of $\Sym_r$ and so the result  is clear. 

\q Therefore we may assume that $p\geq3$.    Next note that if $\deg(\lambda)=p$ then $\FF_p(\lambda)\neq 0$  implies that $\lambda=(p-1,1)$ e.g., by \cite{James},  24.4 Theorem and we then get $\FF_p(\lambda)=k$, e.g., by \cite{Peel}, Theorem 2

\q We now proceed by induction on the degree of $\lambda$.  We write  $r=l(p-1)+a$ for integers $r,a$ with $0\leq a < p-1$. 

\q Note that  
$\FF_r((p-1)^la)\neq 0$. This follows directly by \cite{James}, Theorem 24.4. Hence  $\FF_p((p-1)^la)$ contains a copy of the trivial $\Sym_{r-p}$-module $k$.

\q  If $\lambda=(p-1)^l$ then we have  $\Res_{\Sym_{r-1}}(\Sp(\lambda))=\Sp((p-1)^{l-1}(p-2))$.  By induction  we have  $F_p((p-1)^{l-1}(p-2))$=1 and since $F_p(\Sp((p-1)^l))=F_p(\Sp((p-1)^{l-1}(p-2)))$ we have $\FF_p(\lambda)=k$.  Likewise for $\lambda=(p-1)^la$ with $0<a<p-1$ we have, by Corollary 1.3.6, 
$$F_p(\lambda)\leq F_p((p-1)^l(p-2)a)+F_p((p-1)^l(a-1))\leq 0+1=1$$
and $\FF_p(\lambda)=k$.

\q Now assume that $\lambda$ is as in the statement of the Lemma and that $\FF_p(\lambda)\neq 0$.   By Corollary 1.3.6  and induction there exists a removable node $R$ of $\lambda$ such that $\lambda_R=(p-1)^lb$, for some $l>0$, $0\leq b< p-1$. Hence $\lambda=(p-1)^l(b+1)$ or $\lambda=(p-1)^lb1$. 

\q We now show that $\lambda\neq (p-1)^lb1$.  If $\lambda=(p-1)^lb1$ then, by Corollary 1.3.6, left exactness of $\FF_p$ and induction, we have $F_p(\lambda)=1$ and indeed $\FF_p(\lambda)=k$.

\q  Therefore $\Hom_{\Sym_p\times \Sym_{r-p}}(k,\Sp(\lambda))\neq0$ and so, by Frobenius reciprocity, 
 $\Hom_{\Sym_r}(M(p,r-p),\Sp(\lambda))\neq0$. Since $\lambda$ is restricted  we have that $\Sp(\lambda)$ has simple socle $f(L(\lambda))$ (see \cite{EGS}, 6.4). Therefore $\Hom_{\Sym_r}(M(p,r-p),\Sp(\lambda))\neq0$ gives that $[M(p,r-p):f(L(\lambda))]\neq0$. Moreover we have that
 
  \begin{align*}
  [S^pE\otimes S^{r-p}E:L(\lambda)]&=[f(S^pE\otimes S^{r-p}E):f(L(\lambda))]\cr
  &=[M(p,r-p):f(L(\lambda))]
\end{align*}
 
  and so $L(\lambda)$ occurs as a composition factor of  $S^pE\otimes S^{r-p}E$. In particular using the language of  \cite{DG3} (see also \cite{DG4}) we have that $\lambda$ is a $2$-good partition and since it is restricted $\lambda$ is a $2$-special partition. Hence by  \cite{DG3},  Proposition 4.10 or by \cite{DG4}, Lemma 2.1 we get that $\lambda=(p-1)^ma+(p-1)^nc$ for some $m,n\geq0$ and $0\leq a,c<p-1$ or $\lambda=(p-2)^mac$ for $m\geq0$ and $0\leq a,c<p-2$. Since $\lambda_1=p-1$ we conclude   that $\lambda=(p-1)^ma$  and we have a contradiction. The proof now is complete.

 \end{proof}

\q  From now on  we assume that $p\geq 5$.  (We treat the cases $p=2$ and $p=3$ separately  in the last two sections.)
 We calculate the dimension of the space of invariants $\FF_p(\lambda)$ for a partition $\lambda=(\lambda_1,\lambda_2,\lambda_3)$ with at most $3$ parts and $\lambda_1=p$.  We will do this in several steps.
 
 \begin{lemma}
 
 Let $\lambda=(p)$ then $F_p(\lambda)=1$.
 
 \end{lemma}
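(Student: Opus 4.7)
The statement is essentially immediate once one unpacks the definitions, so the plan is extremely short. The partition $\lambda=(p)$ has degree exactly $p$, so we are in the ``boundary'' situation $r=m=p$ and $n=r-p=0$; the group $\Sym_{r-p}=\Sym_0$ is trivial and $\FF_p(\Sp((p)))$ is simply the space of $\Sym_p$-fixed points in the Specht module $\Sp((p))$.

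The key fact is that the Specht module $\Sp((p))$ for the one-row partition is the trivial one-dimensional $k\Sym_p$-module (this is standard, see \cite{James}, and matches $fS^pE=M((p))=k$ via Proposition 1.3.1, together with $\nabla((p))=S^pE$ having one-dimensional image under $f$ when considered as $\Sp((p))$). Therefore $\Sym_p$ acts trivially on $\Sp((p))$, and so
\[
\FF_p(\Sp((p)))=\Sp((p))^{\Sym_p}=\Sp((p))\cong k,
\]
which has dimension $1$. Hence $F_p((p))=1$.

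There is no real obstacle here; the only thing to be careful about is the convention, namely that $\FF_p$ applied to a $\Sym_p$-module (rather than a module for a larger symmetric group) simply returns the space of $\Sym_p$-fixed points. This lemma is recorded as the base case of an inductive calculation that will be developed in the following lemmas for partitions $(\lambda_1,\lambda_2,\lambda_3)$ with $\lambda_1=p$ and strictly larger degree.
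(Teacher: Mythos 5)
Your proof is correct and matches the paper's (one-word) proof ``Clear'': $\Sp((p))$ is the trivial one-dimensional $k\Sym_p$-module, so its $\Sym_p$-fixed points are all of it and $F_p((p))=1$. You have simply spelled out the reason the paper considers obvious.
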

 
 \begin{proof}
Clear. 
 \end{proof}
 
 \begin{lemma} For $0<a\leq p$ we have
 $$F_p(p,a)=
 \begin{cases} 
 a, &\mbox{if } 1\leq a\leq p-1; \cr
p-1, & \mbox{if } a=p. 
\end{cases}$$ 
 
 \end{lemma}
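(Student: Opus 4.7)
The plan is to proceed by induction on $a$, using Corollary 1.3.6 together with Lemma 2.1 for the inductive step, and a Schur-algebra argument for the delicate base case $a = 1$.

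I first dispatch the cases $2 \leq a \leq p-1$ and $a = p$. For $a = p$ the partition $(p,p)$ has a unique removable node $(2,p)$, so Corollary 1.3.6 gives $F_p(p,p) = F_p(p,p-1)$ outright. For $2 \leq a \leq p-1$ the two removable nodes $(1,p)$ and $(2,a)$ of $(p,a)$ have residues $-1$ and $a-2$ modulo $p$; these are distinct throughout this range, so Corollary 1.3.6 yields
\[
F_p(p,a) = F_p(p-1,a) + F_p(p,a-1).
\]
The partition $(p-1,a)$ has the form $(p-1)^{l}b$ with $0 \leq b < p-1$ (take $l=1$, $b=a$ if $a \leq p-2$, and $l=2$, $b=0$ if $a = p-1$), so Lemma 2.1 gives $F_p(p-1,a) = 1$. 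Combined with the induction hypothesis $F_p(p,a-1) = a-1$, this produces $F_p(p,a) = a$, and hence $F_p(p,p) = F_p(p,p-1) = p-1$.

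The main obstacle will be the base case $a = 1$: here the two removable nodes of $(p,1)$ both have residue $-1 \pmod{p}$, so Corollary 1.3.6 delivers only $F_p(p,1) \leq 2$. To sharpen this to $F_p(p,1) = 1$ I will pass to the polynomial side via Lemma 1.3.3(iii), which (for $n \geq p+1$) gives
\[
F_p(p,1) = \dim \Hom_G(S^p E \otimes E, \nabla(p,1)).
\]
By Pieri's rule the tensor product $S^p E \otimes E = \nabla(p) \otimes \nabla(1)$ admits a good filtration with sections $\nabla(p+1)$ and $\nabla(p,1)$. A direct hook-length inspection shows that $(p,1)$ has no $p$-hook (its hook lengths are $p+1,\, p-1,\, p-2,\, \ldots,\, 2,\, 1,\, 1$), so its $p$-core is $(p,1)$ itself, whereas the $p$-core of $(p+1)$ is $(1)$. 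Since these cores differ, the two Weyl sections lie in distinct blocks of $S(n,p+1)$, and the good filtration splits:
\[
S^p E \otimes E \cong \nabla(p+1) \oplus \nabla(p,1).
\]
Then $\Hom_G(S^p E \otimes E, \nabla(p,1)) \cong \End_G(\nabla(p,1)) = k$, giving $F_p(p,1) = 1$ and closing the induction.
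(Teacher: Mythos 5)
Your proof is correct, and the inductive steps for $2\leq a\leq p-1$ and the final step $a=p$ are exactly those in the paper. For the base case $a=1$ the paper also starts from $\FF_p(p,1)=\Hom_{\Sym_{p+1}}(M(p,1),\Sp(p,1))$ and uses that $(p,1)$ is a $p$-core to split $M(p,1)\cong \Sp(p,1)\oplus \Sp(p+1)$; you carry out the identical argument on the $G$-module side via Lemma 1.3.3(iii) and Pieri, splitting $S^pE\otimes E\cong\nabla(p,1)\oplus\nabla(p+1)$, which is the same decomposition viewed through the Schur functor.
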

 
 \begin{proof}   First take $a=1$. We have 
 \begin{align*}\FF_p(p,1)&=\Hom_{\Sym_p}(k,\Sp(p,1))=\Hom_{\Sym_{p+1}}(M(p,1),\Sp(p,1))
 \end{align*}
 by reciprocity. The partition $(p,1)$ is a core so  the permutation module $M(p,1)$ decomposes as $M(p,1)\cong \Sp(p,1)\oplus\Sp(p+1)$. Therefore we get directly that 
 
$$\Hom_{\Sym_{p+1}}(M(p,1),\Sp(p,1))=\Hom_{\Sym_{p+1}}(\Sp(p,1),\Sp(p,1))=k$$
 
   and we are done.

\q Let now $2\leq a\leq p-1$. The partition $(p,a)$ has two removable nodes $(1,p)$ and $(2,a)$  and these have distinct residues. Therefore by Corollary 1.3.6  we have 
 $F_p(p,a)=F_p(p-1,a)+F_p(p,a-1)$.   By Lemma 2.1 and induction on $a$ we get  
  $F_p(p,a)=1+a-1=a$.

\q Finally for   $a=p$ we have $F_p(p,p)=F_p(p,p-1)$, by Corollary 1.3.6 and this is $p-1$, as  proved already.

 \end{proof}

 \begin{lemma}  For $\lambda=(p,a,1)$ with $1\leq a\leq p$ we have 
  $$F_p(p,a,1)=
 \begin{cases} 
a(a+1)/2, &\mbox{if } a\neq p-1; \cr
p(p-1)/2+1, & \mbox{if }  a=p-1 .
\end{cases} $$
\end{lemma}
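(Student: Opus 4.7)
The strategy is induction on $a$, with three regimes distinguished by how the residues of the removable nodes of $\lambda=(p,a,1)$ interact.

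For $1\leq a\leq p-1$ the removable nodes of $\lambda$ are $(1,p)$ and $(3,1)$, together with $(2,a)$ when $a\geq 2$, of residues $-1$, $-2$ and $a-2$ modulo $p$ respectively. These are pairwise distinct for $p\geq 5$, so Corollary 1.3.6 yields the equality
\begin{equation*}
F_p(p,a,1)=F_p(p-1,a,1)+F_p(p,a-1,1)+F_p(p,a),
\end{equation*}
with the middle term absent when $a=1$. By Lemma 2.3, $F_p(p,a)=a$; by Lemma 2.1, $F_p(p-1,a,1)$ vanishes unless $a=p-1$, in which case $(p-1,p-1,1)=(p-1)^2\cdot 1$ is of the form $(p-1)^l b$ required by Lemma 2.1 (with $l=2$, $b=1$) and contributes $1$. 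An elementary induction from the base $F_p(p,1,1)=0+1=1$ then produces $F_p(p,a,1)=a(a+1)/2$ for $1\leq a\leq p-2$, while for $a=p-1$ the extra contribution gives $F_p(p,p-1,1)=1+(p-2)(p-1)/2+(p-1)=p(p-1)/2+1$.

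The case $a=p$ is essentially different: $(p,p,1)$ has only the two removable nodes $(2,p)$ and $(3,1)$, and both have residue $-2\pmod p$. Corollary 1.3.6 therefore supplies only the upper bound
\begin{equation*}
F_p(p,p,1)\leq F_p(p,p-1,1)+F_p(p,p)=(p(p-1)/2+1)+(p-1)=p(p+1)/2,
\end{equation*}
using the $a=p-1$ case and Lemma 2.3. For the matching lower bound I would invoke the methodology of Section 1.5. The standard principal $\Sym_p$-block filtration of $\Sp(p,p,1)$ has top section $\Sp(p)\otimes\Sp(p,1)$ (since $(p,p,1)/(p)$ is the straight shape $(p,1)$), middle section $\Sp(p-1,1)\otimes\Sp((p,p,1)/(p-1,1))$, and bottom section $\Sp(p-2,1,1)\otimes\Sp((p,p,1)/(p-2,1,1))$. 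Applying Lemma 1.4.1 to the submodule $X$ formed by the bottom two sections gives
\begin{equation*}
0\to Y\to \Sp((p,p,1)/(p-1,1))\to \Sp((p,p,1)/(p-2,1,1)),
\end{equation*}
with $Y=\FF_p(X)\leq \FF_p(p,p,1)$, and the long exact sequence associated to $0\to X\to b_p\Sp(p,p,1)\to \Sp(p)\otimes\Sp(p,1)\to 0$ embeds $\FF_p(p,p,1)/Y$ into $\Sp(p,1)$. Decomposing the two skew Specht modules by LRR and tracking the map between them then pins $\dim \FF_p(p,p,1)$ down to $p(p+1)/2$.

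The main obstacle is this lower bound for $a=p$: the dimensions of the skew Specht modules $\Sp((p,p,1)/(p-1,1))$, $\Sp((p,p,1)/(p-2,1,1))$ and of $\Sp(p,1)$ are readily obtained by LRR, but identifying the map $\Sp((p,p,1)/(p-1,1))\to \Sp((p,p,1)/(p-2,1,1))$ from the methodology precisely enough to read off $\dim Y$, together with the dimension of the image of the boundary map into $\Sp(p,1)$, is the delicate part. Everything else (the residue arithmetic, the inductive recursion from Corollary 1.3.6, and the vanishing inputs from Lemmas 2.1 and 2.3) is routine.
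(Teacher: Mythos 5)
Your argument for $1\leq a\leq p-1$ is essentially the same as the paper's: residue arithmetic makes the removable-node bounds in Corollary 1.3.6 exact, and the recursion with Lemmas 2.1 and 2.3 closes by induction. The base $F_p(p,1,1)=0+1=1$, the general step $F_p(p,a,1)=(a-1)a/2+a=a(a+1)/2$ for $2\leq a\leq p-2$, and the $a=p-1$ case with the extra $1$ from $F_p(p-1,p-1,1)$ all match.

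For $a=p$ you have correctly identified the right machinery (principal $\Sym_p$-block filtration, the exact sequence $0\to Y\to V\to W$ from Lemma 1.4.1) and correctly flagged that the remaining issue is a lower bound for $\dim Y$. But you have not resolved the point you flag, and this is a genuine gap: you do not in fact need any information about the map $V\to W$ beyond the trivial observation that it vanishes on sections lying in the wrong block, nor any analysis of the boundary map into $\Sp(p,1)$. By LRR, $V=\Sp((p,p,1)/(p-1,1))$ has a Specht filtration with sections $\Sp(p,1)$, $\Sp(p-1,2)$, $\Sp(p-1,1,1)$, whose $p$-cores $(p,1)$, $(1)$, $(p-1,1,1)$ are pairwise distinct, so $V$ is a direct sum of these three Specht modules; $W=\Sp((p,p,1)/(p-2,1,1))$ is just $\Sp(p-1,2)$. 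Since $\Sp(p,1)$ and $\Sp(p-1,1,1)$ lie in different blocks from $\Sp(p-1,2)$, the summand $\Sp(p,1)\oplus\Sp(p-1,1,1)$ is killed by the map $V\to W$ and hence lies in $Y$. Its dimension is $p + p(p-1)/2 = p(p+1)/2$, which already equals the upper bound from Corollary 1.3.6, so $Y=\FF_p(p,p,1)$ and the computation is complete without ever touching the top section $\Sp(p)\otimes\Sp(p,1)$. In short: compute the $p$-cores of the LRR constituents of $V$ and $W$, observe that $V$ splits, and notice that what is forced into the kernel by block orthogonality already saturates the upper bound.
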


\begin{proof}

First let  $a=1$. Then the partition $\lambda=(p,1,1)$ has two removable nodes $(1,p)$ and $(3,1)$ with residues $p-1$ and $p-2$ respectively. Hence, by Corollary 1.3.6  we have 
 $F_p(p,1,1)=F_p(p-1,1,1)+F_p(p,1)$. It follows by Lemmas  2.1 and  2.3 that $F_p(p,1,1)=0+1=1$, as required.

\q Let now $1<a\leq p-2$. The partition $(p,a,1)$ has three removable nodes $(1,p)$, $(2,a)$ and $(3,1)$ with distinct residues $p-1$, $a-2$ and $p-2$ . Thus by Corollary 1.3.6 we have 
$$F_p(p,a,1)=F_p(p-1,a,1)+F_p(p,a-1,1)+F_p(p,a).$$
 Using now induction on $a$ and Lemmas 2.1 and 2.3 we conclude that 
 $$F_p(p,a,1)=(a-1)a/2+a=a(a+1)/2.$$

\q Let $a=p-1$. The partition $(p,p-1,1)$ has three removable nodes $(1,p)$, $(2,p-1)$ and $(3,1)$ with residues $p-1$, $p-3$ and $p-2$ respectively. Hence we get  
$$F_p(p,p-1,1)=F_p(p-1,p-1,1)+F_p(p,p-2,1)+F_p(p,p-1).$$ 
 By the result of the previous paragraph and Lemmas 2.1 and 2.3 we get that 
 $$F_p(p,p-1,1)=1+(p-2)(p-1)/2+p-1=p(p-1)/2+1.$$

\q Finally we have the case in which $a=p$. By Corollary 1.3.6 we have 
$$F_p(p,p,1)\leq F_p(p,p-1,1)+F_p(p,p).$$
  Therefore by Lemma 2.3 and the previous step we have the upper bound  $F_p(p,p,1)\leq p(p+1)/2$. 

\q We show that in fact $F_p(p,p,1)=p(p+1)/2$. We consider the principal block filtration 
$$\begin{array}{ccc}
\Sp(p)\otimes U\cr
\hrulefill\cr
\Sp(p-1,1)\otimes V\cr
\hrulefill\cr
\Sp(p-2,1,1)\otimes W\cr
\end{array}$$
 of $\Sp(p,p,1)$. By LRR we have 
 $U\sim \Sp(p,1)$, \\ 
 $V\sim \Sp(p,1)|\Sp(p-1,2)|\Sp(p-1,1,1)$ and 
 $W\sim \Sp(p-1,2)$.

\q We observe at this point that $\core(p,1)=(p,1)$, $\core(p-1,2)=(1)$ and $\core(p-1,1,1)=(p-1,1,1)$ and since these are all different we get \\
 $V=\Sp(p,1) \oplus \Sp(p-1,2)\oplus \Sp(p-1,1,1)$.

\q Let now $X$ be  the $\Sym_p\times \Sym_{p+1}$-module corresponding to  the bottom two sections of the above filtration. Thus we have a short exact sequence 
\begin{align*}
0&\rightarrow  \Sp(p-2,1,1)\otimes\Sp(p-1,2)\rightarrow X\cr
&\rightarrow\Sp(p-1,1)\otimes(\Sp(p,1) \oplus \Sp(p-1,2)\oplus \Sp(p-1,1,1))\rightarrow 0.
\end{align*}
Let $Y=\FF_p(X)$. Thus  $Y\leq \FF_p(p,p,1)$. Applying $\FF_p$ to  this exact sequence and using Lemma 1.4.1 we get an exact sequence of  $\Sym_{p+1}$-modules,
$$0\rightarrow Y\rightarrow \Sp(p,1)\oplus \Sp(p-1,2)\oplus \Sp(p-1,1,1)\rightarrow\\
\Sp(p-1,2).$$
By block considerations we have  $\Hom_{\Sym_{p+1}}(\Sp(p,1),\Sp(p-1,2))=0$ and $\Hom_{\Sym_{p+1}}(\Sp(p-1,1,1),\Sp(p-1,2))=0$. 
 Therefore  $\Sp(p,1)\oplus \Sp(p-1,1,1)$ embeds in $Y$.   Using the hook  formula for the dimension of Specht modules we have that $\dim(p,1)=p$ and $\dim(p-1,1,1)=p(p-1)/2$. Hence $\dim Y\geq p(p+1)/2$. So in fact  $Y=\FF_p(p,p,1)$ and $F_p(p,p,1)=p(p+1)/2$.

\end{proof}

\q The next step is to calculate   $F_p(p,a,2)$ for $2\leq a\leq p$. The proof will be done in several steps.  We will treat the cases $a=2,3$ separately before we give a general proof for $a\geq 4$. We will then gather these results together and   give  a uniform  formula for   $F_p(p,a,2)$,  for  $2\leq a\leq p$.

\begin{lemma} We have $F_p(p,2,2)=\dim(2,2)$.

\end{lemma}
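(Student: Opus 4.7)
The plan is to follow the methodology of Section 1.5: bound $F_p(p,2,2)$ from below via the $Y$-analysis of the principal $\Sym_p$-block filtration, bound it from above via Corollary 1.3.6, and then combine the two using the semisimplicity of $\Sym_4$ (valid since $p\geq 5$) to pinpoint the value.

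First I set up the principal $\Sym_p$-block filtration of $\Sp(p,2,2)$. The only hook partitions of $p$ with at most three parts that fit inside $(p,2,2)$ are $(p),(p-1,1),(p-2,1,1)$, producing sections $\Sp(p)\otimes \Sp(2,2)$ on top, $\Sp(p-1,1)\otimes V$ in the middle, and $\Sp(p-2,1,1)\otimes W$ at the bottom. Applying Lemma 1.3.9 (both skew shapes decompose as disjoint unions of simpler shapes) gives $V \sim \Sp(3,1) | \Sp(2,2) | \Sp(2,1,1)$ and $W \sim \Sp(3,1) | \Sp(2,1,1)$. Since $(3,1),(2,2),(2,1,1)$ are each their own $p$-core for $p\geq 5$, they lie in distinct blocks of $\Sym_4$, so $V = \Sp(3,1)\oplus \Sp(2,2)\oplus \Sp(2,1,1)$ and $W = \Sp(3,1)\oplus \Sp(2,1,1)$.

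Next let $Y = \FF_p(X)$, with $X$ the bottom two sections. By Lemma 1.4.1 and the argument of Section 1.5, there is an exact sequence of $\Sym_4$-modules $0\to Y\to V\to W$ whose connecting map is $\Sym_4$-equivariant. Since $W$ has no $\Sp(2,2)$ summand, Schur's lemma forces this map to vanish on the $\Sp(2,2)$ factor of $V$, so $\Sp(2,2)\subseteq Y$. Crucially, semisimplicity of $\Sym_4$ (as $p\geq 5$) forces every $\Sym_4$-submodule of $V$ to be a direct sum of whole summands of $V$, so $\dim Y\in\{2,5,8\}$.

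For the upper bound, I apply Corollary 1.3.6 to the removable nodes $(1,p)$ and $(3,2)$ of $(p,2,2)$. Using Lemma 2.1 (which gives $F_p(p-1,2,2)=0$, since $(p-1,2,2)$ is not of the form $(p-1)^la$) and Lemma 2.4 (which gives $F_p(p,2,1)=3$), this yields $F_p(p,2,2)\leq 3$. Finally, the long exact sequence of $\Sym_p$-cohomology applied to $0\to V_2\to V_1\to \Sp(p)\otimes \Sp(2,2)\to 0$ produces $0\to Y\to \FF_p(p,2,2)\to \Sp(2,2)$ as $\Sym_4$-modules; since $\Sp(2,2)$ is irreducible, the image has dimension $0$ or $2$, so $F_p(p,2,2)\in\{\dim Y,\,\dim Y+2\}$. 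Combining this with $\dim Y\in\{2,5,8\}$ and the bound $F_p(p,2,2)\leq 3$ forces $\dim Y=2$ with vanishing image, giving $F_p(p,2,2)=2=\dim\Sp(2,2)$.

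The main obstacle is the combinatorial observation that semisimplicity of $\Sym_4$ restricts $\dim Y$ to the sparse set $\{2,5,8\}$; once this is in place, the branching upper bound of $3$ surgically eliminates every value but the desired one, so no explicit computation of the connecting maps is required.
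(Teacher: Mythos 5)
Your proof is correct and follows essentially the same route as the paper: the same principal $\Sym_p$-block filtration with $V\sim \Sp(3,1)|\Sp(2,2)|\Sp(2,1,1)$ and $W\sim \Sp(3,1)|\Sp(2,1,1)$, the same upper bound $F_p(p,2,2)\leq 3$ from Corollary 1.3.6 together with Lemmas 2.1 and 2.4, and the same conclusion that $Y=\Sp(2,2)=\FF_p(p,2,2)$. The only cosmetic difference is that you phrase the identification of $Y$ via the ``sparse set'' $\{2,5,8\}$ of possible dimensions, whereas the paper goes directly from $\Sp(2,2)\subseteq Y$ and $\dim Y\leq 3$ to $Y=\Sp(2,2)$; these are the same observation.
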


\begin{proof} By Corollary 1.3.6 we have
$$F_p(p,2,2)\leq F_p(p-1,2,2)+F_p(p,2,1)$$
 and by Lemmas  2.1 and 2.4 we get               
$F_p(p,2,2)\leq3$.

\q Now we consider the principal block $\Sym_p\times\Sym_4$-filtration of $\Sp(p,2,2)$. This has the form
$$\begin{array}{ccc}
\Sp(p)\otimes U\cr
\hrulefill\cr
\Sp(p-1,1)\otimes V\cr
\hrulefill\cr
\Sp(p-2,1,1)\otimes W\cr
\end{array}$$
with $U\sim \Sp(2,2)$, $V\sim \Sp(3,1)|\Sp(2,2)|\Sp(2,1,1)$ and\\
 $W\sim \Sp(3,1)|\Sp(2,1,1)$.

\q Since $p\geq 5$ the Specht modules $\Sp(3,1),\Sp(2,2)$ and $\Sp(2,1,1)$ are simple $\Sym_4$-modules. Furthermore, we  have  $\dim(3,1)=3$, $\dim(2,2)=2$ and $\dim(2,1,1)=3$.  

\q  Let  $X$ be the $\Sym_p\times \Sym_{p+1}$-module corresponding to the   bottom two sections of the above filtration. Thus we have an exact sequence 
\begin{align*}
0&\rightarrow  \Sp(p-2,1,1)\otimes(\Sp((3,1))\oplus \Sp(2,1,1))\rightarrow X\cr
&\rightarrow\Sp(p-1,1)\otimes(\Sp(3,1)\oplus\Sp(2,2)\oplus\Sp(2,1,1))\rightarrow 0.
\end{align*}
Let $Y=\FF_p(X)$. Then, $Y\leq \FF_p(p,2,2)$. By Lemma 1.4.1 we get an exact sequence,
  \begin{align*}
&0\rightarrow Y\rightarrow \Sp(3,1)\oplus\Sp(2,2)\oplus\Sp(2,1,1) \cr
&\rightarrow\Sp(3,1)\oplus \Sp(2,1,1).
\end{align*}
Since $\Hom_{\Sym_4}(\Sp(2,2),\Sp(3,1))=\Hom_{\Sym_4}(\Sp(2,2),\Sp(2,1,1))=0$ we get  that $\Sp(2,2)$ embeds in  $Y$.  Moreover, since $\dim Y\leq3 $ we conclude that $Y=\Sp(2,2)$.

\q Now we consider the short  exact sequence
$$0\rightarrow X \rightarrow \Sp(p,2,2)\rightarrow \Sp(p)\otimes \Sp(2,2)\rightarrow 0.$$
Applying $\FF_p$,  we get an  exact sequence
$$0\rightarrow Y \rightarrow \FF_p(p,2,2)\rightarrow \Sp(2,2).$$

Since $F_p(p,2,2)\leq 3$ and $\Sp(2,2)$ is an irreducible $\Sym_4$-module of dimension $2$ we have  $\FF_p(p,2,2)=Y=\Sp(2,2)$ and  we are done.

\end{proof}

\begin{lemma} We have $F_p(p,3,2)=\dim(3,2)$.
\end{lemma}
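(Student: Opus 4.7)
The plan is to follow the methodology of Lemma 2.5: combine an upper bound from Corollary 1.3.6 with a lower bound from characteristic zero, and then use the standard principal $\Sym_p$-block filtration of $\Sp(p,3,2)$ to pin the value down exactly.

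First I will compute the two bounds. The partition $(p,3,2)$ has removable nodes $(1,p), (2,3), (3,2)$ with residues $p-1, 1, p-1$; since two of these coincide, Corollary 1.3.6 gives only the upper bound
$$F_p(p,3,2) \leq F_p(p-1,3,2) + F_p(p,2,2) + F_p(p,3,1) = 0 + 2 + 6 = 8,$$
where I use Lemma 2.1 for the first term (since $(p-1,3,2)$ is not of the form $(p-1)^l a$) and Lemmas 2.4 and 2.5 for the other two. For the lower bound, Remark 1.3.7 together with Frobenius reciprocity and the LRR for the skew shape $(p,3,2)/(p)$ gives $\dim_{\mathbb{Q}} \Sp_{\mathbb{Q}}(p,3,2)^{\Sym_p} = \dim \Sp_{\mathbb{Q}}(3,2) = 5$, so $F_p(p,3,2) \geq 5$.

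Next I will apply LRR to compute the skew Specht modules entering the standard principal $\Sym_p$-block filtration. With $U = \Sp((p,3,2)/(p)) \cong \Sp(3,2)$ on top, one obtains the middle and bottom sections
\begin{align*}
V &= \Sp((p,3,2)/(p-1,1)) \sim \Sp(4,1)\,|\,\Sp(3,2)\,|\,\Sp(3,2)\,|\,\Sp(3,1,1)\,|\,\Sp(2,2,1), \\
W &= \Sp((p,3,2)/(p-2,1,1)) \sim \Sp(4,1)\,|\,\Sp(3,2)\,|\,\Sp(3,1,1)\,|\,\Sp(2,2,1),
\end{align*}
of dimensions $\dim V = 25$ and $\dim W = 20$. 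The methodology described in Section 1.5 then produces the exact sequence $0 \to Y \to V \to W$ with $Y = \FF_p(X)$, together with the top-section sequence $0 \to Y \to \FF_p(p,3,2) \to U$.

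The main calculation is the combined dimensional and block analysis. For $p \geq 5$ the partitions $(3,2)$ and $(2,2,1)$ are their own $p$-cores (they have no $p$-hooks), so by Nakayama both $\Sp(3,2)$ and $\Sp(2,2,1)$ are simple and each lies in a singleton block of $\Sym_5$. Since $U = \Sp(3,2)$ is simple, the image of $\FF_p(p,3,2) \to U$ is either $0$ or $U$; the latter would force $\dim \FF_p(p,3,2) \geq \dim Y + 5 \geq 10$, contradicting the upper bound $8$. Hence $\FF_p(p,3,2) = Y$ and $\dim Y \leq 8$. The block map on the $\Sp(3,2)$-component is $\Sp(3,2)^{\oplus 2} \to \Sp(3,2)$, whose kernel is $\Sp(3,2)$ or $\Sp(3,2)^{\oplus 2}$ (dimension $5$ or $10$), and on the $\Sp(2,2,1)$-component the kernel is $0$ or $\Sp(2,2,1)$ (dimension $0$ or $5$). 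The constraint $\dim Y \leq 8$ selects the $\Sp(3,2)$-block kernel of dimension $5$ and the zero kernel on the $\Sp(2,2,1)$-block. For $p \geq 7$ the partitions $(4,1)$ and $(3,1,1)$ are likewise $p$-cores in distinct singleton blocks of dimensions $4$ and $6$; the bound $\dim Y \leq 8$ forces both remaining block maps to be isomorphisms, giving $\dim Y = 5$ and hence $F_p(p,3,2) = 5 = \dim \Sp(3,2)$.

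The main obstacle is the case $p = 5$: here $(4,1)$ and $(3,1,1)$ share the empty $5$-core and thus lie together in the non-singleton principal block of $\Sym_5$, with non-simple Specht modules. The argument above still restricts the kernel contribution from this block to dimension at most $3$, but pinning it down to $0$ requires a closer look at the socle and composition series of the principal-block component of $V$ and of the induced map into that of $W$; I expect this to close the case by inspection of the relevant short exact sequences of simple factors, but it is the most delicate step.
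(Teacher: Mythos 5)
Your proposal reproduces the paper's proof up to the final step, but the gap you concede at $p=5$ is a genuine one, and it is exactly there that the paper's argument differs from yours. For $p\geq 7$ the modules $\Sp(4,1)$, $\Sp(3,2)$, $\Sp(3,1,1)$, $\Sp(2,2,1)$ are all simple and lie in singleton blocks of $\Sym_5$, so $V$ and $W$ are semisimple and your block-by-block dimension count closes the case. For $p=5$, however, $\Sp(4,1)$ and $\Sp(3,1,1)$ are non-simple and lie in the common principal block, so the semisimplicity argument does not apply and your analysis only bounds the kernel contribution from that block by $3$; you do not show it is zero.

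The paper avoids any case split by invoking Lemma 1.3.2(ii),(iii) in place of semisimplicity. Write $\bar V\sim\Sp(4,1)\vert\Sp(3,1,1)$ and $\bar W\sim\Sp(4,1)\vert\Sp(3,1,1)$ for the relevant block components, with submodules $A\cong\Sp(3,1,1)\leq\bar V$ and $C\cong\Sp(3,1,1)\leq\bar W$ (the $\Sp(3,1,1)$ sits at the bottom of the filtration because $(3,1,1)<(4,1)$ in dominance order). By Lemma 1.3.2(ii) there is no nonzero homomorphism $\Sp(3,1,1)\to\Sp(4,1)$, so the composite $A\to\bar W\to\bar W/C$ vanishes and the map $\bar V\to\bar W$ restricts to a map $A\to C$. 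By Lemma 1.3.2(iii), $\End_{\Sym_5}(\Sp(3,1,1))=k$, so that restriction is zero or an isomorphism; zero would put the $6$-dimensional $\Sp(3,1,1)$ inside the at most $3$-dimensional $\bar Y$, so it is an isomorphism. The induced map $\bar V/A\to\bar W/C$ is then $\Sp(4,1)\to\Sp(4,1)$, again zero or an isomorphism; zero would make the image of $\bar V\to\bar W$ equal to $C$ with a $4$-dimensional kernel, again too large. Hence $\bar V\to\bar W$ is an isomorphism, $\bar Y=0$, and $\dim Y=5$ for every $p\geq 5$. If you adopt this argument your separate treatment of $p\geq 7$ is unnecessary, and the characteristic-zero lower bound from Remark 1.3.7 is also superfluous, since the required bound $\dim Y\geq 5$ already comes from the $(3,2)$-block component.
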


\begin{proof}  By Corollary 1.3.6 we have 
$$F_p(p,3,2)\leq F_p(p-1,3,2)+F_p(p,2,2)+F_p(p,3,1)$$
 and from  Lemmas 2.1, 2.4 and 2.5 we get $F_p(p,3,2)\leq 8$.

\q Now we consider the principal block $\Sym_p\times \Sym_5$-filtration 
$$\begin{array}{ccc}
\Sp(p)\otimes U\cr
\hrulefill\cr
\Sp(p-1,1)\otimes V\cr
\hrulefill\cr
\Sp(p-2,1,1)\otimes W\cr
\end{array}$$
of $\Sp(p,3,2)$. From LRR we have
$U\sim \Sp(3,2)$, \\
$V\sim \Sp(4,1)|\Sp(3,2)|\Sp(3,2)|\Sp(3,1,1)|\Sp(2,2,1)$ and \\
 $W\sim \Sp(4,1)|\Sp(3,2)|\Sp(3,1,1)|\Sp(2,2,1)$. Using the hook  formula for dimensions we have that $\dim(4,1)=4$, $\dim(3,2)=5$,    $\dim(3,1,1)=6$ and $\dim(2,2,1)=5$.  
  
\q Let  $X$ be the $\Sym_p\times \Sym_{5}$-module corresponding to  the bottom two  sections of the above filtration and  $Y=\FF_p(X)$. Then $Y\leq \FF_p(p,3,2)$. By Lemma 1.4.1 we get an exact sequence
 $$0\rightarrow Y\rightarrow V\rightarrow W.$$

Since the partition $(3,2)$ is a $p$-core we see, by block considerations, that the kernel of the map $V\to W$ contains a copy of $\Sp(3,2)$.
Since $\dim(3,2)=5$ and $\dim Y\leq 8$ we can write $Y=Y_0\oplus {\bar Y}$, where ${\bar Y}$ has dimension at most $3$.  Moreover, by block considerations, we can write $V=V_0\oplus V_1\oplus {\bar V}$ and $W=W_0\oplus W_1\oplus {\bar W}$,  with $V_0=\Sp(3,2)\oplus \Sp(3,2)$, $V_1=\Sp(2,2,1)$, $W_0=\Sp(3,2)$, $W_1=\Sp(2,2,1)$.  Moreover, there are   short exact sequences of $S_5$-modules $0\to A\to {\bar V}\to B\to 0$, $0\to C\to {\bar W}\to D\to 0$ with $A=C=\Sp(3,1,1)$, $B=D=\Sp(4,1)$.  Further, since $\dim {\bar Y}\leq 3$ and $\Sp(3,2)$ and $\Sp(2,2,1)$ are $5$-dimensional modules the map $Y\to V$ takes ${\bar Y}$ into ${\bar V}$ and we get an exact sequence $0\to {\bar Y}\to {\bar V}\to {\bar W}$.  Now the composite  $A\to {\bar W}\to D$ is $0$ by Lemma 1.3.2(ii) so that the map ${\bar V}\to {\bar W}$ restricts  to give a map $A\to C$. By Lemma 1.3.2 (iii) this is either $0$ or an isomorphism. But if this is the $0$ map then ${\bar Y}$ contains a copy of $\Sp(3,1,1)$. But this is impossible since $\dim {\bar Y}\leq 3$. Hence the map $V\to W$ induces an isomorphism $A\to C$ and we get an induced map ${\bar V}/A\to {\bar W}/C$. Once more, by Lemma 1.3.2 (iii) we find that this must be an isomorphism for otherwise we would have that the map ${\bar V}\to {\bar W}$ would have image $C$ and kernel of dimension greater than $3$. The map  ${\bar V}\to {\bar W}$ is an isomorphism and ${\bar Y}=0$ and so $Y=\Sp(3,2)$ is $5$ dimensional.

\q Now we consider the short  exact sequence
$$0\rightarrow X \rightarrow \Sp(p,3,2)\rightarrow \Sp(p)\otimes \Sp(3,2)\rightarrow 0$$
of $\Sym_p\times \Sym_5$-modules.  Applying $\FF_p$,  we get the exact sequence
$$0\rightarrow Y \rightarrow \FF_p(p,3,2)\rightarrow \Sp(3,2).$$ 
Since $\Sp(3,2)$ is irreducible we have that either $\FF_p(p,3,2)=Y=\Sp(3,2)$ or $\FF_p(p,3,2)=\Sp(3,2)\oplus\Sp(3,2)$. But $F_p(p,3,2)\leq 8 $ and so the latter  is impossible. Hence $X=Y=\Sp(3,2)$ and so we are done.
\end{proof}

\q Our next target is to calculate the  $F_p(p,a,2)$ for $a\leq4\leq p-2$.   Perhaps surprisingly, we find it convenient to first treat  modules of the form $\Sp(p,a,3)$ and then use this knowledge to deal with those of the form $\Sp(p,a,2)$.

\begin{lemma} For $3\leq a\leq p-1$ we have
$$
F_p(p,a,3)=
 \begin{cases} 
 \sum\limits_{i=3}^a F_p(p,i,2), &\mbox{if } a\leq p-2;\cr
\sum\limits_{i=3}^{p-1}F_p(p,i,2)+1, & \mbox{if } a=p-1 .
\end{cases} $$
\end{lemma}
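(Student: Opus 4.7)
The plan is to prove this by straightforward induction on $a$, using Corollary 1.3.6 to get a recursion and Lemma 2.1 to evaluate the ``boundary'' term arising from removal of the top-right node. The whole argument stays within the elementary framework, and does not require the more elaborate principal $\Sym_p$-block filtration analysis used in Lemmas 2.5 and 2.6.

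First I would identify the removable nodes of $\lambda=(p,a,3)$. When $a=3$ the partition has two removable nodes, namely $(1,p)$ and $(3,3)$ of residues $p-1$ and $0$. When $3<a\le p-1$ there are three removable nodes $(1,p)$, $(2,a)$ and $(3,3)$ of residues $p-1$, $a-2$ and $0$ respectively. Since $a-2\in\{1,\dots,p-3\}$ and $p\ge 5$, all residues involved are pairwise distinct. Hence the equality clause of Corollary 1.3.6 applies and gives
\begin{equation*}
F_p(p,a,3)=F_p(p-1,a,3)+F_p(p,a-1,3)+F_p(p,a,2)
\end{equation*}
for $3<a\le p-1$, and $F_p(p,3,3)=F_p(p-1,3,3)+F_p(p,3,2)$ in the base case.

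Next I would evaluate $F_p(p-1,a,3)$ via Lemma 2.1. This partition has first part $p-1$, so the lemma applies: $F_p$ vanishes unless the partition has the shape $((p-1)^l,b)$ with $l\ge 1$ and $0\le b<p-1$, in which case it equals $1$. For $3\le a\le p-2$ the partition $(p-1,a,3)$ is not of this shape (the second part is not equal to $p-1$), so $F_p(p-1,a,3)=0$. For $a=p-1$, the partition $(p-1,p-1,3)=((p-1)^2,3)$ is of this shape (using $p\ge 5$ so $3<p-1$), and thus $F_p(p-1,p-1,3)=1$.

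Finally, I would combine these inputs by induction on $a$. The base case $a=3$ gives $F_p(p,3,3)=F_p(p,3,2)=\sum_{i=3}^{3}F_p(p,i,2)$. For the inductive step with $3<a\le p-2$, the recursion collapses to $F_p(p,a,3)=F_p(p,a-1,3)+F_p(p,a,2)$, and the inductive hypothesis yields $\sum_{i=3}^{a}F_p(p,i,2)$. For $a=p-1$ the extra contribution of $1$ from $F_p(p-1,p-1,3)$ produces exactly the $+1$ in the second branch of the stated formula. There is no real obstacle here: the only point that has to be checked carefully is that the removable nodes of $(p,a,3)$ have pairwise distinct residues throughout the range $3\le a\le p-1$, which is where the hypothesis $p\ge 5$ is used in an essential way.
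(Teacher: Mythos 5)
Your proposal is correct and follows essentially the same route as the paper: induction on $a$, Corollary 1.3.6 applied to the removable nodes $(1,p)$, $(2,a)$, $(3,3)$ (whose residues $p-1$, $a-2$, $0$ are distinct), and Lemma 2.1 to kill or evaluate the term $F_p(p-1,a,3)$. Your write-up is somewhat more explicit about where the $+1$ arises when $a=p-1$, but the argument is the same one the paper gives.
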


\begin{proof}
 We prove the statement by induction on $a$. For $a=3$ the partition $(p,3,3)$ has the two removable nodes $(1,p)$ and $(3,3)$ with residues $p-1$ and $0$ respectively. Thus, by Corollary 1.3.6 we have 
 $$
 F_p(p,3,3)=F_p(p-1,3,3)+ F_p(p,3,2).
 $$
  By Lemma 2.1 we have that $F_p(p-1,3,3)=0$ and so  $F_p(p,3,3)=F_p(p,3,2)$,  as required.
 
 \q Now suppose  $3<a\leq p-1$. The partition $(p,a,3)$ has three removable nodes $(1,p)$,  $(2,a)$ and $(3,3)$ with the distinct residues $p-1$, $a-2$ and $0$ respectively.  Thus, by Corollary 1.3.6 we have 
 $$F_p(p,a,3)=F_p(p-1,a,3)+F_p(p,a-1,3)+F_p(p,a,2).$$
   By Lemma 2.1 and induction on $a$ we immediately get  the result.
\end{proof}

\begin{lemma} For $3\leq a\leq p$ we have
$$\dim(a,3)=\sum\limits_{i=3}^a\dim(i,2).$$
\end{lemma}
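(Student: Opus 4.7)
The plan is to prove this purely combinatorial identity by induction on $a$, using the Branching Theorem (Theorem 1.3.5) to provide a recursion that makes the induction essentially automatic. The identity concerns the ordinary (characteristic zero) dimensions of the Specht modules $\Sp(a,3)$ and $\Sp(i,2)$, which are the same as the dimensions of the Specht modules over $k$, so the branching statement applies equally well here. No subtleties about blocks or filtrations beyond dimension counting are needed.

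For the base case $a=3$, the partition $(3,3)$ has a single removable node, namely $(2,3)$. By Theorem 1.3.5(ii), $\Res_{\Sym_5}\Sp(3,3) \cong \Sp(3,2)$, so $\dim(3,3) = \dim(3,2)$, which matches the asserted formula (the right-hand side reducing to the single term $\dim(3,2)$).

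For the inductive step, I take $4 \leq a \leq p$ and assume the identity for $a-1$. The partition $(a,3)$ has the two removable nodes $(1,a)$ and $(2,3)$, yielding the partitions $(a-1,3)$ and $(a,2)$ respectively. By Theorem 1.3.5(i), the restriction $\Res_{\Sym_{a+2}}\Sp(a,3)$ admits a Specht filtration with sections $\Sp(a-1,3)$ and $\Sp(a,2)$. Since restriction preserves dimension and dimensions are additive on filtrations,
$$\dim(a,3) = \dim(a-1,3) + \dim(a,2).$$
Applying the inductive hypothesis $\dim(a-1,3) = \sum_{i=3}^{a-1}\dim(i,2)$ gives the result.

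There is no real obstacle here: the whole argument reduces to recognising that Theorem 1.3.5 supplies exactly the recursion $\dim(a,3) = \dim(a-1,3) + \dim(a,2)$ for $a \geq 4$, together with the boundary value $\dim(3,3) = \dim(3,2)$. Alternatively, one could bypass the Branching Theorem and verify the identity directly via the hook length formula, but the inductive approach above is both shorter and more in keeping with the flavour of the arguments used elsewhere in this section.
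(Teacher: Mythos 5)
Your proof is correct and takes essentially the same route as the paper, which simply says ``This follows from the branching rule and induction''; you have spelled out the base case and the recursion $\dim(a,3)=\dim(a-1,3)+\dim(a,2)$ explicitly, but the underlying idea is identical.
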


\begin{proof}
This follows from the branching rule and induction.

\end{proof}

\begin{lemma} For $2\leq a\leq p-2$  we have 
$$F_p(p,a,2)=\dim(a,2).$$
 \end{lemma}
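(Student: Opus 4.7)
We argue by induction on $a$, the base cases $a=2$ and $a=3$ being Lemmas 2.5 and 2.6 respectively.  Fix $4\le a\le p-2$ and assume the result for all $3\le i<a$.  The lower bound $F_p(p,a,2)\ge\dim(a,2)$ is immediate from Remark 1.3.7: in characteristic zero the $\Sym_p\times\Sym_{a+2}$-module filtration of Lemma 1.3.9 splits, and only the section with $\mu=(p)$ is trivial under $\Sym_p$, giving $\dim_\que\Sp_\que(a,2)=\dim(a,2)$ invariants in $\Sp_\que(p,a,2)$.

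For the matching upper bound, we follow the hint in the paragraph preceding the lemma and reduce to an upper bound on $F_p(p,a,3)$.  The removable nodes of $(p,a,3)$ have the three distinct residues $p-1$, $a-2$ and $0$ for $4\le a\le p-2$, so Corollary 1.3.6 together with Lemma 2.1 gives the identity $F_p(p,a,3)=F_p(p,a-1,3)+F_p(p,a,2)$ (Lemma 2.7).  Iterating this and applying the inductive hypothesis together with Lemma 2.8,
\[
F_p(p,a-1,3)=\sum_{i=3}^{a-1}F_p(p,i,2)=\sum_{i=3}^{a-1}\dim(i,2)=\dim(a-1,3),
\]
and since Lemma 2.8 also gives $\dim(a,3)=\dim(a-1,3)+\dim(a,2)$, the target inequality $F_p(p,a,2)\le\dim(a,2)$ becomes equivalent to $F_p(p,a,3)\le\dim(a,3)$.

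To establish $F_p(p,a,3)\le\dim(a,3)$ we would adapt the principal-block-filtration technique of Lemmas 2.5 and 2.6 to $\Sp(p,a,3)$.  Its principal $\Sym_p$-block filtration has sections $\Sp(p)\otimes\Sp(a,3)$, $\Sp(p-1,1)\otimes V$ and $\Sp(p-2,1,1)\otimes W$, with $V=\Sp((p,a,3)/(p-1,1))$ and $W=\Sp((p,a,3)/(p-2,1,1))$ decomposed as sums of Specht modules for $\Sym_{a+3}$ via the Littlewood--Richardson rule.  Writing $X$ for the submodule given by the two lower sections and setting $Y:=\FF_p(X)$, Lemma 1.4.1 together with left-exactness of $\FF_p$ yields an exact sequence $0\to Y\to V\to W$.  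Decomposing $V$ and $W$ into $p$-blocks by Nakayama's conjecture and analysing the restriction of $V\to W$ to each block via Lemma 1.3.2(ii),(iii) just as in the proof of Lemma 2.6, we pin down $Y$.  Lifting through the short exact sequence $0\to X\to\Sp(p,a,3)\to\Sp(p)\otimes\Sp(a,3)\to0$ gives $0\to Y\to\FF_p(p,a,3)\to\Sp(a,3)$, and comparing with the lower bound $F_p(p,a,3)\ge\dim(a,3)$ from Remark 1.3.7 forces $F_p(p,a,3)=\dim(a,3)$.

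The main obstacle is the explicit Littlewood--Richardson computation of $V$ and $W$: for general $a\ge 4$ these decompositions contain several more Specht summands than in the $a=3$ case of Lemma 2.6, so the block-by-block matching of summands under $V\to W$ is combinatorially more intricate, especially at the edge $a=p-2$ where Specht modules of $\Sym_{p+1}$ may fall into non-singleton $p$-blocks.  Once the combinatorics are worked out, the structural outcome should mirror Lemma 2.6: a single distinguished copy of $\Sp(a,3)$ in $V$ (note that $(a,3)$ is a $p$-core since its largest hook length is $a+1\le p-1$) survives in $Y=\ker(V\to W)$ and contributes $\dim(a,3)$ to $\FF_p(p,a,3)$, while the remaining Specht summands of $V$ pair off with those of $W$ under isomorphisms.
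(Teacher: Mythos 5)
Your overall plan correctly anticipates the paper's mechanism (induction, passing to $F_p(p,a,3)$ via Lemmas 2.7 and 2.8, the principal $\Sym_p$-block filtration, LRR decomposition of $V$ and $W$, the exact sequence $0\to Y\to V\to W$ from Lemma 1.4.1, and the special care required when $a=p-2$). However, there is a genuine gap in the logic. You supply the \emph{lower} bound $F_p(p,a,3)\ge\dim(a,3)$ from Remark 1.3.7 and then assert that "comparing with the lower bound forces $F_p(p,a,3)=\dim(a,3)$." That inference is unsound: what you need to establish is the \emph{upper} bound $F_p(p,a,3)\le\dim(a,3)$, and the lower bound contributes nothing toward it. The block analysis by itself only shows that $Y$ \emph{contains} a copy of $\Sp(a,3)$ (since $(a,3)$ is a $p$-core and occurs in $V$ with multiplicity one more than in $W$); it does not, absent further input, rule out that other summands of $V$ survive in $Y$, nor that the final map $\FF_p(p,a,3)\to\Sp(a,3)$ coming from the top section $\Sp(p)\otimes\Sp(a,3)$ is nonzero.

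The missing ingredient is the a priori \emph{upper} bound obtained from Corollary 1.3.6 and the inductive hypothesis: removing nodes from $(p,a,2)$ gives
$F_p(p,a,2)\le F_p(p-1,a,2)+F_p(p,a-1,2)+F_p(p,a,1)=0+\dim(a-1,2)+a(a+1)/2=a^2-1$,
hence $F_p(p,a,3)\le\dim(a-1,3)+a^2-1$ via the $(\dagger)$-relation. This bound is used twice in the paper and both uses are indispensable: first, after writing $Y=\Sp(a,3)\oplus\bar Y$, it gives $\dim\bar Y\le a^2-1-\dim(a,2)=\dim(a-1,1,1)$, and one checks that every Specht section of $V$ has dimension strictly larger than $\dim(a-1,1,1)$, which forces $\bar Y=0$ (the edge case $a=p-2$ needs the extra argument as in Lemma 2.6, but still relies on this dimension comparison); second, when lifting through $0\to X\to\Sp(p,a,3)\to\Sp(p)\otimes\Sp(a,3)\to 0$, the only alternatives are $\FF_p(p,a,3)=Y$ or $\FF_p(p,a,3)=\Sp(a,3)\oplus\Sp(a,3)$, and the latter is excluded because $\dim(a,3)>\dim(a-1,1,1)\ge F_p(p,a,3)-\dim Y$. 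With that upper bound in hand, the char-$0$ lower bound you introduce becomes superfluous: the conclusion $F_p(p,a,3)=\dim(a,3)$ follows directly from $Y=\Sp(a,3)$ and $\FF_p(p,a,3)=Y$.
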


 \begin{proof} The statement is true for $a=2$ and $a=3$ by   Lemmas 2.5 and 2.6.  So now suppose  $4\leq a\leq p-2$ and write $r=\deg(\lambda)=p+a+2$.  By Corollary 1.3.6 we have 
$$
F_p(p,a,2)\leq F_p(p-1,a,2)+F_p(p,a-1,2) +F_p(p,a,1).
$$
 By Lemma 2.1, Lemma 2.2 and induction on $a$ we get  
 $$F_p(p,a,2)\leq\dim(a-1,2)+a(a+1)/2=a^2-1.$$

\q We will find $F_p(p,a,2)$ by calculating the dimension $F_p(p,a,3)$ and using induction. More precisely, by Lemmas  2.7,  2.8 and induction on $a$ we have 
\begin{align*}
F_p(p,a,2)&=F_p(p,a,3)-\sum_{i=3}^{a-1}F_p(p,i,2)\cr
&=F_p(p,a,3)-\sum_{i=3}^{a-1}\dim(i,2)\ \ \ \ \ \ \ \ \ \ \ (\dagger)\cr
&=F_p(p,a,3)-\dim(a-1,3).
\end{align*}
Since $F_p(p,a,2)\leq a^2-1$ we have the  upper bound  
$$F_p(p,a,3)\leq\dim(a-1,3)+a^2-1.$$
\q Consider now the $\Sym_p\times\Sym_{a+3}$-filtration of $\Sp(p,a,3)$. This has the form
$$\begin{array}{ccc}
\Sp(p)\otimes U\cr
\hrulefill\cr
\Sp(p-1,1)\otimes V\cr
\hrulefill\cr
\Sp(p-2,1,1)\otimes  W\cr
\end{array}$$
with $U\sim \Sp(a,3)$, 
$$V\sim \Sp(a+1,2) |\Sp(a,3)| \Sp(a,3)| \Sp(a,2,1)|\Sp(a-1,4)|\Sp(a-1,3,1)$$
 and 
 $$W\sim \Sp(a+1,2) | \Sp(a,3)|  \Sp(a,2,1)|\Sp(a-1,4)| \Sp(a-1,3,1)| \Sp(a-1,2,2)$$
(where  the section $\Sp(a-1,4)$ of $V$ and $W$ is omitted when $a=4$.)

\q Let  $X$ be the $\Sym_p\times \Sym_{a+3}$-module corresponding to  the  bottom  two sections of the above filtration. Thus   we have a short exact sequence,
$$0\rightarrow \Sp(p-2,1,1)\otimes  W\rightarrow X\rightarrow\Sp(p-1,1)\otimes V\rightarrow 0.$$
Let  $Y=\FF_p(X)$. Then, $Y \leq \FF_p(p,a,3)$. By Lemma 1.4.1 we get an exact sequence
$$0\rightarrow Y\rightarrow V\to W\eqno{(*)}.$$
We consider the map $V\to W$. The partition $(a,3)$ is a $p$-core and so by block considerations  one copy of the Specht module $\Sp(a,3)$ of $V$ must be in the kernel and we may write $Y=\Sp(a,3)\oplus {\bar Y}$, for some submodule ${\bar Y}$ of $Y$. We have 
\begin{align*}\dim {\bar Y}=
\dim Y -\dim(a,3)&\leq (a^2-1)+\dim(a-1,3)-\dim(a,3)\cr
&=(a^2-1)-\dim(a,2)\cr
&=a(a-1)/2\cr
&=\dim(a-1,1,1).
\end{align*}

\q It is an exercise in the use of the hook formula (which we leave to the reader) to check that $\dim \Sp(\lambda)> a(a-1)/2$ for all sections $\Sp(\lambda)$ of $V$, i.e., for $\lambda\in S=\{(a+1,2),(a,3),(a,2,1),(a-1,4), (a-1,3,1)\}$.  We have $a\leq p-2$ so that $a+3\leq p+1$. If $a+3\leq p$ then all elements of $S$ are $p$-cores so that $V$ is a direct sum of  simple modules $\Sp(\lambda)$, $\lambda\in S$.  Now any composition factor of ${\bar Y}$ is a composition factor of $V$ and since $\dim \Sp(\lambda)> \dim {\bar Y}$, for all $\lambda\in S$, we must have ${\bar Y}=0$. 

\q We now consider the case $a=p-2$. Then, for $\lambda\in S$,  we have that $\lambda$ is a $p$-core except for $\lambda= (p-1,2),(p-2,2,1)$. 
Hence any composition factor of ${\bar Y}$ must be a composition factor of $\Sp(p-1,2)$ or $\Sp(p-2,2,1)$.  Taking block components in $(*)$ we obtain a short exact sequence 
$$0\to {\bar Y} \to {\bar V}\to {\bar W}$$
where ${\bar V} \sim \Sp(p-1,2) \vert \Sp(p-2,2,1)$ and ${\bar W}\sim \Sp(p-1,2) \vert \Sp(p-2,2,1)$.  Arguing as in the proof  of Lemma 2.6 we obtain that ${\bar V}\to {\bar W}$ is an isomorphism and ${\bar Y}=0$.   Hence (in all cases) we have $Y=\Sp(a,3)$.

\q We  consider the exact sequence
$$0\rightarrow X\rightarrow \Sp(p,a,3)\rightarrow \Sp(p)\otimes U\rightarrow 0.$$
Applying  $\FF_p$  we get the exact sequence,
$$0\rightarrow Y \rightarrow \FF_p(p,a,3)\rightarrow \Sp(a,3).$$ 
Since  $(a,3)$ is a core   either $\FF_p(p,a,3)=Y=\Sp(a,3)$ or $\FF_p(p,a,3)=\Sp(a,3)\oplus\Sp(a,3)$. But 
$$F_p(p,a,3)-\dim Y \leq \dim(a-1,1,1)$$
 and $\dim(a,3)> \dim(a-1,1,1)$. Hence $X=Y=\Sp(a,3)$.

\q Finally from  the relation $(\dagger)$ we get  
\begin{align*}
F_p(p,a,2)&=F_p(p,a,3)-\dim(a-1,3)\cr
&=\dim(a,3)-\dim(a-1,3)\cr
&=\dim(a,2)
\end{align*}
and  we are done.

\end{proof}


\begin{lemma}

We have $F_p(p,p-1,2)=\dim(p-1,2)+\dim(p,1)$.

\end{lemma}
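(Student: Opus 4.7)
The plan is to apply the principal-$\Sym_p$-block methodology of Section~1.5 to $\lambda=(p,p-1,2)$. First, I obtain a preliminary upper bound via Corollary~1.3.6: the removable nodes $(1,p)$, $(2,p-1)$, $(3,2)$ of $\lambda$ have residues $p-1$, $p-3$, $p-1$ respectively, so (since $(1,p)$ and $(3,2)$ share residue) only the inequality
\[F_p(p,p-1,2) \leq F_p(p-1,p-1,2)+F_p(p,p-2,2)+F_p(p,p-1,1) = 1+\dim(p-2,2)+p(p-1)/2+1 = p^2-2p+2\]
holds, by Lemmas~2.1, 2.9, 2.4. This is loose relative to the target $p+\dim(p-1,2)=(p-1)(p+2)/2$. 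For the lower bound, Remark~1.3.7 applied to the characteristic-zero lift gives $F_p(p,p-1,2)\geq \dim\Sp_\que(p-1,2)=\dim(p-1,2)$ (as $\Sp_\que((p,p-1,2)/(p))=\Sp_\que(p-1,2)$), while the principal-block analysis below will produce $\Sp(p,1)\subseteq \FF_p(\lambda)$. Since $(p,1)$ is a $p$-core whereas $(p-1,2)$ has $p$-core $(1)$, the two contributions live in distinct $\Sym_{p+1}$-blocks and are independent, yielding the target lower bound $F_p(p,p-1,2)\geq p+\dim(p-1,2)$.

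Next I analyze the principal-$\Sym_p$-block filtration $\Sp(p)\otimes U\,|\,\Sp(p-1,1)\otimes V\,|\,\Sp(p-2,1,1)\otimes W$ of $\Sp(\lambda)$. Direct application of the LRR (Lemma~1.3.9(ii)) yields $U\cong \Sp(p-1,2)$, the module $V$ with Specht sections $\Sp(p,1),\Sp(p-1,2),\Sp(p-1,2),\Sp(p-2,3),\Sp(p-1,1,1),\Sp(p-2,2,1)$, and $W$ with Specht sections $\Sp(p-1,2),\Sp(p-2,3),\Sp(p-1,1,1),\Sp(p-2,2,1)$. The partitions $(p,1),\,(p-2,3),\,(p-1,1,1)$ are $p$-cores and hence yield simple $\Sym_{p+1}$-modules lying in their own weight-$0$ blocks, while $(p-1,2),(p-2,2,1)$ share $p$-core $(1)$ and lie in the weight-$1$ block $B$. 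This gives $V=V_A\oplus V_B\oplus V_C\oplus V_D$ and $W=0\oplus W_B\oplus W_C\oplus W_D$, with $V_C=W_C=\Sp(p-1,1,1)$ and $V_D=W_D=\Sp(p-2,3)$. Letting $X$ be the bottom two sections and $Y=\FF_p(X)$, the methodology yields an exact sequence $0\to Y\to V\to W$, giving $V_A=\Sp(p,1)\hookrightarrow Y$ (since $W_A=0$) and the unconditional bound $\dim Y\geq \dim V - \dim W = p+\dim(p-1,2)$.

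To pin down $\dim Y = p+\dim(p-1,2)$, I argue that $V_C\to W_C$ and $V_D\to W_D$ are isomorphisms and $V_B\to W_B$ is surjective. For the weight-$0$ block maps: if $V_C\to W_C$ were zero, then $V_C\subseteq Y$ and $\dim Y\geq p+\dim(p-1,2)+\dim(p-1,1,1) = p^2-1$, exceeding the Corollary~1.3.6 bound $p^2-2p+2$ for $p\geq 5$; similarly for $V_D\to W_D$, the inequality $\dim Y\geq p+\dim(p-1,2)+\dim(p-2,3)$ again exceeds $p^2-2p+2$ for $p\geq 5$. The weight-$1$ block map $V_B\to W_B$ is analyzed as in Lemmas~2.6 and~2.7 using Lemma~1.3.2(ii)(iii), giving surjectivity with kernel of dimension $\dim V_B-\dim W_B=\dim(p-1,2)$. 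Finally, applying $\FF_p$ to $0\to X\to \Sp(\lambda)\to \Sp(p)\otimes U\to 0$ yields $0\to Y\to \FF_p(\lambda)\to U\to R^1\FF_p(X)\to\cdots$; the image of $\FF_p(\lambda)\to U=\Sp(p-1,2)$ is a submodule of $\Sp(p-1,2)$, the only options being $0$, the simple socle $D^{(p)}$, or $\Sp(p-1,2)$ itself. The last is excluded by Corollary~1.3.6 (else $\dim\FF_p(\lambda)\geq p^2-2>p^2-2p+2$). Ruling out the intermediate case image $=D^{(p)}$ is the main obstacle of the proof, and requires examining the connecting map $U\to R^1\FF_p(X)$ and showing it is injective on the socle $D^{(p)}$ of $U$. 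Once this is done, the image is $0$ and $\FF_p(\lambda)=Y$, completing the argument with $\dim\FF_p(\lambda)=\dim\Sp(p,1)+\dim\Sp(p-1,2)$, as required.
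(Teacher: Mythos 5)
Your proposal follows the paper's strategy faithfully through the block decomposition and the computation of $Y$: the Corollary~1.3.6 upper bound $p^2-2p+2$ is correct, the LRR sections of $U$, $V$, $W$ are correct, and the dimension comparisons that pin down the weight-zero block maps $V_C\to W_C$ and $V_D\to W_D$ as isomorphisms work (e.g.\ $p+\dim(p-1,2)+\dim(p-1,1,1)=p^2-1>p^2-2p+2$). The handling of the weight-one block by reference to the argument in Lemmas~2.6--2.7 is the same method the paper uses. (Small slip: the socle of $\Sp(p-1,2)$ is $D^{(p+1)}$, not $D^{(p)}$.)

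However, the proposal stops short precisely where the real work of the lemma lies. You correctly identify that the only remaining possibility to exclude is that the image of $\FF_p(\lambda)\to U=\Sp(p-1,2)$ equals the socle $D^{(p+1)}$, and you say this ``requires examining the connecting map $U\to R^1\FF_p(X)$ and showing it is injective on the socle'' --- but you never do this, and it is not at all routine: this single case occupies more than half of the paper's proof. The paper does not work with $R^1\FF_p(X)$ directly; instead it splits into two subcases according to whether the putative extension $0\to\Sp(p-1,2)\oplus\Sp(p,1)\to\FF_p(\lambda)\to D^{(p+1)}\to 0$ is split or not. In the split case it shows $\dim\Hom_{\Sym_p\times\Sym_{p+1}}(k,\Sp(p,p-1,2))$ would be $2$, and computes it to be at most $1$ by passing through the Schur functor to $\Hom_{\GL_{2p+1}}(S^{p+1}E\otimes S^pE,\nabla(p,p-1,2))$ and then to $\SL_3$ via Cox--Parker. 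In the non-split case it identifies $\FF_p(\lambda)$ with the projective permutation module $M(p-1,2)$, which forces $\dim\Hom_{\Sym_{2p+1}}(M(p,p-1,2),\Sp(p,p-1,2))\geq 3$, and then computes this Hom-space to be exactly $2$ by analysing the relevant block component of $S^pE\otimes S^{p-1}E\otimes S^2E$ as a tilting module. Until an argument of comparable substance is supplied for this exclusion, the proof is incomplete; simply asserting that the connecting map is injective on the socle does not dispose of the difficulty, it renames it.
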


\begin{proof}  By Corollary 1.3.6 we have 
$$
F_p(p,p-1,2)\leq F_p(p-1,p-1,2)+F_p(p,p-2,2)+F_p(p,p-1,1).
$$
From  Lemmas 2.1 and 2.4 and  2.9 we get 
 \begin{align*}
 F_p(p,p-1,2)&\leq\dim(p-2,2)+p(p-1)/2+2\cr
 &=\dim(p-2,2)+\dim(p-1,1,1)+2  {\hskip 50pt (\dagger)}.
 \end{align*}
\q Consider now the principal block $\Sym_p\times\Sym_{p+1}$-filtration of $\Sp(p,p-1,2)$. This has the form  $$\begin{array}{ccc}
\Sp(p)\otimes U\cr
\hrulefill\cr
\Sp(p-1,1)\otimes V\cr
\hrulefill\cr
\Sp(p-2,1,1)\otimes  W\cr
\end{array}$$
with $U\sim \Sp(p-1,2)$,  with 
$$V\sim \Sp(p,1)|\Sp(p-1,2)|\Sp(p-1,2)|\Sp(p-1,1,1) |\Sp(p-2,3)| \Sp(p-2,2,1)$$
  and  with 
  $$W\sim \Sp(p-1,2)|\Sp(p-1,1,1)|\Sp(p-2,3)|\Sp(p-2,2,1).$$

 \q Let  $X$ be the $\Sym_p\times \Sym_{p+1}$-module corresponding to  the bottom two  sections of the above filtration. Thus we have the short exact sequence
$$0\rightarrow \Sp(p-2,1,1)\otimes  W\rightarrow X\rightarrow\Sp(p-1,1)\otimes V\rightarrow 0.$$
Let  $Y=\FF_p(X)$. Then  $Y\leq\FF_p(p,p-1,2)$. By Lemma 1.4.1 we get an exact sequence
 $$0\rightarrow Y\rightarrow V {\rightarrow} W.$$
We consider the map $V\to W$. The partition $(p,1)$ is a $p$-core and so by block considerations the copy of the Specht module $\Sp(p,1)$ of $V$ must be in the kernel of this map and so inside $Y$.  Hence we may write $Y=\Sp(p,1)\oplus {\bar Y}$, for some some submodule ${\bar Y}$ of $Y$. 

\q Consider now the block components ${\bar Y}_0$, ${\bar V}$, ${\bar W}$ of $Y,V,W$ for the block containing $(p-1,2)$ (and $(p-2,2,1)$).  We  have the exact sequence
$$0\to {\bar Y}_0\to {\bar V}\to {\bar W}.$$
We have 
${\bar V}\sim  \Sp(p-1,2)|\Sp(p-1,2)| \Sp(p-2,2,1)$ and\\
 ${\bar W}\sim \Sp(p-1,2)| \Sp(p-2,2,1).$  In particular ${\bar V}$ contains a copy $A$, say,  of $\Sp(p-2,2,1)$ and ${\bar W}$ contains  copy $B$, say, of $\Sp(p-2,2,1)$ with ${\bar V}/A$ and ${\bar W}/B$ filtered by copies of $\Sp(p-1,2)$. By Lemma 1.3.2(ii) the composite  map $A\to {\bar W}/B$ is zero, so the map ${\bar V}\to {\bar W}$ takes $A$ to $B$. The restriction $A\to B$ is either $0$ or an isomorphism. However, if this map is zero we would have a copy of $\Sp(p-2,2,1)$ in  ${\bar Y}_0$ and hence in $Y$. In this case we would have an embedding of  $\Sp(p,1)\oplus \Sp(p-2,2,1)$ in $Y$ and hence
$$\dim(p,1)+\dim(p-2,2,1)\leq \dim(p-2,2)+\dim(p-1,1,1)+2.$$
We show that this is not correct however,  by comparing these dimensions. In particular,  we have
  \begin{align*}
 \dim(p,1)+\dim(p-2,2,1)=1&+\dim(p-1,1)+\dim(p-3,2,1)\cr
 &+\dim(p-2,1,1)+\dim(p-2,2).
 \end{align*}
 Moreover, we have 
 \begin{align*}
 \dim(p-2,2)+\dim(p-1,1,1)+2=&\dim(p-2,2)+\dim(p-2,1,1)\cr
 &+\dim(p-1,1)+2
 \end{align*}
 and so 
  \begin{align*}
 \dim(p,1)+\dim(p-2,2,1)&-(\dim(p-2,2)+\dim(p-1,1,1)+2)\cr
 &=\dim(p-3,2,1)-1>0.
\end{align*}

\q By similar consideration we get that the map ${\bar V}/A\to {\bar W}/B$ is either $0$ or surjective. If this map is zero the the image of ${\bar V}\to {\bar W}$ is $B$ and the dimension of the kernel is $2\dim(p-1,2)$. In this case we would have $\dim Y\geq \dim(p,1)+2\dim(p-1,2)$. We leave it to the reader to check that in fact  $\dim(p,1)+2\dim(p-1,2)$ exceeds the upper bound for $F_p(p,p-1,2)$ (and hence for $\dim Y$) given in ($\dagger$). Hence the map ${\bar V}\to {\bar W}$ is surjective. Since ${\bar V} \sim \Sp(p-1,2)|\Sp(p-1,2)| \Sp(p-2,2,1)$ and  ${\bar W}\sim \Sp(p-1,2)| \Sp(p-2,2,1)$ the composition factors of the  kernel $K$, say, of the map ${\bar V}\to {\bar W}$ are those of $\Sp(p-1,2)$.  Thus $K$ has composition factors $D^{(p-1,2)}, D^{(p+1)}$ (each occurring once).  Moreover, it is easy to check that $\Ext^1_{\Sym_{p+1}}(D^{(p+1)},D^{(p-1,2)})=k$. It follows that $K$ is either $\Sp(p-1,2)$ or has a submodule isomorphic to $D^{(p-1,2)}$.  However, we have $\Hom_{\Sym_{p+1}}(D^{(p-1,2)}, \Sp(p-1,2))=0$ and ${\bar V}/A$ is filtered by copies of $\Sp(p-1,2)$. It follows that any copy of $D^{(p-1,2)}$ is contained in $A$. But the map ${\bar V}\to {\bar W}$ maps $A$ isomorphically to $B$ so that no copy of $D^{(p-1,2)}$ can be contained in the kernel $K$. Hence $K=\Sp(p-1,2)$. Thus  we have ${\bar Y}_0=\Sp(p-1,2)$. 

\q By block considerations, if $Y\neq \Sp(p,1)\oplus {\bar Y}_0$ then $Y$ contains (as a block component) a copy of $\Sp(p-1,1,1)$ or $\Sp(p-2,3)$. We leave it to the reader to check that $\dim(p,1)+\dim(p-1,2)+\dim(\lambda)$ exceed the bound given in ($\dagger$), for $\lambda=(p-1,1,1), (p-2,3)$ and hence we must have 
$$Y= \Sp(p,1)\oplus {\bar Y}_0=\Sp(p,1)\oplus \Sp(p-1,2).$$

\q Finally we consider the exact sequence 
$$0\rightarrow X\rightarrow \Sp(p,p-1,2)\rightarrow \Sp(p)\otimes U\rightarrow 0$$
of $\Sym_p\times \Sym_{p+1}$-modules. Applying  $\FF_p$  we get the exact sequence,
$$0\rightarrow Y \rightarrow \FF_p(p,p-1,2)\rightarrow \Sp(p-1,2).$$ 
For dimension reasons the map  $\FF_p(p,p-1,2)\rightarrow \Sp(p-1,2)$ can not be surjective. The  Specht module $\Sp(p-1,2)$ has exactly two composition factors with $\soc(\Sp(p-1,2))=D^{(p+1)}$ and  
$\hd(\Sp(p-1,2))=D^{(p-1,2)}$. Thus we need to rule out the possibility that there is a short exact sequence 
$$0\to \Sp(p-1,2)\oplus \Sp(p,1)\to \FF_p(p,p-1,2)\to D^{(p+1)}\to 0 \eqno{(*)}.$$

\q We first suppose that this splits. In that case the trivial module $\Sp(p+1)$  occurs twice in the socle of  of $ \FF_p(p,p-1,2)$ and we have  \\
$\dim \Hom_{\Sym_p\times\Sym_{p+1}}(k,\Sp(p,p-1,2))=2$.  However, we have 
\begin{align*}
 \Hom_{\Sym_p\times \Sym_{p+1}}(k,\Sp(p,p-1,2))&=\Hom_{\Sym_{2p+1}}(M(p+1,p),\Sp(p,p-1,2))\cr
 &=\Hom_{\GL_{2p+1}(k)}(S^{p+1}E\otimes S^pE,\nabla(p,p-1,2)).
 \end{align*}
  The module $S^{p+1}E\otimes S^pE$ has a good filtration with sections $\nabla(p+1+i,p-i)$ for $0\leq i\leq p$. It is easy to see that $\core(2p,1)=\core(p,p-1,2)=(p,1)$ and that $\core(p+i+1,p-i)\neq(p,1)$ for $i\neq p-1$. Hence we have  
 \begin{align*}
 \Hom_{\GL_{2p+1}(k)}&(S^{p+1}E\otimes S^pE,\nabla(p,p-1,2))\cr
 &=\Hom_{\GL_{2p+1}(k)}(\nabla(2p,1),\nabla(p,p-1,2)).
 \end{align*}
 But now 
 $$\Hom_{\GL_{2p+1}(k)}(\nabla(2p,1),\nabla(p,p-1,2))=\Hom_{\SL_3}(\nabla(2p-1,1),\nabla(2p+1,p-3))$$
  and by \cite{CP},  Theorem 5.1,  we get that this has dimension at most $1$.

  \q It remains to consider the possibility that $(*)$ is non-split.   Note that $\Sp(p-1,2)$ and $D^{(p+1)}$ lie in the principal block and $\Sp(p,1)$ lies in a different block so we would have $\FF_p(p,p-1,2)=\Sp(p,1)\oplus N$, for an $\Sym_{p+1}$-module $N$ such that there is a non-split extension 
  $$0\to \Sp(p-1,2)\to N\to \Sp(p+1)\to 0.$$
    It is easy to check that $\Ext^1_{\Sym_{p+1}}(\Sp(p+1),\Sp(p-2,1))=k$ so that $N$ is the unique (up to isomorphism) non-split extension.  We consider now the permutation module $M(p-1,2)$. This has a Specht series with sections (from top to bottom) $\Sp(p+1)$, $\Sp(p,1)$, $\Sp(p-1,2)$.   Since $M(p-1,2)$ is induced from the subgroup $\Sym_{p-1}\times \Sym_2$, of order prime to $p$,  it is projective.  Since the trivial $\Sym_{p+1}$-module is not projective, and by block considerations, we must have $M(p-1,2)=\Sp(p+1,1)\oplus N$, i.e., $M(p-1,2)=\FF_p(p,p-1,2)$.  Thus we would  have
  \begin{align*}
  \dim \Hom_{\Sym_p\times \Sym_{p+1}}&(k\otimes M(p-1,2),\Sp(p,p-1,2))\cr \geq &\dim \Hom_{\Sym_{p+1}}(M(p-1,2),M(p-1,2))
  \end{align*}
  and it is easy to check that the right hand side (which is independent of characteristic since $M(p-1,2)$   is a permutation module) is  $3$. By Frobenius reciprocity we would therefore have 
  $$\dim \Hom_{\Sym_{2p+1}}(M(p,p-1,2),\Sp(p,p-1,2))\geq 3.$$
  We check now that this is not the case.  We have 
  \begin{align*} \Hom_{\Sym_{2p+1}}&(M(p,p-1,2),\Sp(p,p-1,2))\cr
  =&\Hom_G(S^p E\otimes S^{p-1} E\otimes S^2 E, \nabla(p,p-1,2))
  \end{align*}
  where $G=\GL_{2p+1}(k)$ and so 
  $$ \Hom_{\Sym_{2p+1}}(M(p,p-1,2),\Sp(p,p-1,2)) = \Hom_G(S, \nabla(p,p-1,2))$$
  where $S$ is the block component of  $S^p E\otimes S^{p-1} E\otimes S^2 E$ for the block containing $\nabla(p,p-1,2)$.   Now $S$ is a summand of the injective module  $S^pE \otimes S^{p-1} E\otimes S^2 E$, see \cite{D3}, 2.1,(8),  and hence injective. Moreover, one has e.g.,   by repeated use of Pieri's formula, that $S$ has a good filtration with sections $\nabla(2p,1)$, appearing with multiplicity $2$ and $\nabla(p,p-1,2)$. Furthermore $\nabla(p,p-1,2)$ and hence the injective envelope $I(p,p-1,2)$ embeds in  $S^pE\otimes S^{p-1} E\otimes S^2 E$. Since $(p,p-1,2)$ is restricted $I(p,p-1,2)$ is a tilting module  $T(\mu)$ with highest weight $\mu=\Mull(\lambda')$, where $\lambda=(p,p-1,2)$, see \cite{D3}, 4.3,(4)(ii) and 4.3,(10),(iii).    One calculates that in fact $\mu=(2p,1)$ so that we have $S=T(2p,1)\oplus \nabla(2p,1)$ and the tilting module $T(2p,1)$ has sections $\Delta(2p,1)$, $\Delta(p,p-1,2)$. Hence we have $\Hom_G(T(2p,1),\nabla(p,p-1,2))=k$.  Moreover, $\nabla(2p,1)$ has composition factor $L(p,p-1,2)$, occurring with multiplicity $1$ and appears as the head of $\nabla(2p,1)$.   It follows that $\Hom_G(\rad\,  \nabla(2p,1), \nabla(p,p-1,2))=0$ (since the socle $L(p,p-1,2)$ of $\nabla(p,p-1,2)$ is not a composition factor of $\rad \,\nabla(2p,1)$) and $\Hom_G(\nabla(2p,1), \nabla(p,p-1,2))=k$.  Thus we have 
  $$\Hom_G(S^p E\otimes S^{p-1} E\otimes S^2 E, \nabla(p,p-1,2))=\Hom_G(S,\nabla(p,p-1,2))=k\oplus k.$$
  This rules out the possibility $(*)$ and we must have 
  $$\FF_p(p,p-1,2)=\Sp(p-1,2)\oplus\Sp(p,1).$$
  The result follows.

\end{proof}

 \begin{lemma}
 We have $F_p(p,p,2)=\dim(p,2)+\dim(p,1,1)$.
  \end{lemma}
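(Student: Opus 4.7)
The plan is to observe that, unlike the previous lemma, the partition $(p,p,2)$ has exactly two removable nodes with distinct residues, so the entire calculation reduces to a direct application of Corollary 1.3.6 with equality. The removable nodes are $(2,p)$, of residue $p-2 \bmod p$, and $(3,2)$, of residue $p-1 \bmod p$; the node $(1,p)$ is blocked by the equality $\lambda_1 = \lambda_2 = p$. Because these two residues differ, Corollary 1.3.6 yields
$$F_p(p,p,2) = F_p(p,p-1,2) + F_p(p,p,1),$$
avoiding the principal-block filtration analysis used elsewhere in this section.

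The remaining steps are purely arithmetic. I would plug in the $a=p$ case of Lemma 2.4 to obtain $F_p(p,p,1) = p(p+1)/2$, and Lemma 2.10 to obtain $F_p(p,p-1,2) = \dim(p-1,2) + \dim(p,1)$. A direct hook length computation gives $\dim(p,1,1) = p(p+1)/2$, so $F_p(p,p,1) = \dim(p,1,1)$. Finally, the characteristic-zero branching rule applied to $\Sp(p,2)$ supplies the identity $\dim(p,2) = \dim(p-1,2) + \dim(p,1)$.

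Combining these gives
$$F_p(p,p,2) = \dim(p-1,2) + \dim(p,1) + \dim(p,1,1) = \dim(p,2) + \dim(p,1,1),$$
which is the desired formula. The main (and only) point to check is that raising the second part of $(p,p-1,2)$ to $p$ simultaneously removes the $(1,p)$ node and leaves the remaining two residues distinct; once this is noticed, no genuine obstacle remains, since all the hard work was absorbed into Lemma 2.10.
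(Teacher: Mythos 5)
Your proposal matches the paper's own proof exactly: both apply Corollary 1.3.6 with equality (the two removable nodes $(2,p)$ and $(3,2)$ have distinct residues), substitute Lemma 2.4 and Lemma 2.10, and then use the branching identity $\dim(p,2)=\dim(p-1,2)+\dim(p,1)$ together with $\dim(p,1,1)=p(p+1)/2$ to reach the stated formula. Correct and same route.
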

 
 \begin{proof}  The partition $(p,p,2)$ has $2$  removable nodes $(2,p)$ and $(3,2)$ with distinct  residues $p-2$ and $p-1$.  Therefore by Corollary 1.3.6 we have 
$$F_p(p,p,2)=F_p(p,p-1,2)+F_p(p,p,1)$$
and by Lemmas  2.4 and 2.10 we get
\begin{align*}
F_p(p,p,2)&=\dim(p-1,2)+\dim(p,1)+\dim(p,1,1)\cr
&=\dim(p,2)+\dim(p,1,1)
\end{align*}
as required.
\end{proof}

\begin{lemma}   For  $p\geq a\geq b\geq 2$, $b\neq p$,  we have 
$$F_p(p,a,b)=
\begin{cases}
 \dim(a,b), & \mbox{if } b\leq a\leq p-2; \cr
\dim(p-1,b)+\dim(p+b-2,1), &\mbox{if } a=p-1 ;\cr 
\dim(p,b)+\dim(p+b-2,1,1), &\mbox{if } a=p.
\end{cases}$$
\end{lemma}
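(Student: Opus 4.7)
The plan is to prove the formula by double induction, the outer induction being on $b$ (with base case $b=2$ already settled by Lemmas 2.9, 2.10, 2.11) and the inner induction being on $a$, treating first the range $b\leq a\leq p-2$, then $a=p-1$, and finally $a=p$. The main workhorse is Corollary 1.3.6: in every sub-case with $b\geq 3$ the removable nodes of $(p,a,b)$ have pairwise distinct residues modulo $p$, so that the inequality becomes an equality. (The only time residue distinctness fails is when $b=2$, which is precisely the base case.) Specifically, the nodes $(1,p),(2,a),(3,b)$ have residues $p-1,\,a-2,\,b-3\pmod p$, and a short case check confirms distinctness whenever $b\geq 3$ and $a\geq b$; when two of these nodes coincide in the sense that the partition has only two removable nodes (e.g.\ $a=b$, or $a=p$, or $a=p-1=b$), the same check applies to the reduced list.

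For $b\leq a\leq p-2$ with $b\geq 3$, Corollary 1.3.6 yields
$$F_p(p,a,b)=F_p(p-1,a,b)+F_p(p,a-1,b)+F_p(p,a,b-1),$$
(omit the middle term when $a=b$). The first term vanishes by Lemma 2.1, since $\lambda_2=a<p-1$ forces $(p-1,a,b)$ not to be of the form $(p-1)^l c$; the other two terms are supplied by the inner induction on $a$ and the outer induction on $b$ respectively. The branching identity $\dim(a,b)=\dim(a-1,b)+\dim(a,b-1)$ closes the step.

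The sub-case $a=p-1$ with $3\leq b\leq p-1$ is almost identical, except that now $F_p(p-1,p-1,b)=1$ by Lemma 2.1 (the partition is of the form $(p-1)^2 b$ with $0\leq b<p-1$, or $(p-1)^3$ when $b=p-1$). One applies the just-proved formula for $a=p-2$ and the outer inductive hypothesis for $b-1$, then uses $\dim(p-1,b)=\dim(p-2,b)+\dim(p-1,b-1)$ together with the trivial identity $\dim(p+b-3,1)+1=\dim(p+b-2,1)$; the leftover $+1$ from $F_p(p-1,p-1,b)$ is exactly what is needed to upgrade $\dim(p+b-3,1)$ to $\dim(p+b-2,1)$. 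Finally, for $a=p$ with $3\leq b\leq p-1$ there are only two removable nodes, at $(2,p)$ and $(3,b)$, with distinct residues $p-2$ and $b-3$, giving
$$F_p(p,p,b)=F_p(p,p-1,b)+F_p(p,p,b-1).$$
Substituting the formula from the previous sub-case and the outer inductive hypothesis, and then invoking the branching identities $\dim(p,b)=\dim(p-1,b)+\dim(p,b-1)$ and $\dim(p+b-2,1,1)=\dim(p+b-2,1)+\dim(p+b-3,1,1)$, delivers the stated formula. No new machinery beyond Lemmas 1.3.6 and 2.1 and the Branching Theorem is required; the main obstacle is simply the bookkeeping — keeping track of which sub-cases have two versus three removable nodes and verifying in each instance that the residues are distinct so that Corollary 1.3.6 is exact rather than a mere bound.
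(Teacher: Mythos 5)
Your proposal is correct and takes essentially the same route as the paper: both arguments rest on Corollary~1.3.6 combined with a check that the removable nodes of $(p,a,b)$ have pairwise distinct residues whenever $b\geq 3$, use Lemma~2.1 to evaluate $F_p(p-1,a,b)$, and close each step with Pascal-type branching identities for dimensions. The only difference is bookkeeping — the paper organises the recursion as a single induction on $a+b$ split into the cases $a\neq b$ and $a=b$, whereas you run an outer induction on $b$ and an inner one on $a$; the underlying computations are identical.
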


\begin{proof}
For $b=2$ the statement  is true by previous Lemmas. Hence we may assume that $b>3$. We prove the result by induction on $a+b$.

\bigskip

{\bf Case I.}    \q  $a\neq b$.

\bigskip

{\it Subcase  1.}  $a\leq p-2$.

\q Since $b\neq a$ the partition $(p,a,b)$ has three removable nodes $(1,p)$, $(2,a)$ and $(3,b)$ with the residues $p-1$, $a-2$, $b-3$, and these are distinct   since $3\leq b< a\leq p-2$. Therefore  by Corollary 1.3.6, Lemma 2.1 and induction we have
$$
F_p(p,a,b)=\dim(a-1,b)+\dim(a,b-1)=\dim(a,b)
$$
as required.\\

{\it Subcase 2.}  $a=p-1$ .

\q  Since $b<p-1$ the partition $(p,p-1,b)$ has three removable nodes $(1,p)$, $(2,p-1)$ and $(3,b)$ with distinct  residues $p-1$, $p-3$, $b-3$.  Therefore, by Corollary 1.3.6 we have 
$$F_p(p,p-1,b)=F_p(p-1,p-1,b)+ F_p(p,p-2,b)+ F_p(p,p-1,b-1).$$
By  induction and Lemma 2.1 we obtain
\begin{align*}
F_p(p,p-1,b)
&=1+\dim(p-2,b)+\dim(p-1,b-1)+\dim(p+b-3,1)\cr
&=\dim(p-2,b)+\dim(p-1,b-1)+\dim(p+b-3,1)\cr
&\ \ \ \ +\dim(p+b-2)\cr
&=\dim(p-1,b)+\dim(p+b-2,1).
\end{align*}

{\it Subcase   3.}    $a=p$ .

\q Since $b\leq p-1$,  the partition $(p,p,b)$ has two removable nodes $(2,p)$ and $(3,b)$  with distinct residues $p-2$ and $b-3$.  Hence by Corollary 1.3.6 we have 
\begin{align*}
F_p(p,p,b)&=\dim(p-1,b)+\dim(p+b-2,1)+\dim(p,b-1)\cr
&\ \ \ \ \  +\dim(p+b-3,1,1)\cr
&=\dim(p,b)+\dim(p+b-2,1,1)
\end{align*}

and  we are done.

\bigskip

{\bf Case II.}   $a=b\leq p-1.$

\bigskip

{\it Subcase  1.}   $a\leq p-2$.

\q The  partition $(p,a,a)$ has two removable nodes $(1,p)$, $(3,a)$ with distinct residues $p-1$, $a-3$. Therefore, by Corollary 1.3.6 we have 
$$F_p(p,a,a)=F_p(p-1,a,a)+F_p(p,a,a-1).$$
Applying induction and Lemma 2.1 we get 
$$F_p(p,a,a)=\dim(a,a-1)=\dim(a,a)$$
as required.

\bigskip

{\it Subcase   2.}   $a=b=p-1$. 

\q The  partition $(p,p-1,p-1)$ has two removable nodes $(1,p)$, $(3,p-1)$ with distinct  residues $p-1$, $p-4$. Therefore, by Corollary 1.3.6 we have
\begin{align*}
F_p(p,p-1,p-1)&=1+\dim(p-1,p-2)+\dim(2p-4,1)\cr
&=\dim(p-1,p-2)+\dim(2p-4,1)+\dim(2p-3)\cr
&=\dim(p-1,p-1)+\dim(2p-3,1)
\end{align*}
as required and  we are done.
\end{proof}

\begin{lemma}
 We have,
  $$F_p(p,p,p)=\dim(p,p-1)+\dim(2p-3,1,1).$$
 \end{lemma}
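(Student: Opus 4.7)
The plan is to reduce the problem to a case already handled by Lemma 2.12, via the branching theorem. The first observation is that $\lambda=(p,p,p)$ has a unique removable node: since $\lambda_1=\lambda_2=\lambda_3=p$, neither $(1,p)$ nor $(2,p)$ is removable (the row below has the same length), so only the bottom corner $(3,p)$ remains.

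Consequently, the Specht filtration in Theorem 1.3.5(i) (equivalently, the inequality in Corollary 1.3.6) collapses to a single section, giving
\[
\Res_{\Sym_{3p-1}}\Sp(p,p,p)\cong \Sp(p,p,p-1)
\]
as $\Sym_{3p-1}$-modules. The next step is to observe that $\FF_p$ computes $\Sym_p$-fixed vectors for the standard copy $\Sym_p\hookrightarrow \Sym_{3p}$ on the first $p$ letters, and this copy is contained in $\Sym_{3p-1}$. Therefore the space of $\Sym_p$-invariants is unchanged when passing to the restriction, and in particular
\[
F_p(p,p,p)=\dim \Sp(p,p,p)^{\Sym_p}=\dim \Sp(p,p,p-1)^{\Sym_p}=F_p(p,p,p-1).
\]

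Finally, I would invoke Lemma 2.12 with $(a,b)=(p,p-1)$. The hypotheses $p\geq a\geq b\geq 2$ and $b\neq p$ are satisfied for $p\geq 5$, and the $a=p$ branch of the formula gives
\[
F_p(p,p,p-1)=\dim(p,p-1)+\dim(p+(p-1)-2,1,1)=\dim(p,p-1)+\dim(2p-3,1,1),
\]
which is precisely what we want.

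There is essentially no obstacle here: all the hard work was carried out in Lemma 2.12 (and in the chain of lemmas leading up to it). The only point requiring mild care is that $\FF_p(\Sp(p,p,p))$ is naturally a $\Sym_{2p}$-module while $\FF_p(\Sp(p,p,p-1))$ is a $\Sym_{2p-1}$-module; however, only equality of dimensions is being asserted, and that follows at once because the underlying vector space of $\Sym_p$-invariants is the same.
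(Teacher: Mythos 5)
Your proof is correct and takes essentially the same approach as the paper. The paper's own argument is a one-line appeal to the observation $F_p(p,p,p)=F_p(p,p,p-1)$ (via Corollary 1.3.6, since $(p,p,p)$ has a unique removable node) followed by Lemma 2.12; you have simply unpacked the same reduction in more detail.
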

 
 \begin{proof}  We have $F_p(p,p,p)=F_p(p,p,p-1)$ and the result is immediate from the previous lemma.
  \end{proof}

\bs\bs\bs\bs


\section{A Counterexample for $p\geq5$.}

 We shall need the dimensions of certain simple modules.

\begin{lemma}We have 
\begin{align*}
\dim D^{(p-1,p-1,2)}&=\dim D^{(p-1,p-2,2)}\cr
&=\dim(p-1,p-2,2)-\dim(p,p-2,1)+\dim(2p-3,1,1).
\end{align*}
\end{lemma}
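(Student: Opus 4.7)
My plan is to prove the two equalities in turn: the first is a branching-rule calculation, and the second reduces to reading off three decomposition numbers in a single $p$-weight-one block of $\Sym_{2p-1}$.

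First I shall verify that $\lambda=(p-1,p-1,2)$ has a unique normal node, so that Lemma 1.3.8 applies. The removable nodes of $\lambda$ are $(2,p-1)$ of residue $p-3$ and $(3,2)$ of residue $p-1$; the addable nodes are $(1,p)$, $(3,3)$ and $(4,1)$ of residues $p-1$, $0$ and $p-3$ respectively. The node $(3,2)$ is not normal, because the addable node $(1,p)$ lies above it with the same residue $p-1$, yet the only removable node in row $2$, namely $(2,p-1)$, carries the wrong residue $p-3$. By contrast $(2,p-1)$ is vacuously normal, since the only addable node above it is $(1,p)$, of residue $p-1\neq p-3$. Hence $(2,p-1)$ is the unique normal node, $\lambda_{(2,p-1)}=(p-1,p-2,2)$, and Lemma 1.3.8 gives $\Res_{\Sym_{2p-1}}D^{(p-1,p-1,2)}\iso D^{(p-1,p-2,2)}$, establishing the first equality.

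For the second equality I would locate the $\Sym_{2p-1}$-block containing $(p-1,p-2,2)$ via residue content or, equivalently, via the $p$-abacus. A direct check shows that the three partitions $(p-1,p-2,2)$, $(p,p-2,1)$ and $(2p-3,1,1)$ share the $p$-core $(p-3,1,1)$, so lie in a common block of $p$-weight one, and that these are the only partitions of $2p-1$ in this block. All three are $p$-regular, and in the dominance order
$$(2p-3,1,1)>(p,p-2,1)>(p-1,p-2,2).$$
Since $(2p-3,1,1)$ is the most dominant partition of its block, Lemma 1.3.2(ii) forces $\Sp(2p-3,1,1)=D^{(2p-3,1,1)}$. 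The classical decomposition matrix of a $p$-weight-one symmetric-group block (each decomposition number is $0$ or $1$, and each non-simple Specht module has exactly two composition factors arranged in a staircase pattern) then yields
$$\Sp(p,p-2,1)\sim D^{(p,p-2,1)}|D^{(2p-3,1,1)}, \q \Sp(p-1,p-2,2)\sim D^{(p-1,p-2,2)}|D^{(p,p-2,1)}.$$
Telescoping the dimensions gives
\begin{align*}
\dim D^{(p-1,p-2,2)} &= \dim\Sp(p-1,p-2,2)-\dim D^{(p,p-2,1)}\\
&= \dim\Sp(p-1,p-2,2)-\dim\Sp(p,p-2,1)+\dim\Sp(2p-3,1,1)\\
&= \dim(p-1,p-2,2)-\dim(p,p-2,1)+\dim(2p-3,1,1),
\end{align*}
which combines with the first equality to give the lemma.

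The main obstacle is the weight-one decomposition step: Lemma 1.3.2(ii) together with the dominance ordering supplies only an upper bound on which simples can occur in each Specht module, so confirming that the two subdiagonal multiplicities $[\Sp(p,p-2,1):D^{(2p-3,1,1)}]$ and $[\Sp(p-1,p-2,2):D^{(p,p-2,1)}]$ are both nonzero requires the classical theory of weight-one blocks. A self-contained alternative would be a direct Jantzen--Schaper computation, or an extraction from the tables in James's Appendix to \cite{James}.
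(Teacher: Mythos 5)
Your proof of the first equality is identical in substance to the paper's: both identify $(2,p-1)$ as the unique normal node of $(p-1,p-1,2)$ and invoke Lemma 1.3.8. For the second equality you take the route that the paper mentions only parenthetically (\emph{``The reader familiar with blocks of defect $1$ will easily be able to give an alternative derivation''}): you appeal to the known shape of the decomposition matrix of a $p$-weight-one block, whereas the paper's main derivation uses the column removal principle for decomposition numbers (\cite{JamesII}, Theorem 6) together with the Schur functor. Both routes are legitimate and yield the same $3\times 3$ staircase; the column-removal route reduces to a smaller symmetric group, while the weight-one route uses the Brauer-tree structure of the block directly.

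Two points deserve correction. First, your claim that $(2p-3,1,1)$, $(p,p-2,1)$, $(p-1,p-2,2)$ are \emph{the only} partitions of $2p-1$ with $p$-core $(p-3,1,1)$ is false: a weight-one block contains exactly $p$ Specht modules. For $p=5$ the block also contains $(2,2,2,2,1)$ and $(2,1^7)$. What is true, and what the paper actually states, is that these are the only partitions of $2p-1$ in the block that dominate $(p-1,p-2,2)$; by James's dominance theorem (\cite{James}, 12.1 Theorem) these are therefore the only candidates for composition factors of $\Sp(p-1,p-2,2)$, which is all one needs. Since the three named partitions happen to sit at the top of the block, the staircase structure you invoke still applies, so the error does not invalidate the argument, but the statement as written is incorrect. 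Second, the fact you use to pin down the top row of the decomposition matrix, namely that $[\Sp(\lambda):D^\mu]\neq0$ forces $\mu\geq\lambda$, is James's Theorem 12.1 (quoted in the text following Lemma 1.3.2 in Section 1.3 of the paper), not Lemma 1.3.2(ii), which is a statement about $\Hom$ spaces between Specht modules; the citation should be adjusted.
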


\begin{proof}   The partition $(p-1,p-1,2)$ has two removable nodes $R=(2,p-1)$ and $S=(3,2)$ with residues $p-3$ and $p-1$ respectively. The node $S$ is not normal since the addable node $A=(1,p)$ has residue $p-1$. Hence $R=(2,p-1)$ is the unique normal node of $(p-1,p-1,2)$. Therefore we have by Lemma 1.3.8 that 
$$\Res_{\Sym_{2p-1}}(D^{(p-1,p-1,2)})\cong D^{(p-1,p-2,2)},$$
and so $\dim D^{(p-1,p-1,2)}=\dim D^{(p-1,p-2,2)}$.

\q We show now the second equality. Note first that $\core(p-1,p-2,2)=(p-3,1,1)$. Moreover note that if $\lambda$ is a partition with $\lambda\geq (p-1,p-2,2)$ and $\core(\lambda)=(p-3,1,1)$ then $\lambda=(p-1,p-2,2)$ or $(p,p-2,1)$ or $(2p-3,1,1)$.

We have the following decomposition matrix,

$$\begin{array}{l|ccc}
\ &D^{(2p-3,1,1)}&D^{(p,p-2,1)}&D^{(p-1,p-2,2)}\\
\hline

\Sp(2p-3,1,1)&1&0&0 \\ \Sp(p,p-2,1)&1&1&0 \\ \Sp(p-1,p-2,2)&0&1&1\end{array}$$

 That the matrix above must have $1$'s in the diagonal and $0$'s  above follows directly by e.g., \cite{James},  12.1 Theorem.
  Hence $\Sp(2p-3,1,1)=D^{(2p-3,1,1)}$. The rest of the matrix can be easily obtained now using the column removal principal for decomposition numbers, see e.g. \cite{JamesII}, Theorem 6,  and applying the Schur functor. (The reader familiar with blocks of defect $1$ will easily be able to give an alternative derivation of this matrix.)
 
\q Hence we have 
\begin{align*}
\dim D^{(p-1,p-2,2)}&=\dim(p-1,p-2,2)-\dim D^{(p,p-2,1)}\cr
&=\dim(p-1,p-2,2)-\dim(p,p-2,1)+\dim(2p-3,1,1).
\end{align*}

 \end{proof}

 \begin{lemma}
 We have 
$$\dim(p,p-1,1)=F_p(p,p,p)+\dim(\rad\, (\Sp(p-1,p-1,2))).$$
 
 \end{lemma}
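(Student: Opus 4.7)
The plan is to reduce the identity to a dimension comparison of Specht modules and then verify it using the Branching Theorem (Theorem 1.3.5(i)).

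First, note that $(p-1,p-1,2)$ is $p$-regular (for $p\geq 3$ no part is repeated $p$ times), so we have the short exact sequence
$$0\to \rad\,\Sp(p-1,p-1,2)\to \Sp(p-1,p-1,2)\to D^{(p-1,p-1,2)}\to 0,$$
which yields
$$\dim\,\rad\,\Sp(p-1,p-1,2)=\dim(p-1,p-1,2)-\dim D^{(p-1,p-1,2)}.$$
Combining this with Lemma 2.13 and Lemma 3.1, the desired identity
$$\dim(p,p-1,1)=F_p(p,p,p)+\dim\,\rad\,\Sp(p-1,p-1,2)$$
reduces, after cancelling the common term $\dim(2p-3,1,1)$, to the purely combinatorial statement
$$\dim(p,p-1,1)=\dim(p,p-1)+\dim(p,p-2,1)+\dim(p-1,p-1,2)-\dim(p-1,p-2,2).$$

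Next I verify this combinatorial identity with two applications of the Branching Theorem. Applied to $(p,p-1,1)$, whose three removable nodes are $(1,p)$, $(2,p-1)$, $(3,1)$, it gives
$$\dim(p,p-1,1)=\dim(p-1,p-1,1)+\dim(p,p-2,1)+\dim(p,p-1).$$
Applied to $(p-1,p-1,2)$, whose two removable nodes are $(2,p-1)$ and $(3,2)$, it gives
$$\dim(p-1,p-1,2)=\dim(p-1,p-2,2)+\dim(p-1,p-1,1).$$
Subtracting the second from the first and rearranging yields precisely the required identity, completing the proof.

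The computation is essentially routine once the right translations are made; the only real thing to notice is that Lemma 3.1 and the branching identities perfectly align to cancel the term $\dim(2p-3,1,1)$ introduced by Lemma 2.13 with the $\dim D^{(p-1,p-1,2)}$ coming from the radical. There is no serious obstacle beyond bookkeeping and recording the (trivial) $p$-regularity of $(p-1,p-1,2)$ needed to identify $\Sp(p-1,p-1,2)/\rad$ with $D^{(p-1,p-1,2)}$.
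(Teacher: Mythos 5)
Your proof is correct and follows essentially the same route as the paper's: substitute Lemma 2.13 and Lemma 3.1, use $\dim\rad\,\Sp(p-1,p-1,2)=\dim(p-1,p-1,2)-\dim D^{(p-1,p-1,2)}$, and close the computation with the two branching identities (the paper simply carries out the same cancellation in a single chain of equalities). One small slip in your parenthetical: for $p=3$ the partition $(p-1,p-1,2)=(2,2,2)$ is \emph{not} $3$-regular, so the correct justification for $p$-regularity is simply that the longest run of equal nonzero parts in $(p-1,p-1,2)$ has length $2<p$, which holds because Section 3 operates under $p\geq 5$.
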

 
 \begin{proof} By Lemma 3.1 and Lemma 2.13 we have 
  \begin{align*}
F_p&(p,p,p)+\dim( \rad\,(\Sp(p-1,p-1,2)))\cr
 &=\dim(p,p-1)+\dim(2p-3,1,1)+\dim(p-1,p-1,2)\cr
 &\ \ \ \ \ \  -\dim D^{(p-1,p-1,2)}\cr
 &=\dim(p,p-1)+\dim(p-1,p-1,2)-\dim(p-1,p-2,2)\cr
 &\ \ \ \ \ \ \ +\dim(p,p-2,1)\cr
 &=\dim(p,p-1)+\dim(p-1,p-1,1)+\dim(p,p-2,1)\cr
 &=\dim(p,p-1,1)
 \end{align*}
  as required.
 
 \end{proof}
 
 \begin{lemma}
 There is a short exact sequence of $\Sym_{2p}$-modules,
 $$0\rightarrow \FF_p(p,p,p)\rightarrow\Sp(p,p-1,1)\rightarrow \rad\, \Sp(p-1,p-1,2)\rightarrow0.$$
 \end{lemma}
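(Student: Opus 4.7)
My plan is to apply the principal $\Sym_p$-block filtration of Section 1.5 to $\Sp(p,p,p)$ and chase the two resulting long exact sequences, closing the argument by a dimension count via Lemma 3.2.

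First I would identify the two skew Specht modules occurring in that filtration. By Lemma 1.3.9\,(ii), $\Sp((p,p,p)/(p-1,1))$ and $\Sp((p,p,p)/(p-2,1,1))$ have Specht filtrations with sections given by the Littlewood--Richardson rule. A direct check of the column-strict and lattice-word conditions (reading each row right-to-left, top row first) shows that on each of these skew shapes every entry is forced, producing a unique LR filling: for $(p,p,p)/(p-1,1)$ the content is $(p,p-1,1)$, and for $(p,p,p)/(p-2,1,1)$ the content is $(p-1,p-1,2)$. Hence
$$\Sp((p,p,p)/(p-1,1)) \cong \Sp(p,p-1,1), \quad \Sp((p,p,p)/(p-2,1,1)) \cong \Sp(p-1,p-1,2).$$

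Next, let $X$ denote the submodule of $b_p\Sp(p,p,p)$ consisting of the bottom two sections of the principal $\Sym_p$-block filtration. Applying $\FF_p$ to the defining short exact sequence of $X$ and invoking Lemma 1.4.1 (note $\FF_p$ acts on the first tensor factor only) produces
$$0 \to \FF_p(X) \to \Sp(p,p-1,1) \stackrel{\phi}{\to} \Sp(p-1,p-1,2),$$
while applying $\FF_p$ to $0 \to X \to b_p\Sp(p,p,p) \to \Sp(p)\otimes \Sp(p,p) \to 0$, together with $\FF_p(b_p\Sp(p,p,p)) = \FF_p(p,p,p)$, yields
$$0 \to \FF_p(X) \to \FF_p(p,p,p) \to \Sp(p,p).$$
Write $C$ for the cokernel of the first inclusion, so $C$ embeds in $\Sp(p,p)$.

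Third, I would show that the image $I$ of $\phi$ lies inside $\rad\,\Sp(p-1,p-1,2)$. Since $(p,p-1,1)$ is $p$-regular, $\hd\,\Sp(p,p-1,1) \cong D^{(p,p-1,1)} \not\cong D^{(p-1,p-1,2)} = \hd\,\Sp(p-1,p-1,2)$, so $\Hom_{\Sym_{2p}}(\Sp(p,p-1,1), D^{(p-1,p-1,2)}) = 0$ and the composite of $\phi$ with the projection $\Sp(p-1,p-1,2) \twoheadrightarrow D^{(p-1,p-1,2)}$ vanishes.

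Finally, combining the two exact sequences,
$$\dim \FF_p(p,p,p) \;=\; \dim \FF_p(X) + \dim C \;=\; \dim \Sp(p,p-1,1) - \dim I + \dim C,$$
and Lemma 3.2 gives $\dim \Sp(p,p-1,1) = \dim \FF_p(p,p,p) + \dim \rad\,\Sp(p-1,p-1,2)$. Subtracting,
$$\dim \rad\,\Sp(p-1,p-1,2) = \dim I - \dim C.$$
Together with $I \subseteq \rad\,\Sp(p-1,p-1,2)$ this forces $C = 0$ and $I = \rad\,\Sp(p-1,p-1,2)$. Hence $\FF_p(p,p,p) = \FF_p(X)$ embeds in $\Sp(p,p-1,1)$ with cokernel $\rad\,\Sp(p-1,p-1,2)$, proving the exact sequence.

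The main obstacle will be the Littlewood--Richardson computation in the first step: not that any individual entry is hard to force, but that the whole argument hinges on both skew Specht modules turning out to be straight Specht modules of precisely the partition shapes $(p,p-1,1)$ and $(p-1,p-1,2)$, and one must also verify that the connecting map $\phi$ appearing in the long exact sequence is faithfully detected by the dimension identity of Lemma 3.2.
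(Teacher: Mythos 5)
Your proof is correct and follows essentially the same route as the paper's: same principal $\Sym_p$-block filtration, same module $X$ from the bottom two sections, same head-comparison argument to land the image in $\rad\,\Sp(p-1,p-1,2)$, and the same dimension count via Lemma~3.2. The only cosmetic differences are that you explicitly verify the Littlewood--Richardson identifications $\Sp((p,p,p)/(p-1,1))\cong\Sp(p,p-1,1)$ and $\Sp((p,p,p)/(p-2,1,1))\cong\Sp(p-1,p-1,2)$ (the paper takes these as read), and you track the cokernel $C$ explicitly where the paper simply uses $Y\leq\FF_p(p,p,p)$ together with the inequality from Lemma~3.2.
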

 
\begin{proof}
 Consider the principal block $\Sym_p\times \Sym_{2p}$-filtration of $\Sp(p,p,p)$ in the usual way. We have the filtration
$$\begin{array}{ccc}
\Sp(p)\otimes \Sp(p,p)\cr
\hrulefill\cr
\Sp(p-1,1)\otimes \Sp(p,p-1,1)\cr
\hrulefill\cr
\Sp(p-2,1,1)\otimes  \Sp(p-1,p-1,2)\cr
\end{array}$$
of the principal block component of $\Sp(p,p,p)$.

\q  Let  $X$ be the  $\Sym_p\times \Sym_{2p}$-module corresponding to  the two bottom sections of the above filtration. Thus we have the short exact sequence,\\
$$
0\rightarrow \Sp(p-2,1,1)\otimes \Sp(p-1,p-1,2)\rightarrow X\rightarrow\Sp(p-1,1)\otimes \Sp(p,p-1,1)\rightarrow 0.
$$
\q Let  $Y=\FF_p(X)$. Then, $Y\leq\FF_p(p,p,p)$ and by Lemma 1.4.1 we have an exact sequence

$$0\rightarrow Y\rightarrow \Sp(p,p-1,1)\rightarrow \Sp(p-1,p-1,2).$$

Note that $\hd(\Sp(p,p-1,1))\neq \hd(\Sp(p-1,p-1,2))$ and so in fact we have an exact sequence 

$$0\rightarrow Y\rightarrow \Sp(p,p-1,1)\rightarrow \rad\, \Sp(p-1,p-1,2).$$

 Hence we get 
$$
 \dim Y \geq \dim(p,p-1,1)-\dim (\rad\, \Sp(p-1,p-1,2)))=F_p(p,p,p)
$$
 
 by Lemma 3.2. Therefore $Y=\FF_p(p,p,p)$ and the sequence
 $$0\rightarrow Y\rightarrow \Sp(p,p-1,1)\rightarrow \rad\ \Sp(p-1,p-1,2)\rightarrow 0$$
is exact.

 \end{proof}

 \begin{proposition} For $p\geq 5$, the $\Sym_{2p}$-module $\FF_p(p,p,p)$ does not have a Specht filtration. 
 
 \end{proposition}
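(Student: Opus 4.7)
I argue by contradiction. Suppose $\FF_p(p,p,p)$ admits a Specht filtration
\[ \FF_p(p,p,p) = V_1 > V_2 > \cdots > V_t > V_{t+1}=0, \qquad V_i/V_{i+1}\cong \Sp(\lambda^i), \]
and derive a contradiction from the short exact sequence of Lemma 3.3,
\[ 0 \to \FF_p(p,p,p) \to \Sp(p,p-1,1) \to \rad\, \Sp(p-1,p-1,2) \to 0. \tag{$*$} \]
The first step is to identify the bottom section $V_t \cong \Sp(\lambda^t)$. The partition $(p,p-1,1)$ is $p$-restricted, so by Proposition 1.3.1 and \cite{EGS}, 6.4, the Specht module $\Sp(p,p-1,1)$ has a simple socle equal to $f\, L(p,p-1,1) = D^{\Mull((p,p-1,1)')}$. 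Since $\FF_p(p,p,p)$ embeds in $\Sp(p,p-1,1)$, its socle is this same simple module; hence $\soc\,\Sp(\lambda^t)$ must contain $D^{\Mull((p,p-1,1)')}$. Combined with the strict inequality $\dim \Sp(\lambda^t)\le \dim \FF_p(p,p,p)<\dim \Sp(p,p-1,1)$ (consequence of Lemma 3.2 together with $\rad\, \Sp(p-1,p-1,2)\neq 0$), this cuts the list of candidates for $\lambda^t$ down sharply, and excludes $\lambda^t=(p,p-1,1)$ itself.

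The second step is to pin down the multiset $\{\lambda^1,\ldots,\lambda^t\}$ by comparing composition multiplicities and dimensions. The sequence $(*)$ gives
\[ [\FF_p(p,p,p):D^\mu] = [\Sp(p,p-1,1):D^\mu] - [\rad\,\Sp(p-1,p-1,2):D^\mu] \]
for each $p$-regular $\mu$, so the sums $\sum_i[\Sp(\lambda^i):D^\mu]$ are completely determined. Together with the dimension identity
\[ \sum_{i=1}^t \dim \Sp(\lambda^i) = \dim(p,p-1) + \dim(2p-3,1,1) \]
from Lemma 2.13, and using the decomposition matrix exhibited in the proof of Lemma 3.1 (supplemented by block and dominance considerations to handle the principal block of $\Sym_{2p}$), one reduces to a very short list of possible filtration shapes.

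The third step is to rule out each of the surviving possibilities. For each one I would pick a well-chosen simple $D^\mu$ (the natural candidates are $D^{(p,p-1,1)}$, $D^{(p-1,p-1,2)}$, and $D^{(2p-3,1,1)}$) and compare
\[ \dim \Hom_{\Sym_{2p}}(\FF_p(p,p,p),D^\mu) = [\hd\, \FF_p(p,p,p):D^\mu] \]
(computed via $(*)$ and Lemma 1.3.2) with the upper bound $\sum_i\dim\Hom_{\Sym_{2p}}(\Sp(\lambda^i),D^\mu)$ forced by the hypothetical filtration and Lemma 1.3.2(ii),(iii). A discrepancy in either direction produces the contradiction. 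An analogous socle/head calculation against a suitable Young module $Y^\mu$ would serve as a backup test.

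\textbf{Main obstacle.} The delicate point is when $\lambda^t$ is not $p$-restricted: then $f\, L(\lambda^t)=0$ and the socle of $\Sp(\lambda^t)$ need not be simple, so the socle constraint alone does not fix $\lambda^t$. I expect to handle this either by dualising $(*)$ through the sign-Mullineux twist to convert the socle analysis into an equivalent head analysis on the restricted side, or by restricting through the branching rule (Theorem 1.3.5, using that the removable nodes of $(p,p-1,1)$ and $(p-1,p-1,2)$ have pairwise distinct residues for $p\ge 5$) to reduce to Specht modules for $\Sym_{2p-1}$ whose structure is already under control through the inductive dimension calculations of Section 2.
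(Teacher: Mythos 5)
Your proposal is a plan rather than a completed argument, and the crux is left undone.  You correctly start from the exact sequence of Lemma 3.3, but the three steps you then sketch are not carried out, and the first of them is exactly where you flag the ``main obstacle'': if the bottom Specht factor $\Sp(\lambda^t)$ is not $p$-restricted its socle need not be simple, so matching it against $\soc\, \Sp(p,p-1,1) = D^{\Mull((p,p-1,1)')}$ does not pin down $\lambda^t$.  Steps 2 and 3 are likewise only indicated (``one reduces to a very short list'', ``I would pick a well-chosen simple $D^\mu$''); in particular step 2 requires decomposition numbers for the relevant block of $\Sym_{2p}$ which are not as accessible as the defect-one block used in Lemma 3.1, and step 3 only gives an inequality between $\dim\Hom(\FF_p(p,p,p),D^\mu)$ and the filtration, which is not guaranteed to produce a contradiction without further work.

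The fallback you mention in the last paragraph is in fact the route the paper takes, and it is cleaner precisely because it sidesteps the obstacle you identify.  The paper restricts the four-term sequence
\[
0 \to \FF_p(p,p,p) \to \Sp(p,p-1,1) \to \Sp(p-1,p-1,2) \to D^{(p-1,p-1,2)} \to 0
\]
to $\Sym_{2p-1}$.  By Theorem 1.3.5 the two Specht modules break up into direct sums of Specht modules indexed by their removable nodes (all residues distinct), and $D^{(p-1,p-1,2)}$ restricts irreducibly to $D^{(p-1,p-2,2)}$ by Lemma 3.1.  Taking block components, the restriction of $\FF_p(p,p,p)$ decomposes as $Y_1\oplus Y_2\oplus Y_3$, and a core/dominance/dimension argument forces $Y_3 = 0$, $Y_1 = \Sp(p,p-1)$, $Y_2 = \Sp(2p-3,1,1)$.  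If $\FF_p(p,p,p)$ had a Specht filtration, so would its restriction and each block component; the restriction therefore has exactly two Specht factors, namely $\Sp(p,p-1)$ and $\Sp(2p-3,1,1)$.  One then checks directly from the branching rule that no partition of $2p$ restricts to $\Sp(p,p-1)$, that none restricts to $\Sp(2p-3,1,1)$, and that none restricts to the direct sum of these two, giving the contradiction.  If you want to turn your sketch into a proof, carrying out this restriction-and-block argument (which you only gesture at) is the step that needs to be filled in; the socle/Grothendieck-group route in $\Sym_{2p}$ would require substantially more decomposition-number information and additional work to rule out the residual Specht shape.
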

 
 \begin{proof}
 By the previous Lemma we have an exact sequence,
  \begin{align*}
 0&\rightarrow \FF_p(p,p,p)\rightarrow\Sp(p,p-1,1)\cr
 &\rightarrow \Sp(p-1,p-1,2)\rightarrow D^{(p-1,p-1,2)}\rightarrow 0.
 \end{align*}
 \q We restrict these modules to $\Sym_{2p-1}$. Note that 
 $$\Res_{\Sym_{2p-1}}(\Sp(p,p-1,1))=\Sp(p-1,p-1,1)\oplus\Sp(p,p-2,1)\oplus \Sp(p,p-1)$$
 that 
 $$\Res_{\Sym_{2p-1}}(\Sp(p-1,p-1,2))=\Sp(p-1,p-2,2)\oplus \Sp(p-1,p-1,1)$$ 
 and, by the proof of Lemma 3.1,  that 
  $$\Res_{\Sym_{2p-1}}(D^{(p-1,p-1,2)})=D^{(p-1,p-2,2)}.$$
  
  \q Therefore, as an $\Sym_{2p-1}$-module, $\FF_p(p,p,p)$ has at most three block components so we have an $\Sym_{2p-1}$-module decomposition 
  $\FF_p(p,p,p)=Y_1\oplus Y_2\oplus Y_3$. where the non-zero terms among $Y_1,Y_2,Y_3$ belong to different blocks.  Moreover, we have an exact sequence of $\Sym_{2p-1}$-modules
  \begin{align*} 0\rightarrow Y_1&\oplus Y_2\oplus Y_3\rightarrow\Sp(p-1,p-1,1)\oplus\Sp(p,p-2,1)\oplus\Sp(p,p-1)\cr
 &\rightarrow \Sp(p-1,p-2,2)\oplus\Sp(p-1,p-1,1)\rightarrow D^{(p-1,p-2,2)}\rightarrow 0.
 \end{align*}

By block considerations we have that the copies of $\Sp(p-1,p-1,1)$ must match up isomorphically and we may choose the labelling so that $Y_3= 0$. Factoring out  terms isomorphic to $\Sp(p-1,p-1,1)$ from the above sequence we get an exact sequence 
 \begin{align*}
 0\rightarrow Y_1&\oplus Y_2\rightarrow\Sp(p,p-2,1)\oplus\Sp(p,p-1)\cr
 &\rightarrow \Sp(p-1,p-2,2)\rightarrow D^{(p-1,p-2,2)}\rightarrow 0.
 \end{align*}
 Now $\Sp(p,p-2,1)$ and $\Sp(p-1,p-2,2)$ are in the same block and so we may choose the labelling so that  $Y_1=\Sp(p,p-1)$. Factoring out terms isomorphic to $\Sp(p,p-1)$ we thus obtain an exact sequence
 $$0\rightarrow Y_2\rightarrow \Sp(p,p-2,1)\rightarrow \rad\ \Sp(p-1,p-2,2)\rightarrow 0.$$

\q  If $\FF_p(p,p,p)$ has a Specht filtration  so does its restriction to $\Sym_{2p-1}$ and by projecting onto block components we obtain that each $Y_i$ has a Specht filtration.  In particular $Y_2$ has a Specht filtration and so  must contain a Specht module $\Sp(\lambda)$ for some $\lambda$ with $\deg(\lambda)=2p-1$. Then \\
$\Hom_{\Sym_{2p-1}}(\Sp(\lambda),\Sp(p,p-2,1))\neq 0$. Therefore, by Lemma 1.3.2(ii),  $\lambda\geq (p,p-2,1)$ and $\lambda\neq (p,p-2,1)$ for dimension reasons. Moreover, we have  $\core(\lambda)=\core(p,p-2,1)=(p-3,1,1)$. This implies that $\lambda=(2p-3,1,1)$. Furthermore  by Lemma 3.1 we have 
 $$\dim(2p-3,1,1)=\dim(p,p-2,1)-\dim (\rad\ \Sp(p-1,p-2,2))$$
  and so $Y_2=\Sp(2p-3,1,1)$. 
 
 \q Thus  if  $\FF_p(p,p,p)$ has a Specht filtration then this can have at most two terms. Since no partition $\lambda$ of $2p$ such that the restriction of  $\Sp(\lambda)$ to $\Sym_{2p-1}$ has  $\Sp(2p-3,1,1)$ and $\Sp(p,p-1)$ (c.f. Theorem 1.3.5). So there would exist  partitions $\lambda$ and $\mu$ such that $\Res_{\Sym_{2p-1}}(\Sp(\lambda))=\Sp(p,p-1)$ and $\Res_{\Sym_{2p-1}}(\Sp(\mu))=\Sp(2p-3,1,1)$ but, by Theorem 1.3.5 for example,  this is impossible.
 \end{proof}

 \bs\bs\bs\bs


\section{A Counterexample in characteristic 3}

\q  In this section the characteristic of $k$ is $3$. Our  counterexample in this  case is the module of $\Sym_3$ invariants in the  $\Sym_{12}$-module $\Sp(4,4,4)$.   First we calculate $F_3(4,4,4)$. We do this in several steps. The methods and techniques here are exactly the same with those in  Sections 2 and 3. Therefore, we state the first five Lemmas without giving proofs and we leave it to the reader to confirm the results by using the same arguments as  those of Section 2.  We then use these results to deduce the main point, namely that $F_3(4,4,4)=126$.

\begin{lemma} For $\lambda=(3,a)$ with $0<a\leq 3$ we have,
 $$F_3(3,a)=
 \begin{cases} 
 a, &\mbox{if } 1\leq a\leq 2;\cr
2, & \mbox{if } a=3 . 
\end{cases}$$ 
 
 \end{lemma}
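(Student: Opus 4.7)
The plan is to imitate the proof of Lemma 2.3 in the $p=3$ setting, handling the three values of $a$ separately and relying on Lemma 2.1 (which was proved for arbitrary $p$) for the partitions with first part at most $p-1=2$.

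For $a=1$, I would use the Frobenius reciprocity approach from Lemma 2.3. One has
$$\FF_3(3,1)=\Hom_{\Sym_3}(k,\Sp(3,1))=\Hom_{\Sym_4}(M(3,1),\Sp(3,1)).$$
The partition $(3,1)$ is a $3$-core, so it labels a block of defect $0$; hence $\Sp(3,1)$ is simple and projective, and consequently $M(3,1)\cong \Sp(4)\oplus \Sp(3,1)$. The summand $\Sp(4)$ contributes nothing by block considerations (or by Lemma 1.3.2(ii), since $(4)\not\leq (3,1)$ in the relevant sense once one remembers the $\geq$ direction), while $\End_{\Sym_4}\Sp(3,1)=k$ by Lemma 1.3.2(iii). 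Thus $F_3(3,1)=1$.

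For $a=2$, I would use the branching machinery. The removable nodes of $(3,2)$ are $(1,3)$ and $(2,2)$, with residues $2$ and $0$ modulo $3$, which are distinct; hence by Corollary 1.3.6 one has the equality
$$F_3(3,2)=F_3(2,2)+F_3(3,1).$$
The term $F_3(3,1)=1$ was just computed, and $F_3(2,2)=1$ by Lemma 2.1, since $(2,2)=(p-1)^l a$ with $l=2$ and $a=0$. Therefore $F_3(3,2)=2$.

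For $a=3$, the partition $(3,3)$ has the single removable node $(2,3)$, with $\lambda_R=(3,2)$. Corollary 1.3.6 (the trivial single-node case) gives $F_3(3,3)=F_3(3,2)=2$, completing the proof.

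The argument is completely routine; the only mildly non-obvious ingredient is the decomposition $M(3,1)\cong \Sp(4)\oplus\Sp(3,1)$, and even that is forced by $(3,1)$ being a $3$-core (so the Young filtration splits because $\Sp(3,1)$ is projective). No part of the argument seems to present a real obstacle.
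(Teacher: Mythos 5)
Your proof is correct and is exactly what the paper intends: the paper explicitly leaves the proof to the reader with the instruction to reuse the arguments of Section~2, and you imitate the proof of Lemma~2.3 (reciprocity and the core decomposition of $M(p,1)$ for $a=1$, the branching identity from Corollary~1.3.6 with distinct residues plus Lemma~2.1 for $a=2$, and the single removable node for $a=p$). One small slip: in the $a=1$ case your parenthetical alternative via Lemma~1.3.2(ii) does not actually work, because $(4)\geq(3,1)$ in the dominance order, so that lemma does not exclude a nonzero $\Hom_{\Sym_4}(\Sp(4),\Sp(3,1))$; the block argument you give first (the $3$-cores $(1)$ and $(3,1)$ differ) is the correct justification and suffices.
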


\begin{lemma} Let $1\leq b\leq a\leq 3$ with  $b\neq 3$.
 Then we have
$$F_3(3,a,b)=
\begin{cases}
 1, & \mbox{if } a=b=1;\cr
b+3,&\mbox{if } a=2;\cr
5b+1,&\mbox{if } a=3.
\end{cases}$$

\end{lemma}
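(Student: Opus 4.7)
I would proceed by induction on the degree, following the same methodology as Section~2 and splitting the five partitions $(3,a,b)$ according to whether their removable nodes carry pairwise distinct residues modulo~$3$. For $(3,1,1)$, $(3,2,1)$ and $(3,3,2)$ the removable nodes do have distinct residues, so Corollary~1.3.6 yields equalities. Feeding in the previously computed values from Lemma~2.1 and Lemma~4.1, one gets immediately $F_3(3,1,1)=F_3(2,1,1)+F_3(3,1)=0+1=1$, $F_3(3,2,1)=F_3(2,2,1)+F_3(3,1,1)+F_3(3,2)=1+1+2=4$, and $F_3(3,3,2)=F_3(3,2,2)+F_3(3,3,1)$; the last identity reduces the lemma to the two remaining cases.

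For $\lambda=(3,2,2)$ and $\lambda=(3,3,1)$ the two removable nodes share a residue, and Corollary~1.3.6 supplies only the upper bounds $F_3(3,2,2)\leq F_3(2,2,2)+F_3(3,2,1)=5$ and $F_3(3,3,1)\leq F_3(3,2,1)+F_3(3,3)=6$. For the matching lower bounds I would run the block-filtration argument of Lemmas~2.5--2.6 essentially verbatim. Form the principal $\Sym_3$-block filtration of $\Sp(\lambda)$ from Section~1.5, let $X$ denote the $\Sym_3\times\Sym_4$-submodule cut out by the bottom two sections, and apply $\FF_3$. Invoking Lemma~1.4.1 this produces the four-term exact sequence
\begin{equation*}
0\to Y\to \Sp(\lambda/(2,1))\to \Sp(\lambda/(1,1,1))
\end{equation*}
with $Y=\FF_3(X)\leq\FF_3(\lambda)$. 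A short LRR calculation (each skew shape has only four boxes) decomposes the two skew Specht modules into Specht sections; in both cases the codomain turns out to be a single Specht module, and among the three Specht sections of $\Sp(\lambda/(2,1))$ only one lies in the same $3$-block of $\Sym_4$ as the codomain. By Lemma~1.3.2(iii) that component of the map is either $0$ or an isomorphism, and the former alternative would push $\dim Y$ above the Corollary~1.3.6 upper bound. Hence the map is an isomorphism there and $\dim Y$ meets the upper bound exactly, giving $F_3(3,2,2)=5$ and $F_3(3,3,1)=6$. Substituting these into the formula for $F_3(3,3,2)$ then yields $F_3(3,3,2)=11=5\cdot 2+1$.

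The one step of genuine content is the block-component analysis of the map $\Sp(\lambda/(2,1))\to \Sp(\lambda/(1,1,1))$ in the two tight cases; everything else is routine bookkeeping. Characteristic $3$ poses no extra obstacle, because the only structural inputs used are Corollary~1.3.6, the $\FF_3$-cohomology values in Lemma~1.4.1, the block decomposition of $k\Sym_4$ via Nakayama, and $\End_{\Sym_4}(\Sp(\mu))=k$ from Lemma~1.3.2(iii), all of which apply for $p=3$. The author's choice to suppress the proof is therefore consistent with the claim that it is a straightforward transcription of the Section~2 arguments into this setting.
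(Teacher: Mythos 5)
Your proposal is correct and carries out precisely the Section~2 methodology that the paper explicitly delegates to the reader (the paper states Lemmas~4.1--4.5 without proof, saying the arguments of Section~2 transfer). The residue check correctly isolates $(3,2,2)$ and $(3,3,1)$ as the only cases where Corollary~1.3.6 gives merely an upper bound, and your block-filtration argument for those two (LRR giving $\Sp(\lambda/(2,1))\cong\Sp(3,1)\oplus\Sp(2,2)\oplus\Sp(2,1,1)$ with $\Sp(\lambda/(1^3))$ a single Specht module in one of those three $\Sym_4$-blocks, then $\End=k$ forcing that component of the map to be an isomorphism) reaches the upper bounds $5$ and $6$, matching the stated values.
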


\begin{lemma} We have $F_3(3,3,3)=11$.

\end{lemma}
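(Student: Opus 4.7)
The plan is to apply Corollary 1.3.6 once and then read off the answer from Lemma 4.2, exactly paralleling the one-line argument of Lemma 2.13 in the $p\geq 5$ case. First I would observe that $(3,3,3)$ has a \emph{unique} removable node, namely the corner $(3,3)$. Indeed, the candidate nodes $(1,3)$ and $(2,3)$ both fail the defining condition for removability, since $\lambda_1=\lambda_2$ and $\lambda_2=\lambda_3$, while $(3,3)$ is removable because $3=\ell(\lambda)$. Consequently the residues of the removable nodes are trivially distinct, so the equality case of Corollary 1.3.6 applies and yields
$$F_3(3,3,3) = F_3(3,3,2).$$

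Next I would invoke Lemma 4.2 with $a=3$, $b=2$, which is permitted since $b\neq 3$. The relevant formula is $F_3(3,a,b) = 5b+1$ when $a=3$, giving $F_3(3,3,2) = 5\cdot 2 + 1 = 11$. Combining this with the previous display immediately yields $F_3(3,3,3) = 11$, as desired.

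The main (in fact only) obstacle is recognising that no separate analysis of the principal $\Sym_3$-block filtration of $\Sp(3,3,3)$ is needed at this stage: the block-theoretic work that would otherwise occur here has already been absorbed into the derivation of Lemma 4.2 for the partition $(3,3,2)$. At the level of the Specht module itself this reflects the stronger fact, via Theorem 1.3.5(ii), that $\Res_{\Sym_8}(\Sp(3,3,3)) \cong \Sp(3,3,2)$ as $\Sym_8$-modules, so that the two spaces of $\Sym_3$-invariants are literally the same vector space. This is precisely the phenomenon exploited in Lemma 2.13 for $p\geq 5$, where the partition $(p,p,p)$ also has only the single removable node $(p,p)$ and the computation reduces at once to the earlier value of $F_p(p,p,p-1)$.
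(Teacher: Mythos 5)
Your argument is correct and is exactly the intended one: the paper states Lemma 4.3 without proof, referring to "the same arguments as those of Section 2," and the parallel statement there, Lemma 2.13, is proved by precisely this one-line reduction $F_p(p,p,p)=F_p(p,p,p-1)$ via the unique removable node. One trivial slip: for $p\geq 5$ the unique removable node of $(p,p,p)$ is $(3,p)$, not $(p,p)$, but this does not affect the argument.
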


\begin{lemma} For $0\leq a\leq 4$ we have 
$$F_3(4,a)=\begin{cases}5, &  \hbox{ if } a=3; \cr
a+1,  & \hbox{ if } a\neq 3.
\end{cases}$$

\end{lemma}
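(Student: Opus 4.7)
\emph{Proof plan.} I would handle the five cases $a=0,1,2,3,4$ in turn. For $a\neq 2$ the removable nodes of $(4,a)$ either have pairwise distinct residues modulo $3$ or reduce to a single node, so Corollary~1.3.6 delivers $F_3(4,a)$ directly by reduction to smaller cases. The only nontrivial step is $a=2$, where the removable nodes $(1,4)$ and $(2,2)$ of $(4,2)$ both have residue $0\bmod 3$, and Corollary~1.3.6 yields only the upper bound $F_3(4,2)\leq F_3(3,2)+F_3(4,1)=2+2=4$, whereas the statement demands $3$.

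For the routine cases: $\Sp(4)=k$ is trivial and gives $F_3(4)=1$. The partition $(4,1)$ has removable nodes with residues $0$ and $2$, so by Lemma~4.1 we have $F_3(4,1)=F_3(3,1)+F_3(4)=1+1=2$. The partition $(4,3)$ has removable nodes with residues $0$ and $1$, so by Lemma~4.1 and the $a=2$ case, $F_3(4,3)=F_3(3,3)+F_3(4,2)=2+3=5$. Finally $(4,4)$ has the unique removable node $(2,4)$, giving $F_3(4,4)=F_3(4,3)=5$. Each value matches the claimed formula.

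To settle $a=2$, I would exploit that no hook length of $(4,2)$ is divisible by $3$ (the hooks are $5,4,2,1,2,1$); hence $(4,2)$ is a $3$-core and $\Sp(4,2)=D^{(4,2)}$ lies in a block of defect zero, so it is a projective simple $k\Sym_6$-module of dimension $9$ and $\Res_{\Sym_3}\Sp(4,2)$ is projective as a $k\Sym_3$-module. Lifting to characteristic~$0$, Lemma~1.3.9 applied to $\Sp_\que(4,2)$ combined with the Littlewood--Richardson identities $\Sp_\que((4,2)/(3))=\Sp_\que((4,2)/(2,1))=\Sp_\que(3)\oplus\Sp_\que(2,1)$ gives $\Res_{\Sym_3}\Sp_\que(4,2)=3\Sp(3)\oplus 3\Sp(2,1)$; reducing mod~$3$ and using $[\Sp(2,1):D^{(3)}]=[\Sp(2,1):D^{(2,1)}]=1$, the composition factors of $\Res_{\Sym_3}\Sp(4,2)$ are $6\,D^{(3)}\oplus 3\,D^{(2,1)}$. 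The indecomposable projective $k\Sym_3$-modules in characteristic $3$ are $P(k)$ with Loewy series $k|\sgn|k$ and $P(\sgn)$ with Loewy series $\sgn|k|\sgn$; writing $\Res_{\Sym_3}\Sp(4,2)=aP(k)\oplus bP(\sgn)$ and matching multiplicities forces $2a+b=6$ and $a+2b=3$, hence $a=3$, $b=0$. Since $P(k)$ has simple socle $k$, $F_3(4,2)=3\cdot\dim\Hom_{\Sym_3}(k,P(k))=3$. The main obstacle is thus the projective-restriction analysis for $a=2$; all other cases fall out cleanly from branching.
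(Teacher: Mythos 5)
The proposal is correct, and the values in each case ($F_3(4,0)=1$, $F_3(4,1)=2$, $F_3(4,2)=3$, $F_3(4,3)=5$, $F_3(4,4)=5$) all check out. The paper does not actually write out a proof of this lemma: it asks the reader to confirm Lemmas 4.1--4.5 ``by using the same arguments as those of Section 2'', i.e.\ Corollary 1.3.6 plus the principal-$\Sym_p$-block filtration machinery of Section 1.5. Your treatment of $a\neq 2$ matches that intended route. For $a=2$, where the two removable nodes of $(4,2)$ share residue $0\bmod 3$ and Corollary 1.3.6 only gives $F_3(4,2)\leq 4$, you take a genuinely different and in fact cleaner path: noting that $(4,2)$ is a $3$-core (hooks $5,4,2,1,2,1$), so $\Sp(4,2)$ sits in a defect-zero block of $\Sym_6$ and is projective; restriction to $\Sym_3$ is then projective, and comparing the composition multiplicities $6\,D^{(3)}\oplus 3\,D^{(2,1)}$ (read off from the characteristic-zero branching via Lemma 1.3.9) against the Cartan matrix of $k\Sym_3$ forces $\Res_{\Sym_3}\Sp(4,2)\cong P(k)^{\oplus 3}$, hence $F_3(4,2)=3$ since $\soc P(k)=k$. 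What the paper's approach would require here is an analysis of a connecting map into $H^1(\Sym_3,\Sp(2,1)\otimes\,-\,)$, which is more delicate, whereas your projective decomposition reduces to linear algebra on Cartan numbers. The trick is specific to Specht modules labelled by $p$-cores, so it does not replace the filtration method in the other lemmas of Section 4, but it is a valid and tidy shortcut for this particular case.
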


\begin{lemma} We have $F_3(4,1,1)=3$.
\end{lemma}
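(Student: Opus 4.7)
The plan is to apply Corollary 1.3.6 directly: the partition $(4,1,1)$ has only two removable nodes, their residues mod $3$ are distinct, so equality holds in the bound and $F_3(4,1,1)$ decomposes as a sum of two previously computed invariants.

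First I would identify the removable nodes of $(4,1,1)$. These are $R_1=(1,4)$, whose removal gives $(3,1,1)$, and $R_2=(3,1)$, whose removal gives $(4,1)$. Next I would compute their residues: $\res(R_1)=4-1\equiv 0\pmod 3$ and $\res(R_2)=1-3\equiv 1\pmod 3$. Since these residues are distinct, Corollary 1.3.6 yields the equality
\[
F_3(4,1,1)=F_3(3,1,1)+F_3(4,1).
\]

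Then I would quote the previous lemmas of this section to evaluate both terms. By Lemma 4.2 in the case $a=b=1$ we have $F_3(3,1,1)=1$, and by Lemma 4.4 in the case $a=1$ (which is not $3$) we have $F_3(4,1)=1+1=2$. Adding these gives $F_3(4,1,1)=1+2=3$, as required.

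There is really no obstacle here; the only thing to be careful about is confirming that $(1,4)$ and $(3,1)$ are indeed the full list of removable nodes (the node $(2,1)$ is not removable since its removal would leave a gap in the second row) and that their residues are genuinely distinct, so that Corollary 1.3.6 gives equality rather than merely an upper bound.
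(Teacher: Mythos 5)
Your proof is correct and follows exactly the branching-rule strategy (Corollary 1.3.6 plus the earlier computed values) that the paper indicates one should use; the paper itself states this lemma without proof, remarking only that the methods are the same as those of Section 2. The residue check is right: the two removable nodes $(1,4)$ and $(3,1)$ have residues $0$ and $1$ mod $3$, so Corollary 1.3.6 gives equality and $F_3(4,1,1)=F_3(3,1,1)+F_3(4,1)=1+2=3$.
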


\q With the information about the dimensions  obtained so far we are ready to calculate $F_3(4,4,4)$ and prove that $\Sp(4,4,4)$ is the desired counterexample.

\begin{lemma}
We have $F_3(4,4,4)=126$.
\end{lemma}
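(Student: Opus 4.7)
The plan is to apply the methodology of Section 1.5 directly to $\lambda=(4,4,4)$ with $p=3$, in the spirit of the proofs in Sections 2 and 3. Since in characteristic three all three partitions of $3$ lie in the principal block of $\Sym_3$, the principal block $\Sym_3\times\Sym_9$-filtration of $\Sp(4,4,4)$ coincides with the filtration supplied by Lemma 1.3.9 and has three sections $\Sp(\mu)\otimes\Sp((4,4,4)/\mu)$ for $\mu=(3),(2,1),(1,1,1)$. Since only three partitions of $9$ are contained in $(4,4,4)$, a short Littlewood--Richardson check would identify each of the three skew factors as a single Specht module:
$$
\Sp((4,4,4)/(3))=\Sp(4,4,1),\q \Sp((4,4,4)/(2,1))=\Sp(4,3,2),\q \Sp((4,4,4)/(1,1,1))=\Sp(3,3,3).
$$
By the hook length formula, $\dim\Sp(4,4,1)=84$, $\dim\Sp(4,3,2)=168$ and $\dim\Sp(3,3,3)=42$; in particular $168-42=126$.

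Let $X$ denote the submodule of $b_3\Sp(4,4,4)$ formed by the bottom two sections and put $Y=\FF_3(X)$. Applying $\FF_3$ to the defining short exact sequence of $X$ and invoking Lemma 1.4.1(ii) with $p=3$ (so that $\FF_3\Sp(2,1)=k$, $\FF_3\Sp(1,1,1)=0$ and $R^1\FF_3\Sp(1,1,1)=k$), one arrives at the exact sequence
$$
0\to Y\to \Sp(4,3,2)\xrightarrow{\phi}\Sp(3,3,3),
$$
while applying $\FF_3$ to $0\to X\to b_3\Sp(4,4,4)\to \Sp(3)\otimes\Sp(4,4,1)\to 0$ yields
$$
0\to Y\to \FF_3(4,4,4)\xrightarrow{\pi}\Sp(4,4,1).
$$
Thus $F_3(4,4,4)=\dim Y+\dim\mathrm{Im}\,\pi$, and the target identity $F_3(4,4,4)=126$ reduces to proving simultaneously that $\phi$ is surjective (so $\dim Y=126$) and that $\pi=0$.

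The main obstacle is this simultaneous analysis. All three of $(4,4,1)$, $(4,3,2)$ and $(3,3,3)$ share the $3$-core $\emptyset$ and so lie in the principal block of $\Sym_9$; in particular pure block considerations are unavailable, and an iterated application of Corollary 1.3.6 yields only the weaker bound $F_3(4,4,4)=F_3(4,4,3)\leq 131$, which cannot be used to pinch $\FF_3(4,4,4)$ from above. I would therefore argue in the style of Lemmas 2.6, 2.10 and 2.12 as follows: first continue the long exact sequence for $X$ one step further to identify $R^1\FF_3(X)$ in terms of $\Coker\phi$ and $\Sp(4,3,2)$, using the vanishing $H^2(\Sym_3,\Sp(1,1,1))=0$ and the computation $H^1(\Sym_3,\Sp(2,1))=k$; then splice with the long exact sequence for $b_3\Sp(4,4,4)$ to realise $\mathrm{Im}\,\pi$ as the kernel of an induced morphism $\Sp(4,4,1)\to R^1\FF_3(X)$. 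Using Lemma 1.3.2(ii)(iii) (in particular $\End_{\Sym_9}\Sp(3,3,3)=k$) together with the socle and head structure of the relevant Specht modules read off from James's tables, one forces $\Coker\phi=0$ and shows the induced map is an embedding, with further restriction down to $\Sym_8$ available as a fallback (in the spirit of the argument preceding Proposition 3.4) should the direct $\Hom$ bookkeeping become unwieldy. Combining the two claims will give $\FF_3(4,4,4)=Y$ of dimension $126$.
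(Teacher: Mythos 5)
You correctly set up the two exact sequences
$$0\to Y\to \Sp(4,3,2)\xrightarrow{\phi}\Sp(3,3,3),\qquad 0\to Y\to\FF_3(4,4,4)\xrightarrow{\pi}\Sp(4,4,1),$$
and you correctly diagnose the central obstacle: the three partitions $(4,4,1)$, $(4,3,2)$, $(3,3,3)$ all have empty $3$-core, so a block decomposition of these sections gives no leverage. But your substitute argument --- tracking $R^1\FF_3(X)$ and forcing $\mathrm{Coker}\,\phi=0$ by $\Hom$/$\End$ bookkeeping --- is left as a sketch and is the hard part, not a routine finish. In particular nothing in what you write excludes the possibility that $\phi$ fails to be surjective (e.g.\ $\phi=0$, which would give $\dim Y=168$), nor does it pin down $\mathrm{Im}\,\pi$; the ``fallback'' via restriction to $\Sym_8$ is gestured at but not carried out. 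As it stands this is a genuine gap: you do not have a proof that $F_3(4,4,4)=126$.

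The paper closes the gap by shifting one box before invoking the principal block filtration. Since $(4,4,4)$ has a unique removable node, Corollary 1.3.6 gives $F_3(4,4,4)=F_3(4,4,3)$, and the paper then analyses the principal $\Sym_3$-block filtration of $\Sp(4,4,3)$ as a $\Sym_3\times\Sym_8$-module. There the skew constituents decompose completely: $V\cong\Sp(4,3,1)\oplus\Sp(4,2,2)\oplus\Sp(3,3,2)$ and $W\cong\Sp(3,3,2)$, because the $3$-cores $(2)$, $(1,1)$, $(3,1,1)$ are pairwise distinct. Block considerations then force $\Sp(4,3,1)\oplus\Sp(4,2,2)\subseteq Y$, giving $\dim Y\geq 70+56=126$, and the iterated Corollary 1.3.6 bound $F_3(4,4,3)\leq 131$ together with $\dim\Sp(3,3,2)=42$ forces the two copies of $\Sp(3,3,2)$ to match up and the top section to contribute nothing, whence $F_3(4,4,3)=126$. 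That single change of viewpoint --- work with $\Sp(4,4,3)$ over $\Sym_3\times\Sym_8$ rather than $\Sp(4,4,4)$ over $\Sym_3\times\Sym_9$ --- is exactly what lets the block machinery do all the work, and it is the key idea your proposal is missing.
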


\begin{proof}

 We find first an upper bound for $F_3(4,4,4)$ by calculating upper bounds for $F_3(4,a,b)$ and using the left exactness of the functor $\FF_3$.  
 
 \q By Corollary 1.3.6  and the previous lemmas we have 
$$
F_3(4,2,1)\leq F_3(4,2)+F_3(4,1,1)+F_3(3,2,1)
 =10.$$

 \q By Corollary 1.3.6,  the above and the previous lemmas we have 
$$
 F_3(4,2,2)
 \leq F_3(4,2,1)+F_3(3,2,2)=15.
$$

 \q Similarly we obtain
$$
F_3(4,3,1)\leq F_3(4,3)+F_3(4,2,1)
+F_3(3,3,1)=21
$$
and 
$$
F_3(4,3,2)\leq F_3(4,3,1)+F_3(4,2,2)
 +F_3(3,3,2)=47
$$
and 
$$
F_3(4,3,3)\leq F_3(4,3,2)+F_3(3,3,3)=58
$$
and 
$$
F_3(4,4,1)\leq F_3(4,4)+F_3(4,3,1)=26
$$
and 
$$
F_3(4,4,2)\leq F_3(4,4,1)+\F_3(4,3,2)
 =73.
$$
and 
$$
F_3(4,4,3)\leq F_3(4,4,2)+ F_3(4,3,3)=131.
$$

\q We consider now the usual  $\Sym_3\times \Sym_8$ filtration of principal $\Sym_3$ block of $\Sp(4,4,3)$. This has the form
$$\begin{array}{ccc}
\Sp(3)\otimes U\cr
\hrulefill\cr
\Sp(2,1)\otimes V\cr
\hrulefill\cr
\Sp(1,1,1)\otimes W\cr
\end{array}$$
with $U\sim \Sp(4,4)|\Sp(4,3,1)$, $V\sim \Sp(4,3,1)|\Sp(4,2,2)|\Sp(3,3,2)$ and $W\sim \Sp(3,3,2)$.

\q We have  $\core(4,3,1)=(2)$, $\core(4,2,2)=(1,1)$ and $\core(3,3,2)=(3,1,1)$. Hence the Specht modules $\Sp(4,3,1),\Sp(4,2,2)$  and $\Sp(3,3,2)$ are in different blocks and so $V\cong \Sp(4,3,1)\oplus\Sp(4,2,2)\oplus \Sp(3,3,2)$. Moreover $\core(4,4)=(1,1)$ and so $U\cong \Sp(4,4)\oplus \Sp(4,3,1)$.  

\q Let  $X$ be the $\Sym_3\times \Sym_{8}$-module corresponding to   the  bottom two sections of the above filtration,. Thus we have the short exact sequence
$$0\rightarrow \Sp(1,1,1)\otimes  W\rightarrow X\rightarrow\Sp(2,1)\otimes V\rightarrow 0.$$

\q Let  $Y=\FF_3(X)$. Then $Y\leq\FF_3(4,4,3) $and  by Lemma 1.4.1 we get an exact sequence
 $$0\rightarrow Y\rightarrow V{\rightarrow} W.$$
By block considerations we get immediately that $\Sp(4,3,1)\oplus\Sp(4,2,2)\subseteq Y$ and so 
$$
\dim Y \geq \dim(4,3,1)+\dim(4,2,2)=70+56=126.
$$
Moreover using the hook formula we have that $\dim (3,3,2)=42$ and since $\dim Y \leq 131$ we conclude, by Lemma 1.3.2, that the copies of $\Sp(3,3,2)$ of $V$ and $W$ must match up isomorphically and so  $Y=\Sp(4,3,1)\oplus\Sp(4,2,2)$.

\q We consider now the short exact sequence
$$0\rightarrow X\rightarrow \Sp(4,4,3)\rightarrow \Sp(3)\otimes U\rightarrow 0.$$
Applying   $\FF_3$ we get the an exact sequence
$$0\rightarrow Y\rightarrow \FF_3(4,4,3)\rightarrow \Sp(4,4)\oplus \Sp(4,3,1)$$
By \cite{James},  p.144,  we have that $\Sp(4,4)$ has exactly two composition factors and in particular $\soc(\Sp(4,4))=D^{(6,2)}$ and $\hd(\Sp(4,4))=D^{(4,4)}$ with $\dim D^{(6,2)}=13$ and $\dim D^{(4,4)}=1$. Moreover $\Sp(4,3,1)$ has three composition factors $D^{(5,3)}, D^{(5,2,1)}$ and $\hd(\Sp(4,3,1))=D^{(4,3,1)}$ with $\dim D^{(5,3)}=28$, $\dim D^{(5,2,1)}=35$ and $\dim D^{(4,3,1)}=7$.
Since $F_3(4,4,3)\leq 131$ and $\dim Y=126$ we get immediately that $\FF_3(4,4,3)=Y$ and so $F_3(4,4,3)=126$.

\q Thus we have $F_3(4,4,4)=F_3(4,4,3)=126$ as required.
\end{proof}

\begin{proposition}
The $\Sym_9$-module $\FF_3(4,4,4)$ does not have a Specht filtration.
\end{proposition}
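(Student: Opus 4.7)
My plan is to follow closely the strategy of the proof of Proposition~3.4 for $p\ge 5$, establishing a short exact sequence for $\FF_3(\Sp(4,4,4))$ and then ruling out a Specht filtration by analyzing its restriction to $\Sym_8$.

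First, an application of LRR shows that each of the skew shapes $(4,4,4)/(3)$, $(4,4,4)/(2,1)$, $(4,4,4)/(1,1,1)$ contributes a single Specht section, namely $\Sp(4,4,1)$, $\Sp(4,3,2)$, $\Sp(3,3,3)$ respectively (the first and third by Pieri and its column analogue, the middle by a short check). Applying $\FF_3$ to the principal $\Sym_3$-block filtration of $\Sp(4,4,4)$ and invoking Lemma~1.4.1 exactly as in Section~1.5 yields a submodule $Y\le \FF_3(\Sp(4,4,4))$ sitting in $0\to Y\to \Sp(4,3,2)\to \Sp(3,3,3)$. Using $F_3(4,4,4)=126$ from Lemma~4.7 together with $\dim\Sp(4,3,2)-\dim\Sp(3,3,3)=168-42=126$, both $Y=\FF_3(\Sp(4,4,4))$ and surjectivity of the right-hand map are forced, giving the short exact sequence
$$
0\to \FF_3(\Sp(4,4,4))\to \Sp(4,3,2)\to \Sp(3,3,3)\to 0.
$$

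Restricting this sequence to $\Sym_8$: the removable nodes of $(4,3,2)$ have distinct residues and $(3,3,3)$ has a single removable node, so by Theorem~1.3.5 one has $\Res_{\Sym_8}\Sp(4,3,2)=\Sp(3,3,2)\oplus\Sp(4,2,2)\oplus\Sp(4,3,1)$ and $\Res_{\Sym_8}\Sp(3,3,3)=\Sp(3,3,2)$, and therefore
$$
\Res_{\Sym_8}\FF_3(\Sp(4,4,4))=\Sp(4,3,1)\oplus\Sp(4,2,2).
$$
Moreover the two summands lie in different blocks of $\Sym_8$ since the $3$-cores of $(4,3,1)$ and $(4,2,2)$ are $(2)$ and $(1,1)$.

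Now suppose for contradiction that $\FF_3(\Sp(4,4,4))$ has a Specht filtration with sections $\Sp(\nu^{(1)}),\dots,\Sp(\nu^{(s)})$, $\nu^{(i)}\vdash 9$. Restricting the filtration to $\Sym_8$ and projecting to the two blocks yields Specht filtrations of $\Sp(4,3,1)$ and $\Sp(4,2,2)$. Using Lemma~1.3.2(ii) together with a dimension and Grothendieck-group count against the composition factors of each block, each block component admits only the trivial Specht filtration with a single section. Hence by Theorem~1.3.5 the total number of removable nodes of the $\nu^{(i)}$ equals two, so $s\le 2$.

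The argument concludes with a short case analysis via the Branching Theorem. If $s=1$, then $\nu^{(1)}\vdash 9$ has two removable nodes yielding $\Sp(4,3,1)$ and $\Sp(4,2,2)$; the only partition obtained by adding a node to both $(4,3,1)$ and $(4,2,2)$ is $(4,3,2)$, but $(4,3,2)$ has three removable nodes, a contradiction. If $s=2$, each $\nu^{(i)}$ must have exactly one removable node; however a direct enumeration shows that none of $(5,3,1)$, $(4,4,1)$, $(4,3,2)$, $(4,3,1,1)$ (the one-node extensions of $(4,3,1)$) or $(5,2,2)$, $(4,3,2)$, $(4,2,2,1)$ (the one-node extensions of $(4,2,2)$) has a single removable node, again a contradiction. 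The main technical hurdle I anticipate is justifying in the third paragraph that the two block components of the restriction do not admit any nontrivial Specht filtrations.
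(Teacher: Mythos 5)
The first two paragraphs of your argument are sound: the short exact sequence $0\to \FF_3(\Sp(4,4,4))\to \Sp(4,3,2)\to \Sp(3,3,3)\to 0$ agrees with the paper's Lemma, and the $\Sym_8$-restriction $\Sp(4,3,1)\oplus\Sp(4,2,2)$ is correct (the removable nodes of $(4,3,2)$ have residues $0,1,2$ and the $3$-cores $(2)$, $(1,1)$, $(3,1,1)$ of the three summands separate blocks as you say). But the step you flag as a ``technical hurdle'' is in fact the place where the argument breaks: it is \emph{false} that $\Sp(4,3,1)$ (or $\Sp(4,2,2)$) admits only the trivial one-section Specht filtration.

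Concretely, $(4,3,1)$ is both $3$-regular and $3$-restricted, so $\Sp(4,3,1)$ has simple head $D^{(4,3,1)}$ and simple socle; since it has exactly three composition factors $D^{(4,3,1)}, D^{(5,2,1)}, D^{(5,3)}$ (dimensions $7,35,28$), it is uniserial and its composition series is its unique filtration of length $3$. But each composition factor is itself an irreducible Specht module: $\Sp(5,3)=D^{(5,3)}$ (dimension $28$ forces the decomposition number $[\Sp(5,3):D^{(8)}]=0$); $\Sp(5,1^3)$ is irreducible since $3\nmid 8$ (hook theorem) and its regularization is $(5,2,1)$, so $\Sp(5,1^3)=D^{(5,2,1)}$; likewise $\Sp(2,1^6)$ is irreducible (it is $\Sp(7,1)^*\otimes\sgn$ with $3\nmid 8$) with regularization $(4,3,1)$, so $\Sp(2,1^6)=D^{(4,3,1)}$. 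Hence the composition series of $\Sp(4,3,1)$ \emph{is} a Specht filtration with three sections $\Sp(2,1^6),\Sp(5,1^3),\Sp(5,3)$ (in some order). Your ``Grothendieck-group count'' cannot rule this out, because Specht classes are not linearly independent in $G_0(k\Sym_8)$: this block has nine Specht modules but only five simples. Consequently the deduction $s\le 2$ is unjustified, and the case analysis for $s\in\{1,2\}$ does not close the argument. The paper avoids this difficulty entirely by computing the composition factors of $\FF_3(4,4,4)$ directly from the exact sequence and James's tables, determining that any Specht filtration would have sections $\Sp(6,3),\Sp(5,1^4),\Sp(2,1^7)$, and then showing $D^{(6,3)}$ cannot lie in the socle of any of these three modules.
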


\begin{proof}

By the previous Lemma we have that  $F_3(4,4,4)=126$. We consider now the $\Sym_3\times \Sym_9$-filtration 
$$\begin{array}{ccc}
\Sp(3)\otimes \Sp(4,4,1)\cr
\hrulefill\cr
\Sp(2,1)\otimes \Sp(4,3,2) \cr
\hrulefill\cr
\Sp(1,1,1)\otimes \Sp(3,3,3)\cr
\end{array}$$
of $\Sp(4,4,4)$.   Let  $X$ be the $\Sym_3\times \Sym_{9}$-module corresponding to the   bottom two  sections of the above filtration. Thus  we have a  short exact sequence
$$0\rightarrow \Sp(1,1,1)\otimes \Sp(3,3,3)\rightarrow X\rightarrow\Sp(2,1)\otimes \Sp(4,3,2)\rightarrow 0.$$
Let $Y=\FF_3(X)$. Then $Y\leq\FF_3(4,4,4)$ and by Lemma 1.4.1 we have  an exact sequence
 $$0\rightarrow Y\rightarrow \Sp(4,3,2)  {\rightarrow} \Sp(3,3,3).$$
Hence 
$$\dim Y \geq \dim(4,3,2)-\dim(3,3,3)=168-42=126$$
and therefore  $Y=\FF_3(4,4,4)$ and we have a short exact sequence 
$$0\rightarrow \FF_3(4,4,4)\rightarrow \Sp(4,3,2){\rightarrow} \Sp(3,3,3)\rightarrow 0.$$
By \cite{James},  p.145,  the composition factors of $\Sp(4,3,2)$ are:
$$
D^{(8,1)}, D^{(7,1,1)}, D^{(6,3)}, D^{(6,2,1)}, D^{(5,4)}, D^{(5,2,2)}, D^{(4,,4,1)}  \hbox{ and } D^{(4,3,2)}
$$
and the composition factors of $\Sp(3,3,3)$ are
$D^{(7,1,1)}$  and $D^{(4,3,2)}.$  Therefore the composition factors of $\FF_3(4,4,4)$ are 
$$
D^{(8,1)}, D^{(6,3)}, D^{(6,2,1)}, D^{(5,4)}, D^{(5,2,2)} \hbox{ and }  D^{(4,4,1)}
$$

\q Using James's  tables, \cite{James},  p.145, is easy to deduce that the only way for  $\FF_3(4,4,4)$ to have a Specht filtration is with sections
$\Sp(6,3)$, $\Sp(5,1^4)$  and $\Sp(2,1^7)$.  Therefore one of these Specht modules would embed in  $\FF_3(4,4,4)$ and so inside $\Sp(4,3,2)$. 

\q We show that this does not happen.   Assume for a contraction that it does. First note that $D^{(6,3)}$ embeds in $\Sp(4,3,2)$ and is not a composition  factor of $\Sp(3,3,3)$ and so $D^{(6,3)}$ embeds in $\FF_3(4,4,4)$.  Hence $D^{(6,3)}$ embeds in a section, namely in $\Sp(6,3)$, $\Sp(5,1)$ or $\Sp(2,1^7)$.  But $D^{(6,3)}$ appears only once as a composition factor of $\FF_3(4,4,4)$ and is a composition factor of $\Sp(6,3)$ so does not appear in the socle of $\Sp(5,1^4)$ or $\Sp(2,1^7)$.  The only possibility now is that $D^{(6,3)}$ is in the socle of $\Sp(6,3)$. But, from James's tables, $\Sp(6,3)$ is not irreducible: it has composition factors $D^{(6,3)}$ and $D^{(8,1)}$.   Moreover $\Sp(6,3)$ has simple head $D^{(6,3)}$ and so $D^{(6,3)}$  does not appear in the socle. This completes the proof.

\end{proof}

\bs\bs\bs\bs


\section{ A Counterexample in characteristic $2$.}

\q Our  counterexample in the case of $p=2$ is the module of $\Sym_2$ invariants in   $\Sp(4,4)$. 
 We first work towards the  calculation   of $F_2(4,4)$.

\begin{lemma}

We have  $\F_2(2)=F_2(1,1)=1$.

\end{lemma}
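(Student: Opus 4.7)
The plan is to observe that this is essentially a numerical restatement of Lemma~1.4.1(i), using the convention that $F_m(\lambda)$ denotes $\dim \FF_m(\Sp(\lambda))$.

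First I would recall the definitions of the relevant Specht modules as $\Sym_2$-modules. The Specht module $\Sp(2)$ is, by construction, the trivial one-dimensional module for $\Sym_2$ over any field, while $\Sp(1,1)$ is the one-dimensional sign representation. Over a field $k$ of characteristic $2$ the sign character becomes trivial, since $-1 = 1$ in $k$, so $\Sp(1,1) \cong k$ as $\Sym_2$-modules as well.

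Consequently both $\Sp(2)$ and $\Sp(1,1)$ are trivial one-dimensional $\Sym_2$-modules, and their spaces of $\Sym_2$-fixed points coincide with the whole module. Therefore $\FF_2(\Sp(2)) = k$ and $\FF_2(\Sp(1,1)) = k$, yielding $F_2(2) = F_2(1,1) = 1$. There is no real obstacle here; the statement merely fixes the base case that will feed into the inductive dimension computations carried out in the remainder of Section~5.
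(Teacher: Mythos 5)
Your argument is exactly the intended one: both $\Sp(2)$ and $\Sp(1,1)$ are trivial one-dimensional $\Sym_2$-modules over a field of characteristic $2$, so each has a one-dimensional space of $\Sym_2$-invariants. The paper dismisses this as ``Clear,'' and the same observation already appears explicitly in the proof of Lemma~1.4.1(i), so your reasoning coincides with the paper's.
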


\begin{proof}

Clear.

\end{proof}

\begin{lemma}

We have  $F_2(2,1)=F_2(2,2)=1$.
\end{lemma}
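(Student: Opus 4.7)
My approach closely mirrors the $a=1$ case of Lemma 2.3.  For $F_2(2,1)$, the plan is to invoke Frobenius reciprocity to write
$$F_2(2,1) = \dim \Hom_{\Sym_3}(M(2,1), \Sp(2,1)).$$
The key observation is that $(2,1)$ has no hooks of length $2$ and is therefore a $2$-core, whereas $(3)$ has $2$-core $(1)$.  Nakayama's Conjecture then places $\Sp(2,1)$ and $\Sp(3)$ in distinct $2$-blocks of $\Sym_3$, so the Specht filtration $M(2,1) \sim \Sp(3)\,|\,\Sp(2,1)$ supplied by Young's rule splits as a direct sum $M(2,1) \cong \Sp(3) \oplus \Sp(2,1)$.

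Next I would verify that $\Sp(2,1)$ is simple in characteristic $2$.  Since $(2,1)$ is alone in its block, this is a defect-zero block of $\Sym_3$ and the lone Specht module in it is automatically simple (and projective).  Equivalently, any composition factor of $\Sp(2,1)$ is labelled by a $2$-regular partition in the same block and so must be $D^{(2,1)}$; since the only $1$-dimensional simple $k\Sym_3$-module is the trivial module $D^{(3)}$, dimension-counting forces $\Sp(2,1) = D^{(2,1)}$.  Hence $\End_{\Sym_3}(\Sp(2,1)) = k$ by Schur's lemma, while $\Hom_{\Sym_3}(\Sp(3), \Sp(2,1)) = 0$ by block considerations.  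Combining these with the decomposition of $M(2,1)$ gives $F_2(2,1) = 1$.

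For $F_2(2,2)$ I would appeal to the Branching Theorem.  The partition $(2,2)$ has a single removable node, namely $(2,2)$ with $\lambda_R = (2,1)$, so Theorem 1.3.5(ii) yields the $\Sym_3$-module isomorphism $\Res_{\Sym_3} \Sp(2,2) \cong \Sp(2,1)$.  Since the relevant copy of $\Sym_2$ sits inside this $\Sym_3$, the two spaces of $\Sym_2$-invariants coincide, giving
$$F_2(2,2) = \dim \Sp(2,1)^{\Sym_2} = F_2(2,1) = 1.$$

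I do not anticipate any real obstacles: the only subtle point is the irreducibility of $\Sp(2,1)$ in characteristic $2$, which the defect-zero argument dispatches at once.
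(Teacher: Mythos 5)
Your proof is correct, and it reaches the answer by a slightly different route from the paper. For $F_2(2,1)$ the paper squeezes: it uses the branching theorem to see $\Sp(1,1)$ sits inside $\Res_{\Sym_2}\Sp(2,1)$, giving $F_2(2,1)\geq 1$, and then bounds from above by embedding $\Sp(2,1)$ into the permutation module $M(2,1)$. You instead compute the invariants directly via Frobenius reciprocity as $\dim\Hom_{\Sym_3}(M(2,1),\Sp(2,1))$ and decompose $M(2,1)\cong\Sp(3)\oplus\Sp(2,1)$ using the $2$-core/block data, exactly the template of the paper's Lemma 2.3 in the case $a=1$. The one thing that template does \emph{not} automatically give you at $p=2$ is $\End_{\Sym_3}(\Sp(2,1))=k$, since Lemma 1.3.2(iii) is stated only for $p\geq 3$; you correctly spot this and supply the missing ingredient by observing $(2,1)$ is a $2$-core and hence $\Sp(2,1)$ sits alone in a defect-zero block, so it is simple and Schur's lemma applies. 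Your derivation of $F_2(2,2)=F_2(2,1)$ via the branching isomorphism $\Res_{\Sym_3}\Sp(2,2)\cong\Sp(2,1)$ is the same opening move the paper makes. Overall your argument is cleaner and more self-contained than the paper's, which leans on a somewhat terse dimension count for $M(2,1)^{\Sym_2}$; your route makes the block-theoretic mechanism fully explicit.
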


\begin{proof} Since  $F_2(2,2)=F_2(2,1)$ it is enough to calculate $F_2(2,1)$.  By Lemma 1.3.5, the $\Sym_2$-module $\Sp(1,1)$ is inside $\Sp(2,1)$ so that $F_2(2,1)\neq 0$. By Corollary 1.3.6 we have $F_2(2,1)\leq F_2(2)+F_2(1,1)$ so that $F_2(2,1)\leq 2$. Moreover $\Sp(2,1)$ embeds in the permutation  module $M(2,1)$, which has a one dimension space of $\Sym_2$-invariants, Hence $F_2(2,1)\leq 1$ and so $F_2(2,1)=1$.

\end{proof}

\begin{lemma}
We have $F_2(3,1)=2$ and $F_2(3,2)=F_2(3,3)=3$.
\end{lemma}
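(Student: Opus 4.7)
The plan is to work through the three values in order, pulling the upper bounds from Corollary~1.3.6 and reaching for Remark~1.3.7 only when the removable nodes of the partition share a residue. For $F_2(3,1)$, the two removable nodes $(1,3)$ and $(2,1)$ of $(3,1)$ have residues $2\equiv 0$ and $-1\equiv 1$ modulo $2$, so their residues differ; Corollary~1.3.6 therefore yields the equality $F_2(3,1)=F_2(3)+F_2(2,1)=1+1=2$, where $F_2(3)=1$ is immediate (the trivial Specht module $\Sp(3)$ has one-dimensional $\Sym_2$-invariants) and $F_2(2,1)=1$ by Lemma~5.2.

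For $F_2(3,2)$ the removable nodes $(1,3)$ and $(2,2)$ both have residue $0 \bmod 2$, so Corollary~1.3.6 gives only the upper bound $F_2(3,2)\leq F_2(2,2)+F_2(3,1)=1+2=3$. I would match this with a lower bound via Remark~1.3.7, reducing to the characteristic-zero calculation of $\dim_\que\Sp_\que(3,2)^{\Sym_2}$. Over $\que$ the $\Sym_2\times\Sym_3$-module filtration from Lemma~1.3.9(i) splits (since $|\Sym_2|$ is invertible), so $\Sp_\que(3,2)^{\Sym_2}$ is isomorphic as an $\Sym_3$-module to the skew Specht module $\Sp_\que((3,2)/(2))$ (the $\mu=(2)$ summand, since $\Sp((2))$ is the trivial representation). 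A direct Littlewood--Richardson computation shows $\Sp_\que((3,2)/(2))$ has sections $\Sp(3)$ and $\Sp(2,1)$ of total dimension $1+2=3$, so $F_2(3,2)\geq 3$ and hence $F_2(3,2)=3$.

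For $F_2(3,3)$ I would invoke the Branching Theorem~1.3.5: the partition $(3,3)$ has the unique removable node $(2,3)$, so $\Res_{\Sym_5}\Sp(3,3)=\Sp(3,2)$ as $\Sym_5$-modules. Restricting further to $\Sym_2\subset\Sym_5$ and taking fixed points identifies $\Sp(3,3)^{\Sym_2}$ with $\Sp(3,2)^{\Sym_2}$ as vector spaces, giving $F_2(3,3)=F_2(3,2)=3$. The only step that requires more than routine Corollary~1.3.6 accounting is the lower bound for $F_2(3,2)$; in characteristic~$2$ a direct attempt through the Lemma~1.3.9 filtration of $\Sp(3,2)$ recovers only $F_2(3,2)\geq 2$ (from the bottom section $\Sp(1,1)\otimes\Sp((3,2)/(1,1))$), since one cannot rule out on formal grounds a non-trivial boundary map into $H^1(\Sym_2,\Sp(1,1)\otimes\Sp((3,2)/(1,1)))$ when $p=2$, and the reduction to characteristic zero via Remark~1.3.7 is by far the cleanest way around this obstacle.
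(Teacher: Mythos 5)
Your argument is correct and matches the paper's proof in all essential respects: $F_2(3,1)$ by Corollary~1.3.6 with equality (distinct residues), $F_2(3,2)$ by sandwiching the Corollary~1.3.6 upper bound against the characteristic-zero lower bound from Remark~1.3.7, and $F_2(3,3)=F_2(3,2)$ via the unique removable node of $(3,3)$. The only cosmetic difference is that you isolate the $\mu=(2)$ summand of the $\Sym_2\times\Sym_3$-decomposition of $\Sp_\que(3,2)$ directly, whereas the paper records the full decomposition $\Sp_\que(2)\otimes(\Sp_\que(3)\oplus\Sp_\que(2,1))\oplus\Sp_\que(1,1)\otimes\Sp_\que(2,1)$ before reading off the invariants; these are the same computation.

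Your closing remark is also apt: the filtration of Lemma~1.3.9 in characteristic $2$ only sees the submodule $\Sp(1,1)\otimes\Sp((3,2)/(1,1))$ (which contributes $2$ to the invariants since $\Sp(1,1)$ is trivial when $p=2$) and cannot by itself control the image of the top piece, so the reduction mod $p$ bound of Remark~1.3.7 is genuinely needed here, exactly as the paper uses it.
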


\begin{proof}

 The partition $(3,1)$ has two removable nodes $(1,3)$ and $(2,1)$ with residues $0$ and $1$. Therefore by Corollary 1.3.6 we have $F_2(3,1)=F_2(3)+F_2(2,1)=2$.

\q The partition $(3,2)$ has two removable nodes $(1,3)$ and $(2,2)$. Hence by Corollary 1.3.6  we have  $F_2(3,2)\leq F_2(3,1)+ F_2(2,2)=3$. 

\q We consider now the $\Sym_2$-invariants of the rational Specht module  $\Sp_\que(3,2)$. We have that the Specht module $\Sp_\que (3,2)$ decomposes as 
$$\Sp_\que(3,2)\cong \Sp_\que(2)\otimes (\Sp_\que(3)\oplus\Sp_\que(2,1))\oplus \Sp_\que(1,1)\otimes \Sp_\que(2,1)$$
and so $\Sp_\que(3,2)^{\Sym_2}\cong \Sp_\que(3)\oplus \Sp_\que(2,1)$, and so is three dimensional.  By Remark 1.3.7 we have 
$\F_2(3,2)\geq 3$ and so $F_2(3,2)=3$.

\q Hence we also have $F_3(3,3)=F_3(3,2)=3$.

\end{proof}

\begin{lemma}
 We have $F_2(4,1)=3$, $F_2(4,2)=6$, 
$\F_2(4,3)=9$ and $F_2(4,4)=9$.

\end{lemma}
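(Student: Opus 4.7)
The plan is to establish the four equalities in the order listed, propagating the known values of $F_2(3,a)$, $F_2(4)$ and $F_2(4,1)$ via Corollary 1.3.6, and supplying matching lower bounds via Remark 1.3.7 (reduction from characteristic zero) whenever the removable nodes have repeated residues.

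For $F_2(4,1)$ the two removable nodes $(1,4)$ and $(2,1)$ both have residue $1$ modulo $2$, so Corollary 1.3.6 only yields the upper bound $F_2(4,1)\le F_2(4)+F_2(3,1)=1+2=3$. For the matching lower bound I would invoke Remark 1.3.7. By Lemma 1.3.9(i), applied over $\que$, the restriction of $\Sp_\que(4,1)$ to $\Sym_2\times\Sym_3$ has a (semisimple, hence split) filtration with sections $\Sp_\que(\mu)\otimes \Sp_\que((4,1)/\mu)$ for $\mu\in\{(2),(1,1)\}$. A short application of the Littlewood--Richardson rule (equivalently, Pieri) gives $\Sp_\que((4,1)/(2))\cong \Sp_\que(3)\oplus \Sp_\que(2,1)$ and $\Sp_\que((4,1)/(1,1))\cong \Sp_\que(3)$. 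Taking $\Sym_2$-invariants leaves $\Sp_\que(3)\oplus\Sp_\que(2,1)$, which is $3$-dimensional, so $F_2(4,1)\ge 3$ and hence $F_2(4,1)=3$.

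For $F_2(4,2)$ the removable nodes $(1,4)$ and $(2,2)$ have distinct residues ($1$ and $0$), so Corollary 1.3.6 yields equality $F_2(4,2)=F_2(3,2)+F_2(4,1)=3+3=6$. For $F_2(4,3)$ the removable nodes $(1,4)$ and $(2,3)$ both have residue $1$, so Corollary 1.3.6 only gives the upper bound $F_2(4,3)\le F_2(3,3)+F_2(4,2)=3+6=9$. I would again invoke Remark 1.3.7: the same procedure as above, using Lemma 1.3.9(i) and Pieri's rule to compute $\Sp_\que((4,3)/(2))\cong \Sp_\que(4,1)\oplus \Sp_\que(3,2)$ and $\Sp_\que((4,3)/(1,1))\cong \Sp_\que(3,2)$, shows that $\Sp_\que(4,3)^{\Sym_2}\cong \Sp_\que(4,1)\oplus \Sp_\que(3,2)$, of dimension $4+5=9$. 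Hence $F_2(4,3)\ge 9$ and equality holds. Finally, $(4,4)$ has the unique removable node $(2,4)$, so the distinctness hypothesis of Corollary 1.3.6 is vacuously satisfied and we obtain $F_2(4,4)=F_2(4,3)=9$.

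The only real work lies in the two Littlewood--Richardson computations used to verify the characteristic-zero lower bounds; these are routine but the one step where care is required (one must check that no skew-factor other than those listed appears, in particular that the coefficients $c^{(4,1)}_{(1,1),\nu}$ and $c^{(4,3)}_{(1,1),\nu}$ vanish for $\nu\ne (3),(3,2)$ respectively). Everything else is bookkeeping from Corollary 1.3.6 and the preceding lemmas of this section.
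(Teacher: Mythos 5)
Your proof is correct and follows essentially the same route as the paper: upper bounds for $F_2(4,1)$ and $F_2(4,3)$ from Corollary 1.3.6, matching lower bounds via Remark 1.3.7 and the characteristic-zero $\Sym_2\times\Sym_n$ decompositions $\Sp_\que(4,1)^{\Sym_2}\cong\Sp_\que(3)\oplus\Sp_\que(2,1)$ and $\Sp_\que(4,3)^{\Sym_2}\cong\Sp_\que(4,1)\oplus\Sp_\que(3,2)$, with equality from Corollary 1.3.6 in the $(4,2)$ and $(4,4)$ cases. The only cosmetic difference is that you obtain these decompositions by computing Littlewood--Richardson coefficients via Lemma 1.3.9, whereas the paper simply writes them out as explicit direct sums; the residue bookkeeping explaining when the characteristic-zero detour is forced is likewise the same.
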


\begin{proof}
The partition $(4,1)$ has two removable nodes $(1,4)$ and $(2,1)$.  Hence by Corollary 1.3.6  we have 
$$F_2(4,1)\leq F_2(4)+ F_2(3,1)=3.$$
\q We consider now the $\Sym_2$-invariants of $\Sp_\que(4,1)$. We have that the Specht module $\Sp_\que(4,1)$ decomposes as a $\Sym_2\times \Sym_3$-module as
$$\Sp_\que(4,1)\cong \Sp_\que(2)\otimes (\Sp_\que(3)\oplus\Sp_\que(2,1))\oplus \Sp_\que(1,1)\otimes \Sp_\que(3)$$
and so $\Sp_\que(4,1)^{\Sym_2}\cong \Sp_\que(3)\oplus \Sp_\que(2,1)$. Hence we have 
$$F_2(4,1)\geq \dim \Sp_\que(4,1)^{\Sym_2}=3.$$
and hence, by Remark 1.3.7, we have   $F_2(4,1)=3$.

\q The partition $(4,2)$ has two removable nodes $(1,4)$ and $(2,2)$ with residues $1$ and $0$ respectively. Therefore we have 
 $$F_2(4,2)=F_2(4,1)+F_2(3,2)=6.$$

\q By Corollary 1.3.6 we have 
$$
F_2(4,3)\leq F_2(4,2)+F_2(3,3)=9.
$$ 

\q We consider now the $\Sym_2$-invariants of  the rational Specht module $\Sp_\que(4,3)$. The  Specht module $\Sp_\que(4,3)$ decomposes as a $\Sym_2\times \Sym_5$-module ias 
$$
\Sp_\que(4,3)\cong \Sp_\que(2)\otimes (\Sp_\que(4,1)\oplus\Sp_\que(3,2))\oplus \Sp_\que(1,1)\otimes \Sp_\que(3,2)
$$
and so $\Sp_\que(4,3)^{\Sym_2}\cong \Sp_\que(4,1)\oplus \Sp_\que(3,2)$. Hence we have that 
$F_2(\Sp(4,3))\geq \dim \Sp_\que(4,3)^{\Sym_2}=9$. Therefore $F_2(\Sp(4,3))=9$.

\q Thus we also have $F_2(4,4)=F_2(4,3)=9$.
 
 \end{proof}

 \begin{proposition}
 
 The $\Sym_6$-module $\FF_2(4,4)$ does not have a Specht filtration.

\end{proposition}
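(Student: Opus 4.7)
The plan is to follow the methodology of Section~1.5 for $p=2$, $\lambda=(4,4)$, $m=2$. In characteristic $2$, $\Sym_2$ has a unique simple module, so $b_2=1$ and $\Sp(2)\cong \Sp(1,1)\cong k$. A direct Littlewood--Richardson calculation gives $\Sp((4,4)/(2))\cong \Sp(4,2)$ and $\Sp((4,4)/(1,1))\cong \Sp(3,3)$, so Lemma~1.3.9(i) provides a short exact sequence of $\Sym_2\times\Sym_6$-modules
\[
0\to \Sp(1,1)\otimes \Sp(3,3)\to \Sp(4,4)\to \Sp(2)\otimes \Sp(4,2)\to 0.
\]

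Applying the left exact functor $\FF_2$ and invoking the established value $F_2(4,4)=9$, one extracts an exact sequence of $\Sym_6$-modules
\[
0\to \Sp(3,3)\to \FF_2(4,4)\to A\to 0,
\]
where $A$ is a $4$-dimensional submodule of $\Sp(4,2)$. From James's tables (\cite{James}, Appendix), $\Sp(4,2)$ in characteristic $2$ is indecomposable with composition factors $D^{(6)}, D^{(5,1)}, D^{(4,2)}$ (of dimensions $1, 4, 4$, each with multiplicity one) and head $D^{(4,2)}$; and $\Sp(3,3)$ has composition factors $D^{(5,1)}, D^{(6)}$, each with multiplicity one. Since $D^{(4,2)}$ appears in $\Sp(4,2)$ with multiplicity one as the head of the indecomposable module $\Sp(4,2)$, it cannot also appear as a submodule (this would either force $\Sp(4,2)=D^{(4,2)}$ of dimension four, or a second copy of $D^{(4,2)}$ inside $\rad \Sp(4,2)$). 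Consequently $A\cong D^{(5,1)}$, and the composition factor multiset of $\FF_2(4,4)$ is $\{2\,D^{(5,1)}, D^{(6)}\}$; in particular, $D^{(4,2)}$ does not occur.

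Suppose, for a contradiction, that $\FF_2(4,4)$ admits a Specht filtration. No section can be $\Sp(4,2)$ or $\Sp(2,2,1,1)$, since each has $D^{(4,2)}$ as a composition factor. The Specht modules $\Sp(4,1,1)$ and $\Sp(3,1,1,1)$ have dimension $10>9$, while $\Sp(3,2,1)$ lies in a different block. The remaining Specht modules in the principal block of $\Sym_6$ are of dimension $1$ (namely $\Sp(6)$ and $\Sp(1^6)$, with sole composition factor $D^{(6)}$) or of dimension $5$ (namely $\Sp(5,1), \Sp(3,3), \Sp(2,2,2), \Sp(2,1^4)$, each with composition factors $D^{(5,1)}$ and $D^{(6)}$ occurring once). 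Writing $x$ and $y$ for the number of dimension-$1$ and dimension-$5$ sections in the hypothetical filtration, we obtain $x+5y=9$, $y=2$ (from the $D^{(5,1)}$ multiplicity) and $x+y=1$ (from the $D^{(6)}$ multiplicity); together these force $x=-1$, a contradiction.

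The key obstacle is the identification $A\cong D^{(5,1)}$, which combines the composition factor data for $\Sp(4,2)$ from James's tables with the head-socle observation above. Once this is in hand the final step is a straightforward enumeration of Specht modules in the principal block by dimension and composition factor multiplicity.
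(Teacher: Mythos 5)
There is a genuine error in the composition factor data you cite, and it undermines the argument at the crucial step. From James's tables the composition factors of $\Sp(3,3)$ in characteristic $2$ are $D^{(6)}$ and $D^{(4,2)}$, not $D^{(6)}$ and $D^{(5,1)}$ as you claim. (A quick sanity check: $\Res_{\Sym_5}\Sp(3,3)\cong \Sp(3,2)$ lies in the $\Sym_5$-block with $2$-core $(1)$, whereas $D^{(4,1)}=\Res_{\Sym_5}D^{(5,1)}$ lies in the block with $2$-core $(2,1)$, so $D^{(5,1)}$ cannot be a factor of $\Sp(3,3)$.) The same error infects your list of dimension-$5$ Specht modules: $\Sp(3,3)$ and $\Sp(2,2,2)$ have factors $D^{(6)}, D^{(4,2)}$, while only $\Sp(5,1)$ and $\Sp(2,1^4)$ have $D^{(6)}, D^{(5,1)}$.

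Your identification $A\cong D^{(5,1)}$ is fine, but combined with the corrected factors of $\Sp(3,3)$ the composition factor multiset of $\FF_2(4,4)$ is $\{D^{(6)}, D^{(5,1)}, D^{(4,2)}\}$, each with multiplicity one. In particular $D^{(4,2)}$ \emph{does} occur, so your grounds for excluding $\Sp(4,2)$ and $\Sp(2,2,1,1)$ as Specht sections evaporate, and the concluding system of linear equations is based on false multiplicities. In fact, with the correct data the dimension and composition factor counts single out exactly two candidates, $\FF_2(4,4)=\Sp(4,2)$ or $\FF_2(4,4)=\Sp(2,2,1,1)$, and the real work of the proof is to rule these out. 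The paper does this by observing that $\End_{\Sym_6}(\Sp(4,2))=k$ forces the map $\FF_2(4,4)\to\Sp(4,2)$ to be zero or an isomorphism (both impossible) in the first case, and by a duality argument showing that $D^{(6)}$ would have to appear in the head of $\Sp(4,2)$, contradicting that $\Sp(4,2)$ has simple head $D^{(4,2)}$, in the second. That additional analysis is missing from your proof.
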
 

\begin{proof}
 We consider now the $\Sym_2\times \Sym_6$ filtration 

$$\begin{array}{ccc}
\Sp(2)\otimes \Sp(4,2)\cr
\hrulefill\cr
\Sp(1,1)\otimes \Sp(3,3) \cr

\end{array}$$
 of $\Sp(4,4)$.

Therefore we have a short exact sequence 

$$0\rightarrow \Sp(1,1)\otimes \Sp(3,3)\rightarrow \Sp(4,4)\rightarrow \Sp(2)\otimes \Sp(4,2)\rightarrow 0$$

Applying $\FF_2$ we then get an  exact sequence

$$0\rightarrow \Sp(3,3) \rightarrow \FF_2(4,4)\overset{f}{\rightarrow} \Sp(4,2) \eqno{(\dagger)}.$$

Using James's  tables \cite{James}, p.137,  we have that the composition factors of $\Sp(4,2)$ are
$
D^{(6)}, D^{(5,1)} \hbox{ and } D^{(4,2)}
$
and the composition factors of $\Sp(3,3)$ are
$D^{(6)}$ and $D^{(4,2)}$

\q  By the previous Lemma we have that $F_2(4,4)=9$ and so we get by  $(\dagger)$  that the composition factors of $\FF_2(4,4)$ are  $D^{(6)}, D^{(5,1)}$ and $D^{(4,2)}$ .  Using James's  tables once more, \cite{James},  p.137,  is easy to deduce that if $\FF_2(4,4)$ has a Specht filtration then $ \FF_2(4,4)=\Sp(4,2)$ or  $ \FF_2(4,4)=\Sp(2,2,1,1)$.  We show that both cases are impossible. 

\bs
 
 {\it Case} 1. Assume first that $ \FF_2(4,4)=\Sp(4,2)$. By \cite{James}, Corollary 13.17, we have 
$\End_{\Sym_6}(\Sp(4,2))=k$ and so the map $f$ in   $(\dagger)$  must be an isomorphism or the zero map. But  by the exactness of the sequence above $f$ cannot be an isomorphism. Moreover if $f=0$ then  $\Sp(4,2)\cong\Sp(3,3)$ which is again impossible since $\dim(3,3)=5$ and $\dim(4,2)=9$.

\bs

{\it Case 2.} Assume now that $ \FF_2(4,4)=\Sp(2,2,1,1)$. We have that composition factors of $\Sp(3,3)$ are $D^{(6)}$ and $D^{(4,2)}$. In particular the trivial $\Sym_6$ module $D^{(6)}$ embeds in $\Sp(3,3)$, for otherwise  we would have that $\hd(\Sp(3,3))=D^{(6)}$ and so the trivial $\Sym_5$-module $D^{(5)}$ would  appear in the head  of the Specht module $\Sp(3,2)$. But this is impossible since the partition $(3,2)$ is $2$-regular and so the Specht module $\Sp(3,2)$ has simple head $D^{(3,2)}$. Since $D^{(6)}$ embeds in $\Sp(3,3)$ and $ \FF_2(4,4)=\Sp(2,2,1,1)$ we  have an embedding of $D^{(6)}$ inside $\Sp(2,2,1,1)$. Hence $\Hom_{\Sym_6}(k,\Sp(2,2,1,1))\neq0$. Using now \cite{James},  Theorem 8.15,  we get
$$
\Hom_{\Sym_6}(k,\Sp(2,2,1,1))=\Hom_{\Sym_6}(\Sp(2,2,1,1)^*,k)=\Hom_{\Sym_6}(\Sp(4,2),k)
$$
and so $\Hom_{\Sym_6}(\Sp(4,2),k)\neq0$. But this is impossible since the Specht module $\Sp(4,2)$ has  simple head  $D^{(4,2)}$  by \cite{James}, 12.2 Corollary.  The proof is complete.

\end{proof}

\section*{Acknowledgement}

The second author gratefully acknowledges the financial  support of EPSRC Grant EP/L005328/1.

\bs\bs\bs\bs


\end{document}